\numberwithin{equation}{section}
\newtheorem{thm}{Theorem}[section]
\newtheorem{prop}[thm]{Proposition}
\newtheorem{lem}[thm]{Lemma}
\newtheorem{conj}[thm]{Conjecture}
\newtheorem{dfn}[thm]{Definition}
\newtheorem{remark}[thm]{Remark}
\newtheorem{cor}[thm]{Corollary}
\newlist{steps}{enumerate}{1}
\setlist[steps, 1]{label = Step \arabic*:}
\numberwithin{equation}{section}
\newcommand{\C}{\mathbb{C}}
\newcommand{\F}{\mathbb{F}}
\newcommand{\Q}{\mathbb{Q}}
\newcommand{\Z}{\mathbb{Z}}
\newcommand{\mcO}{\mathcal{O}}
\newcommand{\mfb}{\mathfrak{b}}
\newcommand{\mfm}{\mathfrak{m}}
\newcommand{\mfn}{\mathfrak{n}}
\newcommand{\mfr}{\mathfrak{r}}
\newcommand{\mfp}{\mathfrak{p}}
\newcommand{\mfq}{\mathfrak{q}}
\newcommand{\mfP}{\mathfrak{P}}
\newcommand{\GL}{\mathrm{GL}}
\newcommand{\Gal}{\mathrm{Gal}}
\def\1{1\!\!1}
\title[Generalized Fermat equation of signature $(2p, 2q, r)$]{Effective Generalized Fermat equation of signature $(2p, 2q, r)$ with odd narrow class number}
\author[S. Sahoo]{Satyabrat Sahoo}
\address[S. Sahoo]{Yau Mathematical Sciences Center, Tsinghua University, Beijing 100084, China}
\email{satyabrat.sahoo.94@gmail.com}
\keywords{Diophantine equations, Modularity, Galois representations, Irreducibility, Level lowering, Hilbert modular forms}
\subjclass[2020]{Primary 11D41, 11F80; Secondary  11G05, 11F41}
\date{\today}
\begin{document}
	%\maketitle
	\begin{abstract}
		Fix a rational prime $r \geq 5$. In this article, we study the integer solutions of the generalized Fermat equation of signature $(2p,2q,r)$, namely $x^{2p}+y^{2q}=z^r$, where the primes $p,q \geq 5$ are varying. For each rational prime $r \geq 5$, we first establish a condition on the solutions of the $S$-unit equation over  $\Q(\zeta_r+ \zeta_r^{-1})$ such that there exists a constant $V_{r}>0$ (depending on $r$) for which the equation $x^{2p}+y^{2q}=z^r$ with $p,q \geq V_r$ has no non-trivial primitive integer solutions. Then for each rational prime $r \geq 2$, we prove that every elliptic curve over $\Q(\zeta_r+ \zeta_r^{-1})$ is modular. As an application of this, we prove that the above constant $V_r$ is effectively computable. Finally, we provide a criterion for $r$ such that the equation $x^{2p}+y^{2q}=z^r$ with $p,q \geq V_r$ has no non-trivial primitive integer solutions when the narrow class number of $\Q(\zeta_r+ \zeta_r^{-1})$ is odd. 
		%		Finally, we provide some conditions on $\Q(\zeta_r+ \zeta_r^{-1})$ under which the constant $V_r$ is effectively computable.
	\end{abstract}
	
	\maketitle
	
	%    \tableofcontents
	\section{Introduction}
	The study of Diophantine equations, namely Fermat-type equations over totally real number fields via the modular method is one of the most active and exciting areas in number theory. Wiles's proof of Fermat's Last Theorem using the modular method gave birth a new way to solve Diophantine equations. Since then, remarkable progress has been achieved in the study of generalized Fermat equation,  
	\begin{equation}
		\label{p,q,r}
		Ax^p+By^q=Cz^r, \ p,q,r \in \Z_{\geq 2} \text{ with } \frac{1}{p} +\frac{1}{q}+ \frac{1}{r} <1,
	\end{equation}
	where $A,B,C \in \Z \setminus \{0\}$ are coprime. We say $(p,q,r)$ as the signature of \eqref{p,q,r}. We say a solution $(a,b,c) \in \Z^3$ to the equation \eqref{p,q,r} is non-trivial if $abc \neq 0$, and primitive if $a,b,c$ are pairwise coprime.
	Then, the following conjecture is known for equation~\eqref{p,q,r}. 
	\begin{conj}
		\label{DG conj}
		Fix $ A,B,C \in \Z \setminus \{0\}$ are coprime. Then, for all prime exponents $ p,q,r$ with $\frac{1}{p} +\frac{1}{q}+ \frac{1}{r} <1$, the equation~\eqref{p,q,r} has only finitely many non-trivial primitive integer solutions (here the solutions like $1^p+ 2^3=3^2$ counted only once for all primes $p$).
		%integers $a,b,c \in \Z$ with $\gcd(a,b,c)=1$ and such that $Ax^p+By^q=Cz^r$.
	\end{conj}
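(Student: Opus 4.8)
The plan is to deduce Conjecture~\ref{DG conj} from the $abc$ conjecture; since the latter is the natural (and essentially the only known) route to \emph{uniform} finiteness across all exponents, the argument is conditional. The starting observation is that the hypothesis $\frac1p+\frac1q+\frac1r<1$ is not merely strict but uniformly bounded away from $1$: over all prime triples $(p,q,r)$ the largest admissible value of $\chi \coloneqq \frac1p+\frac1q+\frac1r$ is attained at $(2,3,7)$, giving $\chi \le \frac{41}{42}<1$. This single gap is exactly what upgrades a fixed-signature finiteness statement into a uniform one.

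Next I would set up the $abc$ estimate. Write a primitive non-trivial solution as $u=Ax^p$, $v=By^q$, $w=Cz^r$, so that $u+v=w$ after fixing signs. Since $x,y,z$ are pairwise coprime, $x^p,y^q,z^r$ are pairwise coprime, and any common prime factor of two of $u,v,w$ must divide one of the fixed coefficients $A,B,C$; hence after removing a bounded common factor the triple is coprime and $abc$ applies with a constant enlarged only by a quantity depending on $A,B,C$. Using $\mathrm{rad}(x^p)=\mathrm{rad}(x)\le|x|$ together with pairwise coprimality,
\[
\mathrm{rad}(uvw)\ \le\ \mathrm{rad}(ABC)\,|x|\,|y|\,|z|.
\]
Set $M=\max(|x|^p,|y|^q,|z|^r)$. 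Then $|x|\le M^{1/p}$, $|y|\le M^{1/q}$, $|z|\le M^{1/r}$, so $\mathrm{rad}(uvw)\le \mathrm{rad}(ABC)\,M^{\chi}$, while $M\le\max(|u|,|v|,|w|)$ because $|A|,|B|,|C|\ge1$.

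Combining these with the $abc$ conjecture in the form $\max(|u|,|v|,|w|)\le K_\varepsilon\,\mathrm{rad}(uvw)^{1+\varepsilon}$ gives
\[
M\ \le\ K_\varepsilon\,|ABC|^{1+\varepsilon}\,M^{\chi(1+\varepsilon)}.
\]
Choosing $\varepsilon<\frac1{41}$ forces the exponent $\delta\coloneqq\chi(1+\varepsilon)\le\frac{41}{42}(1+\varepsilon)<1$, whence $M^{1-\delta}\le K_\varepsilon|ABC|^{1+\varepsilon}$ and therefore $M\le C_0(A,B,C,\varepsilon)$, an effective constant independent of the exponents. Consequently every primitive non-trivial solution satisfies $\max(|x^p|,|y^q|,|z^r|)\le C_0$, so the integer values $x^p,y^q,z^r$ lie in a fixed finite set. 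For each base of absolute value $\ge2$ the exponent is then bounded, while bases equal to $\pm1$ generate the families (such as $1^p+2^3=3^2$) that the statement agrees to count once; this yields the asserted finiteness.

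The main obstacle is precisely that the argument is conditional: the $abc$ conjecture is open, and no unconditional proof of uniform finiteness over varying prime exponents is known (the fixed-signature case is Darmon--Granville, but those bounds do not assemble into a uniform one). Within the conditional argument the only genuinely delicate point is the coprimality and coefficient bookkeeping required to apply $abc$ cleanly to $u+v=w$ when $A,B,C\neq1$; this is routine, since the spoiling common factors divide the fixed product $ABC$ and are absorbed into the implied constant.
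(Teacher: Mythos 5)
The statement you are proving is not a theorem of the paper: it is stated there as Conjecture~\ref{DG conj} (the Darmon--Granville conjecture), it carries no proof, and none is known. The only unconditional result the paper records in its direction is \cite[Theorem 2]{DG95}, which gives finiteness for each \emph{fixed} prime triple $(p,q,r)$; since that proof goes through Faltings' theorem applied to a curve depending on $(p,q,r)$, its bounds do not assemble into the uniform statement over all prime exponents, exactly as you observe. Your argument, by contrast, assumes the $abc$ conjecture, and that assumption is the genuine gap: a derivation from an open conjecture does not establish the statement, however cleanly executed. To your credit, you flag this conditionality yourself rather than burying it.

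Within its conditional frame, your reduction is the classical one and is essentially sound. The key uniformity input is correct: over prime triples with $\frac1p+\frac1q+\frac1r<1$ the supremum of $\chi$ is $\frac{41}{42}$, attained at $(2,3,7)$, so any $\varepsilon<\frac{1}{41}$ makes $\chi(1+\varepsilon)<1$ and the inequality $M\le K_\varepsilon\,|ABC|^{1+\varepsilon}M^{\chi(1+\varepsilon)}$ bounds $M$ independently of the exponents. The bookkeeping is also fine: for a primitive solution, any prime dividing two of $Ax^p$, $By^q$, $Cz^r$ must divide $AB$, $BC$ or $CA$ with multiplicity bounded by its multiplicity in $ABC$ (coprimality of $x,y,z$ caps the valuation at the coefficient's), so the gcd one divides out is a divisor of a fixed integer and is absorbed into the constant; $\mathrm{rad}(x^p)=\mathrm{rad}(x)\le|x|$ is legitimate since non-triviality gives $xyz\neq0$; and the $\pm1$ bases are precisely the families the conjecture's counting convention collapses. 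Two small points worth making explicit if you write this up: the $abc$ conjecture is stated for positive coprime triples $a+b=c$, so the sign-fixing step should be spelled out, and the ``solutions counted once'' convention should be invoked exactly where you bound exponents only for bases of absolute value at least $2$. But no repair of these details can close the real gap: absent a proof of $abc$, Conjecture~\ref{DG conj} remains open, which is why the paper proves instead the conditional, signature-specific results of Theorems~\ref{main result1 for (2p,2q,r)} and~\ref{main result2 for (2p,2q,r)}.
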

	In \cite{DG95}, Darmon and Granville proved a partial result towards Conjecture~\ref{DG conj}.
	\begin{thm}{\cite[Theorem 2]{DG95}}
		For fixed coprime integers $ A,B,C \in \Z \setminus \{0\}$ and for fixed primes $ p,q,r$ with $\frac{1}{p} +\frac{1}{q}+ \frac{1}{r} <1$, the equation~\eqref{p,q,r} has only finitely many non-trivial primitive integer solutions.
	\end{thm}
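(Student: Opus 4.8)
The plan is to convert each primitive solution into a rational point on a fixed curve of genus at least two and then invoke Faltings' theorem; the inequality $\frac1p+\frac1q+\frac1r<1$ is precisely the hyperbolicity that produces the large genus.

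First, given a non-trivial primitive solution $(a,b,c)$, I would set $t=\frac{Aa^p}{Cc^r}\in\mbP^1(\Q)$, so that $1-t=\frac{Bb^q}{Cc^r}$; since $abc\neq 0$ we have $t\notin\{0,1,\infty\}$. Recovering $(a,b,c)$ from $t$ amounts to extracting a $p$-th root at $0$, a $q$-th root at $1$, and an $r$-th root at $\infty$. I would therefore build a finite cover $\pi\colon X\to\mbP^1_t$, defined over $\Q$ and branched only above $\{0,1,\infty\}$ with local ramification indices $p$, $q$, $r$ there, so designed that each solution lifts $t$ to a $\barQ$-point of $X$. A Riemann--Hurwitz computation yields $2g_X-2=N\bigl(1-\tfrac1p-\tfrac1q-\tfrac1r\bigr)$ with $N=\deg\pi$; by hyperbolicity the right-hand side is positive, and after replacing $X$ by its Galois closure (or a fibre power) if necessary, one arranges $g_X\geq 2$.

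The arithmetic heart of the argument is to control the field over which the lifted point is defined. The key point is that primitivity forces $t$ to meet the three branch points only to orders divisible by the corresponding ramification index: at a prime $\ell\nmid ABCpqr$ dividing $a$ one has $v_\ell(t)=p\,v_\ell(a)\equiv 0\pmod p$, and similarly at $1$ and $\infty$. Consequently the pullback of $\pi$ along such a $t$ is unramified outside the fixed finite set $S$ of primes dividing $ABCpqr$. The Chevalley--Weil theorem then applies: every such $t$ lifts to a point of $X$ defined over an extension of $\Q$ that is unramified outside $S$ and of degree bounded independently of the solution. Equivalently, the solutions distribute among the $\Q$-rational points of finitely many twists $X_\xi$ of $X$, each still a curve of genus $\geq 2$.

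Finally, Faltings' theorem makes every $X_\xi(\Q)$ finite, and there are only finitely many twists $\xi$ to consider, so altogether there are only finitely many primitive solutions. I expect the main obstacle to be the construction of $\pi$ together with the ramification bookkeeping that feeds Chevalley--Weil -- namely, verifying that the perfect-power structure of a primitive solution cancels the ramification of $\pi$ outside $S$, thereby bounding the field of definition of the lift and hence the set of twists. Once this is established and the genus is seen to exceed one, Faltings' theorem completes the proof.
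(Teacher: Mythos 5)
Your proposal is correct and follows essentially the same route as the proof in \cite{DG95}, which this paper only cites rather than reproves: Darmon and Granville likewise attach to each primitive solution a point on a cover of $\mbP^1$ branched only over $\{0,1,\infty\}$ with ramification indices $(p,q,r)$, use primitivity (the perfect-power structure of the solution) together with the Chevalley--Weil theorem to confine all solutions to the rational points of finitely many twists of a fixed curve of genus at least two, and conclude by Faltings' theorem, the hyperbolicity condition $\frac{1}{p}+\frac{1}{q}+\frac{1}{r}<1$ being exactly what makes the genus exceed one.
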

	Moreover, Conjecture~\ref{DG conj} has been proved for many signatures, including the signatures $(p,p,p), (p,p,2), (p,p,3)$ (cf. \S\ref{literature survey} for more details). 
	In this article, we study the solutions of the generalized Fermat equation of signature $(2p,2q,r)$, i.e.,
	\begin{equation}
		\label{2p,2q,r}
		x^{2p}+y^{2q}=z^r,
	\end{equation} 
	where the primes $p,q \geq 5$ are varying and the prime $r \geq 5$ is fixed. First, we give the literature before stating the main results of this article.
	\subsection{Literature Survey:}
	\label{literature survey}
	Let $r,p$ denote the rational primes and $m$ denotes a positive integer.
	In this subsection, we give a literature survey for the generalized Fermat equation~\eqref{p,q,r} of signature $(p,p,p), (p,p,2), (p,p,3), (p,p,r)$ and $(2p,2q,r)$.
	\subsection*{Generalized Fermat equation of signature $(p,p,p)$:}
	In \cite{W95}, Wiles proved that the Fermat equation $x^p+y^p=z^p$ with  $p \geq 3$ has no non-trivial primitive integer solutions.
	In~\cite{DM97}, Darmon and Merel proved that the equation $x^p+y^p=2z^p$ with $p \geq 3$ has no non-trivial primitive integer solutions. In~\cite{R97}, Ribet proved that the equation $x^p+y^p=2^mz^p$ has no non-trivial primitive integer solutions for $2\leq m <p$. Finally, in \cite{K97}, Kraus studied the integer solutions of the generalized Fermat equation $Ax^p+By^p=Cz^p$ for various choices of non-zero integers $A,B,C$.
	Over totally real number fields $K$, the solutions of $Ax^p+By^p=Cz^p$ with $A,B,C \in \mcO_K\setminus \{0\}$ has been studied by \cite{FS15}, \cite{D16}, \cite{KS23 Diophantine1} and \cite{S24 GFLT}.
	\subsection*{Generalized Fermat equation of signature $(p,p,2)$:}
	In~\cite{DM97}, Darmon and Merel first proved that the equation $x^p+y^p=z^2$ with $p \geq 3$ has no non-trivial primitive integer solutions. In~\cite{I03}, Ivorra studied the integer solutions of the equation $x^p+2^my^p=z^2$ and $x^p+2^my^p=2z^2$ for $p \geq 5$ and $0 \leq m < p$. In ~\cite{S03}, Siksek proved that the only non-trivial primitive integer solutions to the equation $x^p+2^my^p=z^2$ with $p \geq 5$ and $m \geq 2$ are when $m=3,\ x=y=1, \ z=\pm 3$. In \cite{BS04} (respectively, \cite{IK06}), Bennet and Skinner (respectively, Ivorra and Kraus) studied the integer solutions of the generalized Fermat equation $Ax^p+By^p=Cz^2$ for various choices of non-zero integers $A,B,C $. 
	%Recently in \cite{C24}, Cazorla Garc\`ia studied the integer solutions of the equation $Ax^2=q^ky^{2n}+z^n$ with $2|z$, for any fixed positive intgers $A,q,k$ with $A$ is square-free and $q\geq 3$ is a prime. 
	Over totally real number fields $K$, the solutions of $Ax^p+By^p=Cz^2$ with $A,B,C \in \mcO_K\setminus \{0\}$ has been studied by \cite{IKO20}, \cite{M22}, \cite{KS23 Diophantine1} and \cite{KS23 Diophantine2}.
	\subsection*{Generalized Fermat equation of signature $(p,p,3)$:}
	In~\cite{DM97}, Darmon and Merel proved that the equation $x^p+y^p=z^3$ with $p \geq 3$ has no non-trivial primitive integer solutions. In \cite{BVY04}, Bennett et al. studied the integer solutions of the generalized Fermat equation $Ax^p+By^p=Cz^3$, for various choices of non-zero integers $A,B,C $. Over totally real number fields $K$, the solutions of $Ax^p+By^p=Cz^3$ with $A,B,C \in \mcO_K\setminus \{0\}$ has been studied by \cite{IKO23},\cite{M22} and \cite{SK25}.
	
	\subsection*{Generalized Fermat equation of signature $(p,p,r)$:}
	We now give the literature for the generalized Fermat equation \eqref{p,q,r} of signature $(p,p,r)$ with $r \geq 5$. 
	In \cite{CK22}, Chen and Koutsianas proved that the equation $x^p+y^p=z^5$ with $p \geq 3$ has no non-trivial primitive integer solutions $(a,b,c)$ with $2 \nmid ab$ and $5 | ab$ or $2 | ab$ and $5 \nmid ab$. Recently in \cite{Azon25}, Azon proved that the equation $7x^p+2^i5^jy^p=3z^5$ with $i \in \{1,2,3,4\}$ and $j \in \{3,4\}$ (respectively, $7x^p+y^p=3z^5$) with $p >71$ has no non-trivial primitive integer solutions $(a,b,c)$ (respectively, $(a,b,c)$ with $10 | ab$). Finally, in \cite{BCDF17}, Billerey et al. proved that for any fixed regular prime $r \geq 5$, there exists a constant $C_r>0$ such that for primes $p \geq C_r$ the equation $x^{p}+y^{p}=z^{r}$ has non-trivial primitive integer solutions $(a,b,c)$ with $r| ab$ and $2 \nmid ab$.
	
	\subsection*{Generalized Fermat equation of signature $(2p,2q,r)$:}
	In this section, we give the literature for the solutions of the generalized Fermat equation~\eqref{p,q,r} of signature $(2p,2q,r)$ with $r \geq 5$. In \cite{Ben06}, Bennett proved that the equation $x^{2p}+y^{2p}=z^5$ with $p \geq 2$ has no non-trivial primitive integer solutions. In \cite{AS16}, Anni and Siksek proved that the only primitive integer solutions of the equation $x^{2p}+y^{2q}=z^r$ with $r=5,7,11$ (respectively, for $r=13$ with $p,q \neq 7$) are $(\pm1, 0,1)$ and $(0, \pm1, 1)$. Later in \cite{BCDDF23}, Billerey et al. removed the assumption $p,q \neq 7$ for $r=13$ from \cite{AS16} by successfully eliminating the Hilbert modular form occuring in the elimination step i.e., Step 4 of the modular method given in \S\ref{modular method}. In \cite{Mich22}, Michaud-Jacobs proved that the equation $x^{2p}+y^{2q}=z^{17}$ with primes $p,q > 5$ has no non-trivial primitive integer solutions.

	\subsection{Main results}
	\label{notations section for x^r+y^r=dz^p}
	In this subsection, we state the main results of this article.
	More precisely, we prove various results on the effective integer solutions of the generalized Fermat equation~\eqref{2p,2q,r} of signature $(2p,2q,r)$ i.e., $x^{2p}+y^{2q}=z^r$, where the primes $p,q \geq 5$ are varying and the prime $r \geq 5$ is fixed.  
	\begin{dfn}
		A solution $(a, b, c)\in \Z^3$ to the equation \eqref{2p,2q,r} is said to be non-trivial if $abc\neq 0$, and primitive if $a,b,c$ are pairwise coprime. 
		%		$a\mcO_K+b\mcO_K+c\mcO_K=\mcO_K$.
	\end{dfn}  
	
	For any number field $F$, let $\mcO_F$ denote its ring of integers and $P_F$ denote the set of all non-zero prime ideals of $\mcO_F$. For any non-zero integer $m$ and any number field $F$, let 
	$$S_{F,m}:=\{\mfp \in P_F: \ \mfp | m \mcO_F\}.$$
	For any set $S \subseteq P_F$, let $\mcO_{S}:=\{\alpha \in F : v_\mfq(\alpha)\geq 0 \text{ for all } \mfq \in P_F \setminus S\}$ be the ring of $S$-integers in $F$ and $\mcO_{S}^*:=\{\alpha \in F : v_\mfq(\alpha)= 0 \text{ for all } \mfq \in P_F \setminus S\}$ be the units of $\mcO_{S}$ which we refer as $S$-units. 
	For any rational prime $r \geq 5$, let $\zeta_r$ denote a primitive $r$th root of unity.	We now state the first main result of this article. More precisely, we study effective integer solutions of the equation~\eqref{2p,2q,r} with $r \nmid x$.
	\begin{thm}[Main result 1]
		\label{main result1 for (2p,2q,r)}
		Fix $r\geq 5$ a rational prime and let $K:= \Q(\zeta_r+ \zeta_r^{-1})$.
		Suppose, for every solution $(\lambda, \mu)$ to the $S_{K,2}$-unit equation
		\begin{equation}
			\label{S_K-unit solution}
			\lambda+\mu=1, \ \lambda, \mu \in \mcO_{S_{K,2}}^\ast,
		\end{equation}
		there exists a prime $\mfP \in S_{K,2}$ that satisfies
		\begin{equation}
			\label{assumption for main result1}
			\max \left\{|v_\mfP(\lambda)|,|v_\mfP(\mu)| \right\}\leq 4v_\mfP(2).
		\end{equation}
		Then there exists an effectively computable constant $V_{r}>0$ (depending on $r$) such that for primes $p,q>V_r$, the equation $x^{2p}+y^{2q}=z^r$ has no non-trivial primitive integer solution $(a,b,c)$ with $r \nmid a$.
	\end{thm}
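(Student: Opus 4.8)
The plan is to run the modular method over the totally real field $K=\Q(\zeta_r+\zeta_r^{-1})$, using the modularity of all elliptic curves over $K$ (established later in this paper) to make the final elimination step effective. Suppose $(a,b,c)$ is a non-trivial primitive solution with $r\nmid a$, and set $u=a^p$, $v=b^q$, so that $u^2+v^2=c^r$. Factoring in $\Z[i]$ and using primitivity, one writes $u+vi$ as a unit times a controlled power of $1+i$ times $(s+ti)^r$, with $\gcd(s,t)=1$ and $s^2+t^2=c$ up to units. Taking real and imaginary parts, $a^p=\mathrm{Re}((s+ti)^r)$ and $b^q=\mathrm{Im}((s+ti)^r)$, and the key point is that for $r$ odd these homogeneous forms of degree $r$ in $(s,t)$ factor over $K$ as $s$ (resp.\ $t$) times a product of quadratic forms indexed by the real embeddings of $K$. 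From one such factorization I would attach a Frey elliptic curve $E/K$, arranged so that, away from the primes above $2r$, its minimal discriminant is a perfect $p$-th power coming from the factor $a^p$. The hypothesis $r\nmid a$ keeps the reduction type at the unique (totally ramified) prime above $r$ under control.

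Next I would show that the mod $p$ representation $\overline{\rho}_{E,p}\colon \Gal(\overline{K}/K)\to \GL_2(\overline{\F}_p)$ is absolutely irreducible once $p$ exceeds an effective bound depending only on $K$, hence only on $r$; this is the first contribution to $V_r$ and follows from effective irreducibility results for mod $p$ representations of elliptic curves over a fixed totally real field. Since $E$ is modular, it gives rise to a Hilbert modular newform over $K$ of parallel weight $2$. Applying level lowering over totally real fields (in the form of Ribet/Fujiwara--Jarvis--Rajaei) to the irreducible $\overline{\rho}_{E,p}$ then produces a Hilbert newform $\f$ over $K$ of parallel weight $2$ and level $\mfn$ divisible only by the primes above $2r$ --- a level independent of $p$ --- with $\overline{\rho}_{E,p}\cong\overline{\rho}_{\f,\mfP}$ for some prime $\mfP\mid p$ of the coefficient field of $\f$.

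The decisive step is the elimination of the finitely many newforms of this fixed level. Forms whose coefficient field is strictly larger than $\Q$ are discarded once $p$ exceeds an effective bound, by comparing Hecke eigenvalues at a well-chosen auxiliary prime; this comparison is effective precisely because modularity lets us enumerate the space of Hilbert cusp forms of level $\mfn$ explicitly. The surviving forms correspond to elliptic curves $E'/K$ of conductor dividing $\mfn$, and matching $E$ against each such $E'$ via $\overline{\rho}_{E,p}\cong\overline{\rho}_{E',p}$ forces, after reading off valuations at the primes above $2$, a solution $(\lambda,\mu)$ of the $S_{K,2}$-unit equation \eqref{S_K-unit solution}. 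This is exactly where hypothesis \eqref{assumption for main result1} is used: the existence of a prime $\mfP\in S_{K,2}$ with $\max\{|v_\mfP(\lambda)|,|v_\mfP(\mu)|\}\leq 4v_\mfP(2)$ contradicts the divisibility by the large exponent $p$ that the isomorphism of mod $p$ representations imposes on the relevant valuation at $\mfP$. Setting $V_r$ to be the maximum of the irreducibility bound, the Hecke-comparison bound, and the valuation bound arising from this contradiction yields an effectively computable constant with the required property.

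I expect the main obstacle to be this final elimination: controlling the reduction type of $E$ at the primes above $2$ sharply enough to produce a genuine solution of the $S_{K,2}$-unit equation, and verifying that $4v_\mfP(2)$ is the correct numerical threshold --- it should emerge from the interplay of the squares in the exponents $2p,2q$ with the norm-$2$ prime $1+i$ governing the factorization in $\Z[i]$, together with the admissible conductor exponents at $2$. A secondary technical point is ensuring that the $q$-th power structure of $b^q$ and the primitivity of $(a,b,c)$ are strong enough to pin down the parametrization $(s,t)$ and the reduction of $E$ at the remaining primes, which is why the statement requires $q>V_r$ as well.
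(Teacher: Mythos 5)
Your skeleton (Frey curve over $K$ from the $\Z[i]$ descent, effective irreducibility, modularity of every elliptic curve over $K$, level lowering to a fixed level supported at the primes above $2$, elimination via the $S_{K,2}$-unit hypothesis) is the same as the paper's, but the elimination step --- which you yourself flag as the main obstacle --- has genuine gaps, and it is exactly there that the paper's proof does its real work. First, your claim that ``the surviving forms correspond to elliptic curves $E'/K$ of conductor dividing $\mfn$'' is the Eichler--Shimura conjecture over totally real fields, which is not known in general; the paper instead invokes the partial result \cite[Corollary 2.2]{FS15}, whose hypotheses must be verified for the Frey curve: potential multiplicative reduction at a prime $\mfP \mid 2$, $p \mid \#\bar{\rho}_{E,p}(I_\mfP)$, and $p \nmid \mathrm{Norm}(K/\Q)(\mfP) \pm 1$. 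The first two come from the explicit computation $v_\mfP(j_E) = 4(1-pn)v_\mfP(2)$ (so $v_\mfP(j_E) < 0$ and $p \nmid v_\mfP(j_E)$, whence $p \mid \#\bar{\rho}_{E,p}(I_\mfP)$ by Lemma~\ref{criteria for potentially multiplicative reduction}), the third from enlarging $V_r$. You never produce this image-of-inertia input, yet it is indispensable even if one granted Eichler--Shimura here (e.g.\ via the squarefree level $2\mcO_K$), because it is used again downstream.

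Second, your mechanism for producing the $S_{K,2}$-unit solution and the contradiction is not correct. The pair $(\lambda,\mu)$ does not arise by ``reading off valuations at the primes above $2$'' from $\bar{\rho}_{E,p} \cong \bar{\rho}_{E',p}$; it is the Legendre cross-ratio of $E'$, which only exists after one replaces $E'$ by an isogenous curve with full rational $2$-torsion (another Mazur-type enlargement of $V_r$, possible because the Frey curve itself has full $2$-torsion) --- a step entirely missing from your sketch. Then good reduction of $E'$ outside $S_{K,2}$ makes $\lambda$, $\mu = 1-\lambda$ into $S_{K,2}$-units, and $j_{E'} = 2^8(1-\lambda\mu)^3/(\lambda\mu)^2$. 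The final contradiction is a sign contradiction, not the ``divisibility by the large exponent $p$'' you describe: hypothesis \eqref{assumption for main result1}, with $t = \max\{|v_\mfP(\lambda)|, |v_\mfP(\mu)|\} \leq 4v_\mfP(2)$, forces $v_\mfP(j_{E'}) \geq 8v_\mfP(2) - 2t \geq 0$ (this is where the threshold $4v_\mfP(2)$ comes from --- the exponent $8$ and the square in the denominator of $j_{E'}$, not the prime $1+i$ of $\Z[i]$), whereas transferring $p \mid \#\bar{\rho}_{E,p}(I_\mfP)$ through the mod-$p$ isomorphism gives $p \mid \#\bar{\rho}_{E',p}(I_\mfP)$, hence $v_\mfP(j_{E'}) < 0$ for \emph{every} $\mfP \in S_{K,2}$. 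Without the full $2$-torsion step and this inertia transfer, your argument does not close.
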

	We now state the second main result of this article. More precisely, we study the remaining case of the above theorem, i.e., effective integer solutions of the equation~\eqref{2p,2q,r} with $r | x$.
	
	\begin{thm}[Main result 2]
		\label{main result2 for (2p,2q,r)}
		Fix $r\geq 5$ a rational prime and let $K:= \Q(\zeta_r+ \zeta_r^{-1})$.
		Suppose, for every solution $(\lambda, \mu)$ to the $S_{K, 2r}$-unit equation
		\begin{equation}
			\label{S_{K,2r}-unit solution}
			\lambda+\mu=1, \ \lambda, \mu \in \mcO_{S_{K, 2r}}^\ast,
		\end{equation}
		there exists a prime $\mfP \in S_{K,2}$ that satisfies
		\begin{equation}
			\label{assumption for main result2}
			\max \left\{|v_\mfP(\lambda)|,|v_\mfP(\mu)| \right\}\leq 4v_\mfP(2).
		\end{equation}
		Then there exists an effectively computable constant $V_{r}>0$ (depending on $r$) such that for primes $p,q>V_r$, the equation $x^{2p}+y^{2q}=z^r$ has no non-trivial primitive integer solution $(a,b,c)$ with $r |a$.
	\end{thm}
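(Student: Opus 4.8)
The plan is to run the modular method over the totally real field $K = \Q(\zeta_r + \zeta_r^{-1})$, following the strategy developed for signature $(2p,2q,r)$ by Anni--Siksek in~\cite{AS16} and its refinements in~\cite{BCDDF23}, \cite{Mich22}, and adapting it to the case $r \mid a$. Suppose $(a,b,c)$ is a non-trivial primitive integer solution of $x^{2p} + y^{2q} = z^r$ with $r \mid a$. Writing the equation as $(a^p)^2 + (b^q)^2 = c^r$ and factoring the left-hand side over $\Q(i)$, the coprimality of $a^p$ and $b^q$ forces a parametrization whose ingredients lie in $\mcO_K$; from this I would attach a Frey elliptic curve $E/K$ to the solution, with coefficients built out of $\zeta_r + \zeta_r^{-1}$ and out of $a^p, b^q$. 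Because $r \mid a$, the places of $K$ above $r$ enter the conductor of $E$, and this is the structural reason why the hypothesis of this theorem is imposed on the $S_{K,2r}$-unit equation rather than only on the $S_{K,2}$-unit equation as in Main Result~1.

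First I would invoke the modularity theorem stated in the abstract, namely that every elliptic curve over $K = \Q(\zeta_r + \zeta_r^{-1})$ is modular, so that $E$ corresponds to a Hilbert modular newform over $K$. Next I would show that the mod $p$ Galois representation $\overline{\rho}_{E,p} \colon \Gal(\barQ/K) \to \GL_2(\overline{\F}_p)$ is irreducible once $p$ exceeds an effectively computable bound depending only on $r$; the effectivity of this step is precisely what is needed to make the final constant $V_r$ effective. With modularity and irreducibility in hand, level lowering in the Hilbert modular setting (via Fujiwara--Jarvis--Rajaei) shows that $\overline{\rho}_{E,p}$ arises from a Hilbert modular newform $\mathfrak{f}$ of parallel weight $2$ and of some level $\mathfrak{N}$ supported only on the primes of $K$ above $2$ and $r$, with $\mathfrak{N}$ depending on $r$ alone. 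Since there are only finitely many such $\mathfrak{f}$, the problem is reduced to eliminating each of them.

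The elimination step is where I expect the main difficulty, and where the $S$-unit hypothesis enters. For each surviving newform $\mathfrak{f}$, comparing the traces of $\overline{\rho}_{E,p}$ and of the representation attached to $\mathfrak{f}$ at primes of good reduction yields congruences that, for $p$ and $q$ large, can only be satisfied if $\mathfrak{f}$ has an associated elliptic curve (or CM form) over $K$; such a form corresponds to a genuine solution of the associated $(2,2,r)$ equation over $\mcO_{S_{K,2r}}$, hence to a solution $(\lambda,\mu)$ of the $S_{K,2r}$-unit equation $\lambda + \mu = 1$. The assumed inequality $\max\{|v_\mfP(\lambda)|, |v_\mfP(\mu)|\} \le 4 v_\mfP(2)$ at some $\mfP \in S_{K,2}$ is then combined with the observation that, at primes above $2$, the valuations of $\lambda$ and $\mu$ coming from the $p$-th and $q$-th power structure of $a^p$ and $b^q$ are forced to be divisible by $p$ or $q$; for $p,q > 4 v_\mfP(2)$ this divisibility together with the bound forces those valuations to vanish, collapsing the putative solution to a trivial one and producing the desired contradiction. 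The hardest part is making every bound in these three steps—irreducibility, level lowering, and the final valuation comparison—simultaneously effective and uniform in $p,q$, so that they coalesce into a single effectively computable $V_r$.
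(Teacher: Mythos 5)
Your outline follows the paper's skeleton through Steps 1--3 (Frey curve over $K$, modularity of all elliptic curves over $K$, effective irreducibility, Fujiwara--Jarvis--Rajaei level lowering to a newform of level $2\mcO_K\mfr$), but the elimination step --- the only place where the $S_{K,2r}$-unit hypothesis enters --- is where your argument has a genuine gap, in two respects. First, the passage from the surviving newform to a solution of the $S_{K,2r}$-unit equation cannot be done by ``trace congruences forcing an associated elliptic curve'': the paper's route (Theorem~\ref{auxilary result for main result1}) is to enlarge $V_r$ so that the eigenvalue field is $\Q$ (Mazur's argument, \cite[Proposition 15.4.2]{C07}), then verify the hypotheses of the Eichler--Shimura criterion of Freitas--Siksek (Theorem~\ref{FS partial result of E-S conj}) --- namely that the Frey curve has potential multiplicative reduction at $\mfP \in S_{K,2}$ with $p \mid \#\bar{\rho}_{E,p}(I_\mfP)$ (Lemma~\ref{reduction on T and S}) and $p \nmid \mathrm{Norm}(K/\Q)(\mfP)\pm 1$ --- to obtain an honest elliptic curve $E'/K$ of conductor $2\mcO_K\mfr$, which after an isogeny may be assumed to have full $2$-torsion; its Legendre invariant $\lambda$ together with $\mu=1-\lambda$ is the required solution of \eqref{S_{K,2r}-unit solution}.

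Second, and more seriously, your contradiction rests on the claim that $v_\mfP(\lambda)$, $v_\mfP(\mu)$ ``are forced to be divisible by $p$ or $q$'' by the power structure of $a^p$, $b^q$. This is false: $(\lambda,\mu)$ is attached to $E'$, equivalently to a newform of level $2\mcO_K\mfr$, an object depending only on $r$ and not on the putative solution $(a,b,c)$, so no divisibility by $p$ or $q$ is available (and if it were, the hypothesis bound $4v_\mfP(2)$ would be pointless). The curve whose $j$-invariant does involve $p$-th powers is the Frey curve $E$ itself, but $E$ has multiplicative reduction at every prime dividing $a_0\beta_j\beta_k$, so its Legendre invariant solves an $S$-unit equation for a set $S$ depending on the solution --- exactly what level lowering is there to eliminate; the two curves cannot be mixed. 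The correct contradiction is a valuation computation on $j_{E'}$: since $\bar{\rho}_{E,p}\sim\bar{\rho}_{E',p}$ and $p \mid \#\bar{\rho}_{E,p}(I_\mfP)$, Lemma~\ref{criteria for potentially multiplicative reduction} forces $v_\mfP(j_{E'})<0$; on the other hand, writing $j_{E'} = 2^8(1-\lambda\mu)^3/(\lambda\mu)^2$ and using $\max \left\{|v_\mfP(\lambda)|,|v_\mfP(\mu)| \right\}\leq 4v_\mfP(2)$, one checks in each of the three possible valuation configurations of $(\lambda,\mu)$ that $v_\mfP(j_{E'}) \geq 8 v_\mfP(2) - 2\max \left\{|v_\mfP(\lambda)|,|v_\mfP(\mu)| \right\} \geq 0$, a contradiction. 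Without this replacement your argument does not close.
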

	\begin{remark}
		The main reason for distinguishing two separate cases i.e., $r \nmid a$ in Theorem~\ref{main result1 for (2p,2q,r)} and $r|a$ in Theorem~\ref{main result2 for (2p,2q,r)}, lies in the fact that the associated Frey elliptic curves differ in these two cases (cf. \eqref{Frey curve for r not divide x} and \eqref{Frey curve for r divide x} for Frey curves). Moreover, in Theorem~\ref{main result1 for (2p,2q,r)}, we require a condition on the solutions of the $S_{K, 2}$-unit equation~\eqref{S_K-unit solution} whereas in Theorem~\ref{main result2 for (2p,2q,r)}, we require a condition on the solutions of $S_{K, 2r}$-unit equation~\ref{S_{K,2r}-unit solution}. This distinction arises from the different reduction properties of the Frey curves: specifically, the Frey curve $E/K$ with $r \nmid a$ has good reduction at the unique prime $\mfr$ of $\mcO_K$ lying above $r$, while the Frey curve $E/K$ with $r|a $ has multiplicative reduction at $\mfr$ (cf. Theorem~\ref{semi stable red of Frey curve}).
	\end{remark}
	\begin{remark}
		\label{remark on S unit eqn}
		Note that in Theorem~\ref{main result1 for (2p,2q,r)} (respectively,  in Theorem~\ref{main result2 for (2p,2q,r)}), we deduce a Diophantine problem to a computational problem involving only the solutions of the $S$-unit equation. Recently, significant progress has been made in the study of the solutions of the $S$-unit equation. More precisely, for any number field $F$ and any finite set $S \subseteq P_F$, the $S$-unit equation $\lambda+\mu=1, \ \lambda, \mu \in \mcO_{S}^\ast$ has only a finite number of solutions (cf. \cite{S14} for more details). Furthermore, the solutions of the $S$-unit equation are effectively computable (cf. \cite{AKMRVW21} for more details).
		%		A similar bound on $S_{K^+,2rd}$-unit equation~\eqref{S_K-unit solution} can also be found in \cite[Theorem 3]{FS15} (respectively \cite[Theorem 4]{M23}) but they have considered  $S_{K,2}$-unit equation (respectively $S_{K^+,2r}$-unit equation) instead of $S_{K^+,2rd}$-unit equation.
	\end{remark}
	%\begin{cor}
	%	\label{effective cor}
	%	Let $r\geq 5$ be a rational prime and $V_r$ be the same constant as in Theorem~\ref{main result1 for (2p,2q,r)} (respectively, Theorem~\ref{main result2 for (2p,2q,r)}). Then $V_r$ is effectively computable.
	%\end{cor}
	\begin{remark}
		\label{comparision}
		Note that \cite{AS16} (respectively, \cite{Mich22}) studied the integer solutions of the equation $x^{2p}+y^{2q}=z^r$ for $r=3,5,7,11,13$ (respectively, $r=17$). However, for each rational prime $r \geq 5$, we establish a condition on the solutions of the $S$-unit equation over the field $\Q(\zeta_r+ \zeta_r^{-1})$ for which there exists an effectively computable constant $V_{r}>0$ such that the equation $x^{2p}+y^{2q}=z^r$ with primes $p,q>V_r$ has no non-trivial primitive integer solution (cf. Theorems~\ref{main result1 for (2p,2q,r)} and \ref{main result2 for (2p,2q,r)}). By Remark~\ref{remark on S unit eqn}, we can compute all the primes $r$ satisfying the condition on the $S$-unit equation. Moreover, for each rational prime $r \geq 5$, in Propositions~\ref{loc crit1 for main result1} and \ref{loc crit1 for main result2}, we give a conditional criterion for the field  $\Q(\zeta_r+ \zeta_r^{-1})$ satisfying the above condition on the $S$-unit equation and explicitly compute all the primes $r \leq 200$ satisfying Propositions~\ref{loc crit1 for main result1} and \ref{loc crit1 for main result2} (cf. Corollary~\ref{cor1 to loc crit1 for main result2})
	\end{remark}
	
	For any number field $F$, let $h_F^+$ denote the narrow class number of $F$. We now establish the necessary conditions on the prime $r$ for which Theorem~\ref{main result1 for (2p,2q,r)} holds. More precisely,
	% Next, we show that if $2$ is inert in $\Q(\zeta_r+ \zeta_r^{-1})$ and $2 \nmid h_{\Q(\zeta_r+ \zeta_r^{-1})}^+$, then Theorem~\ref{main result1 for (2p,2q,r)} is true.
	%First, we recall the definition of safe primes.
	%\begin{dfn}
	%	We say an odd prime $r \geq 5$ is a safe prime if $\frac{r-1}{2}$ is also a prime.
	%\end{dfn}
	%In the following result, we show that if $r\geq5$ is a safe prime then the conclusion of Theorem~\ref{main result1 for (2p,2q,r)} is true. 
	%More precisely,
	\begin{prop}
		\label{loc crit1 for main result1}
		Let $r \geq 5$ be a prime and $K:= \Q(\zeta_r+ \zeta_r^{-1})$. Assume that $2$ is inert in $K$ and $2 \nmid h_K^+$. Then there is an effectively computable constant $V_{r}>0$ (depending on $r$) such that for primes $p,q>V_r$, the equation $x^{2p}+y^{2q}=z^r$ has no non-trivial primitive integer solution $(a,b,c)$ with $r \nmid a$.
	\end{prop}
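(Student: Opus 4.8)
The plan is to derive Proposition~\ref{loc crit1 for main result1} directly from Theorem~\ref{main result1 for (2p,2q,r)} by checking that the two hypotheses, $2$ inert and $2\nmid h_K^+$, force the condition~\eqref{assumption for main result1} on the $S_{K,2}$-unit equation~\eqref{S_K-unit solution} assumed there. Since $2$ is inert, $S_{K,2}=\{\mfP\}$ is a single prime with $\mfP=2\mcO_K$ principal and $v_\mfP(2)=1$, so $\mcO_{S_{K,2}}^\ast=\{\pm 2^{k}u : k\in\Z,\ u\in\mcO_K^\ast\}$ and the required inequality reads $\max\{|v_\mfP(\lambda)|,|v_\mfP(\mu)|\}\le 4$. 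Thus I must prove that \emph{every} solution of $\lambda+\mu=1$ with $\lambda,\mu\in\mcO_{S_{K,2}}^\ast$ satisfies this bound. Applying the order-$6$ action $(\lambda,\mu)\mapsto(\mu,\lambda),(1/\lambda,-\mu/\lambda),\dots$, which permutes solutions and preserves $\max\{|v_\mfP(\lambda)|,|v_\mfP(\mu)|\}$, I may normalise the valuation pattern $(v_\mfP(\lambda),v_\mfP(\mu))$ to $(c,0)$ with $c=\max\ge 0$; the goal becomes $c\le 4$. In this normalisation $\mu=1-\lambda$ is an $S_{K,2}$-unit with $v_\mfP(\mu)=0$, hence $\mu\in\mcO_K^\ast$, while $\lambda\equiv 0\pmod{\mfP^{c}}$ forces $\mu\equiv 1\pmod{\mfP^{c}}$.

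The key point, and the place where $2\nmid h_K^+$ enters, is the following. If $c\ge 3$ then $\mu\in 1+\mfP^{c}\subseteq 1+\mfP^{3}$; since $v_\mfP(2)=1$, Hensel's lemma gives $\mu\in(K_\mfP^\ast)^2$, i.e. $\mu$ is a square in the completion at the unique prime above $2$. As $\mu$ is a global unit, the quadratic extension $K(\sqrt{\mu})/K$ is then unramified at every finite prime: it is unramified away from $2$ because $\mu$ is a unit, and split (hence unramified) at $\mfP$ because $\mu$ is a local square there. Therefore $K(\sqrt{\mu})$ lies in the narrow Hilbert class field of $K$, whose degree over $K$ is $h_K^+$. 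Since $2\nmid h_K^+$, this forces $K(\sqrt{\mu})=K$, i.e. $\mu=v^2$ for some $v\in\mcO_K^\ast$.

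With $\mu=v^{2}$ in hand I would run a $2$-adic descent. Writing $\lambda=1-v^{2}=(1-v)(1+v)$, both factors are $S_{K,2}$-units whose $\mfP$-valuations sum to $c$; after replacing $v$ by $-v$ if necessary we may assume $v\equiv 1\pmod{\mfP^{c-1}}$, and a short characteristic-$2$ computation then gives $v_\mfP(1+v)=1$ and $v_\mfP(1-v)=c-1$. Dividing $(1-v)+(1+v)=2$ by $2$ produces a new solution $\bigl(\tfrac{1-v}{2},\tfrac{1+v}{2}\bigr)$ of the $S_{K,2}$-unit equation whose maximal valuation is $c-2=c-2v_\mfP(2)$. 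Feeding this into a minimal-counterexample (or induction on $c$) argument reduces the problem to finitely many boundary values of $c$, which I would eliminate using the finite generation of $\mcO_K^\ast$—no non-trivial unit is infinitely $2$-divisible—together with the iterated square-extractions forced by the previous paragraph.

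The main obstacle is this last step: pinning the bound to exactly $4v_\mfP(2)$ and clearing the boundary cases requires a careful analysis of the reduction of the Legendre curve $E_\lambda\colon Y^2=X(X-1)(X-\lambda)$ at the prime $\mfP\mid 2$, where the residue characteristic is $2$ and the ramification is wild. Equivalently, one must control the quantity $v_\mfP(j(E_\lambda))=8v_\mfP(2)-2\max\{|v_\mfP(\lambda)|,|v_\mfP(\mu)|\}$ and rule out potentially multiplicative reduction of $E_\lambda$ at $\mfP$ by means of $2\nmid h_K^+$. This characteristic-$2$ local computation, rather than the global class-field input in the second paragraph, is where I expect the real difficulty to lie.
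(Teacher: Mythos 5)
Your first half is sound and essentially coincides with the paper's proof: reducing the Proposition to verifying hypothesis~\eqref{assumption for main result1} of Theorem~\ref{main result1 for (2p,2q,r)}, normalising a putative bad solution to valuation pattern $(c,0)$ with $c\geq 5$, and using $2\nmid h_K^+$ to show the unit $\mu$ is a square in $\mcO_K$ (the paper gets the everywhere-finitely-unramified quadratic extension from the explicit integral polynomial $x^2-x+\tfrac{\mu-1}{4}$ rather than from Hensel's lemma at $\mfP$, but that difference is cosmetic). The genuine gap is in your final step: your descent runs in the wrong direction to ever produce a contradiction. From a solution with valuation $c\geq 5$ you build the solution $\bigl(\tfrac{1-\nu}{2},\tfrac{1+\nu}{2}\bigr)$ of valuation $c-2$; but a solution of \emph{smaller} valuation is not a counterexample to anything — solutions with $c\in\{1,2\}$, such as $(2,-1)$, always exist — so a minimal-counterexample or induction argument has nothing to bite on: a minimal counterexample $c\in\{5,6\}$ descends to $c-2\in\{3,4\}$, which the bound permits, and the process simply stops. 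Your proposed patches do not close this: the descent terminates after finitely many steps, so "no unit is infinitely $2$-divisible" never comes into play; and no characteristic-$2$ analysis of the Legendre curve at $\mfP$ is needed anywhere in this Proposition — that work is already encapsulated in Theorem~\ref{main result1 for (2p,2q,r)} (via Theorem~\ref{auxilary result for main result1}), whose $S$-unit hypothesis is the only thing left to verify.

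What the paper does instead is an \emph{ascent}. Because the $S_{K,2}$-unit equation~\eqref{S_K-unit solution} has only finitely many solutions, one may choose a bad solution maximising $v_\mfP(\lambda)=c\geq 5$. Writing $\mu=\nu^2$ and $\lambda=(1-\nu)(1+\nu)$, exactly one of the factors has valuation $1$ (since their sum is $2$ and $v_\mfP(2)=1$) and the other has valuation $c-1$; then the pair $\lambda''=-\tfrac{(1-\nu)^2}{4\nu}$, $\mu''=\tfrac{(1+\nu)^2}{4\nu}$ (swapped if necessary) is again a solution of~\eqref{S_K-unit solution}, with valuation $2(c-1)-2=2c-4>c$ precisely because $c\geq 5$, contradicting maximality. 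This squaring construction — not the halving one you propose — is also what explains the threshold $4v_\mfP(2)$: the value $c=4$ is exactly the fixed point of the map $c\mapsto 2c-4$, below which the ascent no longer increases the valuation. To repair your argument, replace your descent step by this ascent (or otherwise invert your construction), keeping your class-field-theoretic square-extraction as is.
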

	%The first few examples of safe primes are $r= 5, 7, 11, 23, 47, 59, 83, 107, 167, 179, 227, 263,\\ 347,
	% 359, 383, 467, 479, 503, 563, 587, 719, 839, 863, 887, 983, 1019, 1187, 1283,1307,1319, 1367,\\
	% 1439, 1487, 1523, 1619, 1823, 1907$. Moreover, it has been conjectured that there are infinitely many safe primes. We also give some different criteria or values of $r$ apart from the safe primes for which Theorem~\ref{main result1 for (2p,2q,r)} is true (cf. Remark~\ref{Remaerk for loc crit main result1} for more details).
	
	We now give the necessary conditions on the prime $r$ for which  Theorem~\ref{main result2 for (2p,2q,r)} holds. 
	\begin{prop}
		\label{loc crit1 for main result2}
		Fix $r\geq 5$ a rational prime with $r \not\equiv 1 \pmod 8$ and let $K:= \Q(\zeta_r+ \zeta_r^{-1})$. Assume that $2$ is inert in $K$ and $2 \nmid h_K^+$. Then there is an effectively computable constant $V_{r}>0$ (depending on $r$) such that for primes $p,q>V_r$, the equation $x^{2p}+y^{2q}=z^r$ has no non-trivial primitive integer solution $(a,b,c)$ with $r |a$.
	\end{prop}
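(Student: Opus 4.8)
The plan is to deduce Proposition~\ref{loc crit1 for main result2} from Theorem~\ref{main result2 for (2p,2q,r)} by verifying, under the stated hypotheses, that every solution of the $S_{K,2r}$-unit equation~\eqref{S_{K,2r}-unit solution} satisfies inequality~\eqref{assumption for main result2}. First I would unwind the local set-up. Since $2$ is inert in $K$, the set $S_{K,2}$ consists of the single prime $\mfP=2\mcO_K$, and $v_\mfP(2)=1$; hence the target inequality becomes $\max\{|v_\mfP(\lambda)|,|v_\mfP(\mu)|\}\leq 4$. As $r$ is totally ramified in $K$ with $r\mcO_K=\mfr^{(r-1)/2}$, there is a unique prime $\mfr$ above $r$, so $S_{K,2r}=\{\mfP,\mfr\}$ and every solution has $(\lambda)=\mfP^{a}\mfr^{c}$, $(\mu)=\mfP^{b}\mfr^{d}$ as fractional ideals for integers $a,b,c,d$.

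Next I would reduce to a single configuration using the standard $S_3$-action on solutions of $\lambda+\mu=1$, generated by $\lambda\mapsto 1-\lambda$ and $\lambda\mapsto 1/\lambda$, which permutes the finite solution set. The ultrametric inequality at $\mfP$ applied to $\lambda+\mu=1$ forces $\min\{a,b\}=0$ or $a=b<0$, and in the second case the substitution $\lambda\mapsto 1/\lambda$ returns us to the first; on the whole solution set the quantity $\max\{|v_\mfP(\lambda)|,|v_\mfP(\mu)|\}$ is preserved by these moves. Thus it suffices to treat $v_\mfP(\lambda)=0\le v_\mfP(\mu)=b$ and to prove $b\le 4$, i.e. to bound $v_\mfP(1-\lambda)$ for an $S_{K,2r}$-unit $\lambda$ with $v_\mfP(\lambda)=0$.

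The heart of the argument, and the step I expect to be the main obstacle, is this bound on $v_\mfP(1-\lambda)$, which is where the hypothesis $2\nmid h_K^+$ enters. The idea is to attach to the solution the Legendre curve $W_\lambda\colon Y^2=X(X-1)(X-\lambda)$, which carries full $2$-torsion over $K$ and has good reduction outside $\{\mfP,\mfr\}$, and to read~\eqref{assumption for main result2} as a bound on its reduction at $\mfP$. Since $h_K^+$ is odd, the class of $\mfr$ in the narrow class group has odd order $m$, so $\mfr^{m}=(\pi)$ for a totally positive $\pi$; this lets me normalise $\lambda$ as a global unit times explicit powers of $\pi$ and of $2$, reducing matters to the behaviour of $\mcO_K^\ast$ modulo powers of $\mfP$. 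The oddness of $h_K^+$ should then force the obstruction to a quadratic twist of $W_\lambda$ with good reduction away from $\mfP$ to vanish (the Freitas--Siksek mechanism), which controls $v_\mfP(\lambda)$ and $v_\mfP(1-\lambda)$ modulo $2$ and yields $b\le 4v_\mfP(2)=4$.

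Finally, the extra hypothesis $r\not\equiv 1\pmod 8$ is needed precisely because $\mfr$ now lies in the relevant set $S_{K,2r}$, in contrast with Proposition~\ref{loc crit1 for main result1} where $S=S_{K,2}$ and the $\mfr$-direction is absent. I would use it to control the $\mfr$-contribution of a solution, ruling out that a large $v_\mfr$-part conspires with the equation modulo powers of $\mfr$ to produce a solution violating the $\mfP$-bound; concretely it supplies the quadratic residue information at $\mfr$ guaranteeing that the twist above introduces no ramification at $\mfr$. With~\eqref{assumption for main result2} verified for every solution, Theorem~\ref{main result2 for (2p,2q,r)} applies and produces the effectively computable constant $V_r$, completing the proof.
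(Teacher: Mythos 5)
Your overall reduction is the same as the paper's: Proposition~\ref{loc crit1 for main result2} is deduced from Theorem~\ref{main result2 for (2p,2q,r)} by checking that, under the hypotheses, every solution of the $S_{K,2r}$-unit equation~\eqref{S_{K,2r}-unit solution} satisfies~\eqref{assumption for main result2}; your normalization via the ultrametric inequality and the standard action on solutions of $\lambda+\mu=1$ is also fine. But the heart of the matter --- actually proving $\max\{|v_\mfP(\lambda)|,|v_\mfP(\mu)|\}\leq 4$ --- is missing from your proposal, and what you sketch in its place would not work. The paper disposes of this step by citing Mocanu's arguments (\cite[Theorem 6, Corollary 7]{M23}), which run exactly parallel to the paper's own proof of Proposition~\ref{loc crit1 for main result1}: since the $S_{K,2r}$-unit equation has finitely many solutions, pick one maximizing $v_\mfP(\lambda')$ and suppose this maximum is $\geq 5$; then $\mu'=1-\lambda'\equiv 1 \pmod{\mfP^5}$, and the hypothesis $2\nmid h_K^+$ (no quadratic extension of $K$ unramified at all finite places) forces $\mu'$ to be a square --- in the $S_{K,2r}$ setting one must first rule out odd $v_\mfr(\mu')$, since then $K(\sqrt{\mu'})$ would be ramified at $\mfr$, and this is where the congruence condition on $r$ enters (the only quadratic extension of $K$ ramified at $\mfr$ alone is $\Q(\zeta_r)$, and one compares local behaviour at $\mfP$) --- and finally, writing $\mu'=\nu^2$, the explicit new solution $\lambda''=\pm(1\mp\nu)^2/(4\nu)$, $\mu''=1-\lambda''$ has $v_\mfP(\lambda'')=2v_\mfP(\lambda')-4>v_\mfP(\lambda')$, contradicting maximality.

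Your proposal contains none of this machinery: you never invoke finiteness of the solution set to choose an extremal solution, so there is nothing to contradict; and your substitute mechanism --- that oddness of $h_K^+$ kills "the obstruction to a quadratic twist of $W_\lambda$ with good reduction away from $\mfP$", thereby controlling $v_\mfP(\lambda)$ and $v_\mfP(1-\lambda)$ \emph{modulo} $2$ --- cannot possibly yield the inequality $b\leq 4$, since parity information places no upper bound on an integer. Likewise, your account of the role of $r\not\equiv 1\pmod 8$ ("quadratic residue information at $\mfr$ guaranteeing the twist introduces no ramification at $\mfr$") names the right phenomenon (the possible $\mfr$-ramification of $K(\sqrt{\mu'})$ when $v_\mfr(\mu')$ is odd) but gives no argument deriving a contradiction from it. So while your first paragraph matches the paper's reduction, the proof of the $S$-unit bound --- which is the entire content of the proposition beyond Theorem~\ref{main result2 for (2p,2q,r)} --- is a genuine gap: to close it you would need to reproduce the descent-by-maximality argument of Proposition~\ref{loc crit1 for main result1} together with the additional class-field-theoretic analysis at $\mfr$ carried out in \cite[pp. 13--14]{M23}.
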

	\begin{remark}
		In contrast to Proposition~\ref{loc crit1 for main result1}, we need an extra condition in Proposition~\ref{loc crit1 for main result2}, i.e.,  $r \not\equiv 1 \pmod 8$. This is because of the condition on the solutions $S_{K, 2r}$-unit equation in Theorem~\ref{main result2 for (2p,2q,r)} instead of $S_{K, 2}$-unit equation in Theorem~\ref{main result1 for (2p,2q,r)}.
	\end{remark}
	As an immediate corollary to Propositions~\ref{loc crit1 for main result1} and \ref{loc crit1 for main result2}, we compute all rational primes $r$ with $r \leq 200$ satisfying Proposition~\ref{loc crit1 for main result1},~\ref{loc crit1 for main result2} (cf. \cite{LMFDB25} and \cite{PARI25} for the computations).
	\begin{cor}
		\label{cor1 to loc crit1 for main result2}
		Let $r \in \{5,7,11,13, 19, 23, 37, 47, 53, 59, 61, 67, 71, 79, 83, 101, 103, 107,\\ 131, 139, 149, 163, 167, 173, 179, 181, 191, 197, 199\}$.  
		Then there exists an effectively computable constant $V_{r}>0$ (depending on $r$) such that for primes $p,q>V_r$, the equation $x^{2p}+y^{2q}=z^r$ has no non-trivial primitive integer solution.
	\end{cor}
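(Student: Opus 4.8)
The plan is to deduce Corollary~\ref{cor1 to loc crit1 for main result2} by combining Propositions~\ref{loc crit1 for main result1} and \ref{loc crit1 for main result2} and then verifying their hypotheses for each prime $r$ in the stated list. First I would observe that for any non-trivial primitive integer solution $(a,b,c)$ of $x^{2p}+y^{2q}=z^r$, exactly one of the two alternatives $r \nmid a$ or $r \mid a$ holds, so the two propositions together exhaust every possible solution. Consequently, if a prime $r$ satisfies the hypotheses of \emph{both} propositions---namely $r \not\equiv 1 \pmod 8$, the rational prime $2$ is inert in $K = \Q(\zeta_r+\zeta_r^{-1})$, and $2 \nmid h_K^+$---then taking $V_r$ to be the maximum of the two effectively computable constants furnished by Propositions~\ref{loc crit1 for main result1} and \ref{loc crit1 for main result2} yields an effectively computable constant for which $x^{2p}+y^{2q}=z^r$ has no non-trivial primitive integer solution whenever $p,q > V_r$.

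The remaining task is to isolate which primes $r \leq 200$ meet these three conditions. The congruence $r \not\equiv 1 \pmod 8$ is checked directly. For the inertness of $2$, I would recast the condition in terms of the order of $2$ modulo $r$: since $\Gal(K/\Q) \cong (\Z/r\Z)^\ast/\{\pm 1\}$ and the Frobenius at $2$ is the image of $2$ in this group, the prime $2$ is inert in $K$ precisely when the image of $2$ generates $(\Z/r\Z)^\ast/\{\pm 1\}$, equivalently when the least positive integer $k$ with $2^k \equiv \pm 1 \pmod r$ equals $(r-1)/2$. This is a finite, elementary computation for each individual $r$. The condition $2 \nmid h_K^+$ on the narrow class number must then be verified; I would read these values off the tables in \cite{LMFDB25}, cross-checking against direct computation in \cite{PARI25}, and retain exactly those $r \leq 200$ for which all three conditions hold, which produces the list in the statement.

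The hard part will be this last step. The narrow class number $h_K^+$ of the totally real field $K$, whose degree $(r-1)/2$ grows with $r$, is not governed by any elementary formula and requires either a database lookup or a genuine class-group (and signature/unit) computation; it is precisely the failure of either the inertness of $2$ or the oddness of $h_K^+$ that accounts for the primes $r \leq 200$ absent from the list. Thus the mathematical content of the corollary is entirely the logical combination of the two propositions, while its concrete assertion rests on these arithmetic verifications, which I would carry out and tabulate to complete the proof.
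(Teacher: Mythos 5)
Your proposal is correct and follows essentially the same route as the paper: the corollary is obtained by combining Propositions~\ref{loc crit1 for main result1} and \ref{loc crit1 for main result2} (which together cover the exhaustive cases $r \nmid a$ and $r \mid a$), taking $V_r$ to be the maximum of the two effective constants, and verifying the hypotheses ($r \not\equiv 1 \pmod 8$, $2$ inert in $\Q(\zeta_r+\zeta_r^{-1})$, and $2 \nmid h_K^+$) computationally via \cite{LMFDB25} and \cite{PARI25}. Your additional observations---the recasting of inertness via the order of $2$ in $(\Z/r\Z)^\ast/\{\pm 1\}$---are correct refinements of the same computation the paper performs.
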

	%\begin{remark}
	%	\label{Remaerk for loc crit main result1}
	%	Let $r, K$ be as in Proposition~\ref{loc crit1 for main result2} or Corollary~\ref{cor1 to loc crit1 for main result2}. Since $S_{K,2} \subset S_{K,2r} $, it follows that $\mcO_{S_{K, 2}} \subset \mcO_{S_{K, 2r}}$ and $\mcO_{S_{K, 2}}^\ast \subset \mcO_{S_{K, 2r}}^\ast$.
	%	Hence the conclusion of Theorem~\ref{main result1 for (2p,2q,r)} is also true for these values of $r$.
	%\end{remark}

	\subsection{Methodology:}
	\label{modular method}
	We used the modular method to prove Theorems~\ref{main result1 for (2p,2q,r)} and \ref{main result2 for (2p,2q,r)}. The following are the key steps in the modular method:  
	\begin{steps}
		\item For any non-trivial integer solution to the equation~\eqref{2p,2q,r}, attach a suitable Frey elliptic curve $E$ defined over a totally real number field $K$.
		\item Then prove the modularity, semi-stability of $E/K$ and the mod $p$ Galois representation $\bar{\rho}_{E,p}$ attached to $E$ is irreducible for $p \gg 0$. 
		\item Using level lowering results to conclude that $\bar{\rho}_{E,p} \sim \bar{\rho}_{f,p}$, for some Hilbert modular newform $f$ over $K$ of parallel weight $2$ with rational Hecke eigenvalues and level $\mfn_p$ which is independent of the solution.
		\item Prove that the finitely many Hilbert modular newforms that occur in the above step do not correspond to $\bar{\rho}_{E,p}$, to get a contradiction.
	\end{steps}
	We now explain the proof of our main results, i.e., Theorems~\ref{main result1 for (2p,2q,r)} and ~\ref{main result2 for (2p,2q,r)} briefly in terms of the above steps of the modular method. 
	\begin{itemize}
		\item For any non-trivial integer solution $(a,b,c)$ to the equation $x^{2p}+y^{2q}=z^r$, we attach Frey elliptic curves $E$ defined over the cyclotomic totally real field $K:= \Q(\zeta_r+ \zeta_r^{-1})$  (cf. \eqref{Frey curve for r not divide x}, \eqref{Frey curve for r divide x} for the Frey curves).
		
		\item By Theorem~\ref{semi stable red of Frey curve}, the Frey curve $E/K$ is semistable. Using Theorems~\ref{modularity cycl} and ~\ref{irreducibility of mod $P$ representation}, there exist effectively computable constants $C_r, D_r>0$ such that the Frey curve $E/K$ is modular for $p,q \geq C_r$ and $\bar{\rho}_{E,p}$ is irreducible for $p >D_r$.
		
		\item  Then using the level-loweing result, i.e., Theorem~\ref{level lowering of mod $p$ repr}, we get $\bar{\rho}_{E,p} \sim \bar{\rho}_{f,p}$, for some Hilbert modular newform $f$ of level $\mfn_p$, where $\mfn_p$ as in \eqref{conductor of E for r not divide a} if $r \nmid a$ (respectively, in \eqref{conductor of E for r divide a} if $r |a$).
		
		\item To get a contradiction, as outlined in Step $4$, Theorem~\ref{main result1 for (2p,2q,r)} (respectively, Theorem~\ref{main result2 for (2p,2q,r)}) uses technical ideas from \cite{FS15} which involves the $S_{K,2}$-unit equation \eqref{S_K-unit solution} (respectively, $S_{K,2r}$-unit equation~\eqref{S_{K,2r}-unit solution}). We now explain this step briefly.
		
		First, in Lemma~\ref{reduction on T and S}, we prove that the Frey curve $E/K$ has potential multiplicative reduction at all primes $\mfP \in S_{K,2}$ and $p \nmid v_\mfP(j_E)$ for $p > v_\mfP (2)$. Then by using an image of inertia result, i.e., Lemma~\ref{criteria for potentially multiplicative reduction}, we get  $p | \# \bar{\rho}_{E,p}(I_\mfP)$. Next, by using Theorems~\ref{level lowering of mod $p$ repr} and~\ref{FS partial result of E-S conj}, we prove that that there exists a constant $V_r>0$ (depending on $r$) and an elliptic curve $E'$ defined over $K$ such that 
		$E^\prime/K$ has good reduction away from $S_{K,2}$ if $r \nmid a$ (respectively  $S_{K,2r}$ if $r |a$), $E^\prime/K$ has full $2$-torsion points, $\bar{\rho}_{E,p} \sim \bar{\rho}_{E^\prime,p}$ for primes $p \geq V_r$ (cf. Theorem~\ref{auxilary result for main result1} for more details).
		Then, by Lemma~\ref{criteria for potentially multiplicative reduction}, we conclude that $E'$ has potential multiplicative reduction at all primes $\mfP \in S_{K,2}$, i.e., $v_\mfP(j_{E^\prime}) <0$ for all primes $\mfP \in S_{K,2}$.
		
		For the proof of Theorem~\ref{main result1 for (2p,2q,r)} (respectively, Theorem~\ref{main result2 for (2p,2q,r)}), we first relate the $j$-invariant $j_{E'}$ of $E'$ in terms of the solution $(\lambda, \mu)$ to the $S_{K, 2}$-unit equation~\eqref{S_K-unit solution} (respectively  $S_{K,2r}$-unit equation~\eqref{S_{K,2r}-unit solution}). Finally, to get a contradiction in the proof of Theorem~\ref{main result1 for (2p,2q,r)} (respectively, Theorem~\ref{main result2 for (2p,2q,r)}), we use the explicit bound \eqref{assumption for main result1} (respectively, \eqref{assumption for main result2}) on $\lambda, \mu$ to get $v_\mfP(j_{E^\prime}) \geq0$ for some $\mfP \in S_{K,2}$ (cf. \S\ref{proof of main results} for more details).
	\end{itemize}
	
	%\subsection{Limitations}
	
	\subsection{Structure of the article}
	%In \S\ref{notations section for x^r+y^r=dz^p}, we state the main results of this article, i.e., Theorems~\ref{main result1 for (r,r,p)},~\ref{main result2 for (r,r,p)}.
	In \S\ref{preliminary section}, we give all the preliminaries to construct the Frey elliptic curves. In \S\ref{theor backgrnd}, we prove all technical results, including the construction of Frey elliptic curves and the modularity of elliptic curves over $\Q(\zeta_r+ \zeta_r^{-1})$ i.e. Theorem~\ref{modularity cycl}, to prove the main results of this article, i.e., Theorems~\ref{main result1 for (2p,2q,r)} and~\ref{main result2 for (2p,2q,r)}. In  \S\ref{proof of main results}, we prove Theorems~\ref{main result1 for (2p,2q,r)} and ~\ref{main result2 for (2p,2q,r)}. In \S\ref{section for local criteria}, we prove Propositions~\ref{loc crit1 for main result1} and \ref{loc crit1 for main result2}.

	\section{Preliminaries}
	\label{preliminary section}
	\subsection{Cyclotomic fields}
	Fix a rational prime $r \geq 5$. In this subsection, we recall some basic preliminaries for the cyclotomic totally real field $K:= \Q(\zeta_r+ \zeta_r^{-1})$. Clearly $[K: \Q]= \frac{r-1}{2}$, $r$ is totally ramified in $K$ and $r\mcO_K= \mfr^{ \frac{r-1}{2}}$, where $\mfr$ is the unique prime ideal of $\mcO_K$ lying above $r$. 
	We fix the notation
	$$\theta_j:= \zeta_r^j+ \zeta_r^{-j}, \text{ for any } j \in \{1, \dots, \frac{r-1}{2}\}.$$
	The following lemma will be very useful in the sequel.
	\begin{lem}{\cite[Lemma 3.1]{AS16}}
		\label{AS cycl lem}
		Let $K= \Q(\zeta_r+ \zeta_r^{-1})$. For any $j \in \{1, \dots, \frac{r-1}{2}\}$, we have 
		$$\theta_j \in \mcO_K^\ast, \ \theta_j+2 \in \mcO_K^\ast \text{ and }   (\theta_j-2)\mcO_K=\mfr.$$
		Further, for $1 \leq j <k \leq \frac{r-1}{2}$, we have 
		$$ (\theta_j-\theta_k)\mcO_K=\mfr.$$
	\end{lem}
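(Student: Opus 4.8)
The plan is to lift everything to the full cyclotomic field $\Q(\zeta_r)$, where the relevant algebraic integers factor transparently, prove each assertion there, and then descend to $K$. Write $\zeta = \zeta_r$, let $L = \Q(\zeta)$ with ring of integers $\Z[\zeta]$, and let $\mathfrak{R} = (1-\zeta)\Z[\zeta]$ be the unique prime of $L$ above $r$. First I would record three standard facts about $L$. (i) For every $m$ with $r \nmid m$, the element $1 + \zeta^m$ is a unit of $\Z[\zeta]$: it is a Galois conjugate of $1+\zeta$, whose norm $N_{L/\Q}(1+\zeta) = (-1)^{r-1}\Phi_r(-1) = 1$ exhibits $1+\zeta$ as a unit. (ii) For every $m$ with $r\nmid m$, one has $(\zeta^m - 1)\Z[\zeta] = \mathfrak{R}$, since $(1-\zeta^m)/(1-\zeta)$ and its inverse are both algebraic integers (cyclotomic units). (iii) Comparing ramification indices — $r$ is totally ramified in both $L$ and $K$, with $e_L = r-1$ and $e_K = (r-1)/2$ — forces $e(\mathfrak{R}/\mfr) = 2$, i.e. $\mfr\Z[\zeta] = \mathfrak{R}^2$.

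With these in hand, each claim reduces to an explicit factorization. The key identities, valid in $\Z[\zeta]$, are
$$\theta_j = \zeta^{-j}(1 + \zeta^{2j}), \qquad \theta_j + 2 = \zeta^{-j}(1+\zeta^j)^2, \qquad \theta_j - 2 = \zeta^{-j}(\zeta^j - 1)^2,$$
together with
$$\theta_j - \theta_k = -\zeta^{-k}(\zeta^{k-j}-1)(\zeta^{j+k}-1).$$
For the first two, since $r \nmid 2j$ and $r \nmid j$, the factors $1+\zeta^{2j}$ and $1 + \zeta^j$ are units by (i), and $\zeta^{-j}$ is a unit, so $\theta_j$ and $\theta_j + 2$ are units of $\Z[\zeta]$; as they lie in $K$, they are units of $\mcO_K = \Z[\zeta] \cap K$, giving $\theta_j,\ \theta_j+2 \in \mcO_K^\ast$. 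For the third, (ii) gives $(\zeta^j-1)\Z[\zeta] = \mathfrak{R}$, hence $(\theta_j - 2)\Z[\zeta] = \mathfrak{R}^2 = \mfr\Z[\zeta]$ by (iii).

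For the last identity I would check the index ranges that guarantee the two factors are again of the form $\zeta^m - 1$ with $r \nmid m$: when $1 \le j < k \le (r-1)/2$ one has $1 \le k - j \le (r-3)/2 < r$ and $3 \le j + k \le r - 2 < r$, so $r \nmid (k-j)$ and $r \nmid (j+k)$. By (ii) both $(\zeta^{k-j}-1)\Z[\zeta]$ and $(\zeta^{j+k}-1)\Z[\zeta]$ equal $\mathfrak{R}$, and since $-\zeta^{-k}$ is a unit we get $(\theta_j - \theta_k)\Z[\zeta] = \mathfrak{R}^2 = \mfr\Z[\zeta]$. The final step in the two ideal assertions is the descent: because $\theta_j - 2$ and $\theta_j - \theta_k$ lie in $K$, the equalities of extended ideals $(\theta_j - 2)\Z[\zeta] = \mfr\Z[\zeta]$ and $(\theta_j-\theta_k)\Z[\zeta] = \mfr\Z[\zeta]$ contract back to $(\theta_j - 2)\mcO_K = \mfr$ and $(\theta_j-\theta_k)\mcO_K = \mfr$, using that extension of nonzero ideals along the Dedekind extension $\mcO_K \hookrightarrow \Z[\zeta]$ is injective (one recovers $I$ as $I\Z[\zeta] \cap \mcO_K$).

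I expect the only genuinely delicate points to be bookkeeping rather than conceptual: verifying the index inequalities that force each exponent coprime to $r$ (so that every $\zeta^m - 1$ generates \emph{exactly} $\mathfrak{R}$, with no spurious prime factors and no extra ramification), and making the $L/K$ descent rigorous through the relation $\mfr\Z[\zeta] = \mathfrak{R}^2$. Everything else follows directly from the norm computation for $1+\zeta$ and the standard description of the prime above $r$ in a prime cyclotomic field.
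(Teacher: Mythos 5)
Your proof is correct: the factorizations $\theta_j=\zeta^{-j}(1+\zeta^{2j})$, $\theta_j+2=\zeta^{-j}(1+\zeta^j)^2$, $\theta_j-2=\zeta^{-j}(\zeta^j-1)^2$ and $\theta_j-\theta_k=-\zeta^{-k}(\zeta^{k-j}-1)(\zeta^{j+k}-1)$ are all verified, the index bounds ensuring $r\nmid 2j$, $r\nmid(k-j)$, $r\nmid(j+k)$ are right, and the descent via $\mfr\Z[\zeta_r]=\mathfrak{R}^2$ and injectivity of ideal extension is sound. Note that the paper itself gives no proof --- it quotes this lemma from Anni--Siksek \cite[Lemma 3.1]{AS16} --- and your argument (lifting to $\Q(\zeta_r)$, using cyclotomic units, then contracting back to $\mcO_K$) is essentially the same standard approach used in that cited source, so there is nothing further to reconcile.
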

	\subsection{The descent}
	In this section, we recall the descent argument over the field $K= \Q(\zeta_r+ \zeta_r^{-1})$, which will be very useful for the construction of the Frey curves defined over $K$ (cf. \cite[\S4]{AS16} for more details).
	
	Let $(a,b,c) $ be a non-trivial primitive integer solution of the equation $x^{2p}+y^{2q}=z^r$. Since $(a,b,c)$ is primitive and $a^{2p}+b^{2q}=c^r$, it follows that $c$ is odd, and hence  one of $a,b$ is even. Without loss of generality, we can assume $a$ is even, hence $b$ is odd. Since $a^p$ and $b^q$ are coprime integers and $c$ is odd, it follows that $a^p+b^qi$ and $a^p-b^qi$ are coprime in $\Z[i]$. As $a^{2p}+b^{2q}=c^r$, there exists coprime integers $a_0$, $b_0$ such that $a^p+b^qi= (a_0+b_0i)^r$ and $c=a_0^2+b_0^2$. This gives 
	$$a^p =\frac{1}{2} \left((a_0+b_0i)^r+(a_0-b_0i)^r \right).$$
	For any $u,v \in \C$ and odd prime $r$, we recall the standard factorization
	$$u^r+v^r= \prod_{j=0}^{r-1} (u+v\zeta_r^j).$$ This gives 
	\begin{align*}
		& a^p =a_0 \prod_{j=1}^{r-1} \left((a_0+b_0i)+(a_0-b_0i)\zeta_r^j \right) \\
		&=a_0 \prod_{j=1}^{\frac{r-1}{2}} \left((a_0+b_0i)+(a_0-b_0i)\zeta_r^j \right) \left((a_0+b_0i)+(a_0-b_0i)\zeta_r^{-j} \right) \\
		&= a_0 \prod_{j=1}^{\frac{r-1}{2}} \left((\theta_j+2)a_0^2+(\theta_j-2)b_0^2 \right).
	\end{align*}
	Hence 
	\begin{equation}
		a^p= a_0 \prod_{j=1}^{\frac{r-1}{2}} \beta_j, 
	\end{equation}
	where 
	\begin{equation}
		\label{beta_j}
		\beta_j= \left((\theta_j+2)a_0^2+(\theta_j-2)b_0^2 \right), \text{ for } j \in \{1, \dots \frac{r-1}{2} \}.
	\end{equation}
	Hence $\beta_j \in \mcO_K$ for all $j \in \{1, \dots \frac{r-1}{2} \}$. 
	The following lemma give the factorization of $a_0$ and  $\beta_j \mcO_K$, which will be very useful for the computations of the conductor of the Frey elliptic curves over $K$.
	\begin{lem}{\cite[Lemma 4.1]{AS16}}
		\label{AS Lem}
		Let $(a,b,c) $ be a non-trivial primitive integer solution to equation $x^{2p}+y^{2q}=z^r$ and let $n=v_2(a) $. Let $\mfr$ be the unique prime ideal of $\mcO_K$ lying above $r$.
		\begin{enumerate}
			\item If $r \nmid a$, then $a_0= 2^{pn}a_1^p$ and $\beta_j \mcO_K= \mfb_j^p$, where $a_1$ is a rational integer and $a_1 \mcO_K$, $\mfb_1, \dots , \mfb_\frac{r-1}{2}$ are pairwise coprime ideals in $\mcO_K$ which are coprime to $2r$.
			\item If $r | a$, then $a_0= 2^{pn}r^{kp-1}a_1^p$ and $\beta_j \mcO_K= \mfr \mfb_j^p$, where $k=v_r(a)$, $a_1$ is a rational integer and $a_1 \mcO_K$, $\mfb_1, \dots , \mfb_\frac{r-1}{2}$ are pairwise coprime ideals in $\mcO_K$ which are coprime to $2r$.
		\end{enumerate}
	\end{lem}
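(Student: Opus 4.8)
The plan is to work entirely with the ideal factorization produced by the descent, namely $a^p\mcO_K = a_0\mcO_K\cdot\prod_{j=1}^{(r-1)/2}\beta_j\mcO_K$ (with $a_0\neq 0$ since $abc\neq 0$), and to pin down the local contribution of each factor at the primes above $2$ and at $\mfr$, the only primes where the factors can fail to be coprime. First I would record the two consequences of \eqref{beta_j}, namely $\beta_j=\theta_j c+2(a_0^2-b_0^2)$ and $\beta_j-\beta_k=(\theta_j-\theta_k)c$, where $c=a_0^2+b_0^2$. Since $c$ is odd, exactly one of $a_0,b_0$ is even; comparing real and imaginary parts of $a^p+b^qi=(a_0+b_0i)^r$ modulo $2$ (with $a^p$ even, $b^q$ odd) forces $a_0$ even and $b_0$ odd, a fact used throughout.

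Next I would carry out the local analysis at the primes above $2$. For any $\mfP\mid 2$, reducing \eqref{beta_j} modulo $\mfP$ with $a_0$ even and $b_0$ odd gives $\beta_j\equiv\theta_j\pmod{\mfP}$; since $\theta_j\in\mcO_K^\ast$ by Lemma~\ref{AS cycl lem}, this yields $v_\mfP(\beta_j)=0$. Hence the whole $2$-part of $a^p$ sits in $a_0$, and comparing $\mfP$-valuations in $a^p\mcO_K=a_0\mcO_K\prod_j\beta_j\mcO_K$ gives $v_2(a_0)=pn$. The coprimality statements are then routine: if a prime $\mfp\nmid 2r$ divides both $\beta_j$ and $\beta_k$ with $j\neq k$, then $\mfp\mid(\theta_j-\theta_k)c$, and since $(\theta_j-\theta_k)\mcO_K=\mfr$ we get $\mfp\mid c$; feeding this into $\beta_j=\theta_j c+2(a_0^2-b_0^2)$ forces $\mfp\mid a_0$ and $\mfp\mid b_0$, contradicting $\gcd(a_0,b_0)=1$. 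The same manipulation shows $a_0\mcO_K$ and $\beta_j\mcO_K$ are coprime away from $2r$.

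The heart of the argument is the analysis at $\mfr$. Since $(\theta_j-2)\mcO_K=\mfr$ and $\theta_j+2\in\mcO_K^\ast$ by Lemma~\ref{AS cycl lem}, one has $\beta_j\equiv 4a_0^2\pmod{\mfr}$, so $\mfr\mid\beta_j$ exactly when $r\mid a_0$; tracing this through $a^p=a_0\prod_j\beta_j$ shows $r\mid a\iff r\mid a_0$, which is precisely the dichotomy of the lemma. If $r\nmid a$, then $r\nmid a_0$, every factor is coprime to $\mfr$, and all factors are pairwise coprime away from $2$. If $r\mid a$, I would compute $v_\mfr(\beta_j)$ exactly: the first term of \eqref{beta_j} has $v_\mfr\big((\theta_j+2)a_0^2\big)=(r-1)\,v_r(a_0)\geq r-1\geq 4$, while $r\nmid b_0$ gives $v_\mfr\big((\theta_j-2)b_0^2\big)=1$, so $v_\mfr(\beta_j)=1$. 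Writing $k'=v_r(a_0)$ and comparing $\mfr$-valuations in $a^p\mcO_K=a_0\mcO_K\prod_j\beta_j\mcO_K$ (using $v_\mfr(r)=\frac{r-1}{2}$ and that there are $\frac{r-1}{2}$ factors $\beta_j$) yields $pk=k'+1$, i.e.\ $v_r(a_0)=pk-1$.

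Finally I would run the descent. Away from $2r$ the factors $a_0\mcO_K,\beta_1\mcO_K,\dots$ are pairwise coprime and their product is the perfect $p$-th power $(a\mcO_K)^p$, so the prime-to-$2r$ part of each is a $p$-th power of an ideal; combining with the exact local data ($v_\mfP(\beta_j)=0$ for $\mfP\mid2$, and $v_\mfr(\beta_j)=0$ or $1$) gives $\beta_j\mcO_K=\mfb_j^p$ when $r\nmid a$ and $\beta_j\mcO_K=\mfr\,\mfb_j^p$ when $r\mid a$, with the $\mfb_j$ pairwise coprime and coprime to $2r$. Likewise $a_0\mcO_K$ is $2^{pn}$ (resp.\ $2^{pn}r^{pk-1}$) times a prime-to-$2r$ $p$-th power. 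Since $a_0\in\Z$ and every rational prime $\ell\neq r$ is unramified in $K$, for $\mfp\mid\ell$ we have $v_\mfp(a_0)=v_\ell(a_0)$, which is therefore divisible by $p$; hence the prime-to-$2r$ part of $a_0$ is a rational $p$-th power up to sign, and as $p$ is odd it equals $a_1^p$ for some $a_1\in\Z$. This gives $a_0=2^{pn}a_1^p$ (resp.\ $a_0=2^{pn}r^{pk-1}a_1^p$), as claimed. I expect the delicate points to be the exact $\mfr$-valuation computation and the bookkeeping producing $pk-1$, together with the observation—via unramifiedness of $\ell\neq r$ in $K$—that a $p$-th-power ideal generated by a rational integer must arise from a rational $p$-th power.
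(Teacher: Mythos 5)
Your proof is correct, and since the paper itself offers no argument for this lemma (it is imported verbatim as \cite[Lemma 4.1]{AS16}), the only meaningful comparison is with the standard descent argument of Anni--Siksek, which yours reconstructs faithfully: the identities $\beta_j=\theta_j c+2(a_0^2-b_0^2)$ and $\beta_j-\beta_k=(\theta_j-\theta_k)c$ for coprimality away from $2r$, the unit computation $v_\mfP(\beta_j)=0$ at $\mfP\mid 2$, the exact valuation $v_\mfr(\beta_j)=1$ when $r\mid a$ giving $v_r(a_0)=pk-1$, and the passage from $p$-th-power ideals to a rational $p$-th power via unramifiedness of $\ell\neq r$ and oddness of $p$ are all sound and are exactly the ingredients the cited proof uses. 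No gaps.
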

	
	\section{Theoretical background}
	\label{theor backgrnd}
	\subsection{Construction of Frey elliptic curve}
	\label{section for Frey curve}
	In this subsection, we recall the Frey elliptic curve associated to any non-trivial primitive integer solution to the equation $x^{2p}+y^{2q}=z^r$ (cf. \cite[\S 7]{AS16} for more details). Fix a rational prime $r \geq 5$ and fix any two distinct integers $j,k \in \{1, \dots, \frac{r-1}{2}\}$. Recall that $K= \Q(\zeta_r+ \zeta_r^{-1})$. 
	
	Let $(a,b,c) \in \Z^3$ be a non-trivial primitive solution of the equation \eqref{2p,2q,r} and let $\beta_j, \beta_k$ as in \eqref{beta_j}. The idea is that the three terms $\beta_j, \beta_k, a_0^2$ are roughly $p$-th power (cf. Lemma~\ref{AS Lem}) and they are linearly dependent over $K$. So we can find $\alpha, \beta, \gamma \in \mcO_K \setminus \{0\}$ such that $\alpha\beta_j + \beta \beta_k+ \gamma a_0^2=0$, which gives a Fermat equation of signature $(p,p,p)$ over $K$ and hence we can associate a Frey curve over $K$.
	More precisely;
	\begin{enumerate}
		\item If $r \nmid a$, then choose 
		\begin{equation}
			\label{alpha, beta, gamma, r not divide a}
			\alpha=1, \ \beta= \frac{\theta_j-2}{\theta_k-2},\ \gamma=4\frac{\theta_j-\theta_k}{\theta_k-2}.
		\end{equation} 
		Clearly, $\alpha, \beta, \gamma \in \mcO_K \setminus \{0\}$ and 
		$\alpha\beta_j + \beta \beta_k+ \gamma a_0^2=0$. Moreover by Lemmas~\ref{AS cycl lem} and~\ref{AS Lem}, we have 
		\begin{equation}
			\label{beta_j, bet_k, gamma, r not divide a}
			\beta_j \mcO_K= \mfb_j^p, \ \beta \beta_k \mcO_K= \mfb_k^p, \  \gamma a_0^2\mcO_K= 2^{2pn+2}a_1^{2p}\mcO_K,
		\end{equation}
		where $n=v_2(a)$. Now, consider the Frey elliptic curve $E/K$ as follows:
		\begin{equation}
			\label{Frey curve for r not divide x}
			E:=E_{a,b,c}: Y^2=X(X-\beta_j) (X+ \beta  \beta_k ).
		\end{equation}
		Here, $c_4=2^4(\beta_j^2-\beta \gamma \beta_k a_0^2)$,  $\Delta_E=2^4(\beta\gamma \beta_j \beta_ka_0^2)^2$ and $j_E= \frac{c_4^3}{\Delta_E}$.
		\item If $r| a$, then choose 
		\begin{equation}
			\label{alpha, beta, gamma, r divide a}
			\alpha=\frac{1}{\theta_j-2}, \ \beta= - \frac{1}{\theta_k-2},\ \gamma=4\frac{\theta_j-\theta_k}{(\theta_j-2)(\theta_k-2)}.
		\end{equation} 
		Clearly, $\alpha, \beta, \gamma \in \mcO_K \setminus \{0\}$ and 
		$\alpha\beta_j + \beta \beta_k+ \gamma a_0^2=0$. Moreover by Lemmas~\ref{AS cycl lem} and ~\ref{AS Lem}, we get 
		\begin{equation}
			\label{beta_j, bet_k, gamma, r divide a}
			\alpha\beta_j \mcO_K= \mfb_j^p, \ \beta \beta_k \mcO_K= \mfb_k^p, \  \gamma a_0^2\mcO_K= 2^{2pn+2} \mfr^\delta a_1^{2p}\mcO_K, 
		\end{equation}
		where $\delta= (kp-1)(r-1)-1$.	
		%		$\alpha \beta_j \mcO_K= \mfb_j^p$, $\beta \beta_k \mcO_K= \mfb_k^p$ and  $\gamma a_0^2\mcO_K= 2^{2pn+2}\mfr^\delta a_1^{2p}\mcO_K$, where $\delta= (kp-1)(r-1)-1$.	
		Consider the Frey elliptic curve $E/K$ as follows:
		\begin{equation}
			\label{Frey curve for r divide x}
			E:=E_{a,b,c}: Y^2=X(X-\alpha\beta_j) (X+ \beta  \beta_k )
		\end{equation}
		Here, $c_4=2^4(\alpha^2 \beta_j^2-\beta \gamma \beta_k a_0^2)$,  $\Delta_E=2^4(\alpha \beta\gamma \beta_j \beta_ka_0^2)^2$ and $j_E= \frac{c_4^3}{\Delta_E}$.
	\end{enumerate}
	%	\begin{remark}
		%		We note that the Frey curve in~\eqref{Frey curve result1} depends on $p$ since $A,B,C$ depend on $p$.
		%	\end{remark}
	
	\subsection{Conductor of the Frey elliptic curve}
	In this subsection, we study the reduction type of the Frey elliptic curves $E/K$ given in \eqref{Frey curve for r not divide x} and \eqref{Frey curve for r divide x}.
	For any non-zero prime ideal $\mfq$ of $\mcO_K$, let $\Delta_\mfq$ be the minimal discriminant of $E$ at $\mfq$. For any rational prime $p$, let 
	\begin{equation}
		\label{conductor of elliptic curve}
		\mfm_p:= \prod_{ p|v_\mfq(\Delta_\mfq) \text{ and}\ \mfq ||\mfn} \mfq \text{ and } \mfn_p:=\frac{\mfn}{\mfm_p}.
	\end{equation}
	The following theorem determines the conductor of the Frey elliptic curves $E/K$ given in \eqref{Frey curve for r not divide x} and \eqref{Frey curve for r divide x}.
	
	\begin{thm}
		\label{semi stable red of Frey curve}
		Let $(a,b,c) \in \Z^3$ be a non-trivial primitive solution of the equation \eqref{2p,2q,r} and let $E/K$ be the Frey curve given in \eqref{Frey curve for r not divide x} for $r \nmid a$ (respectively in \eqref{Frey curve for r divide x} for $r|a$). Let $\mfr$ be the unique prime ideal of $\mcO_K$ lying above $r$.
		\begin{enumerate}
			\item If $r \nmid a$, then $E/K$ is semistable, good reduction at $\mfr$ and multiplicative reduction at all primes of $\mcO_K$ lying above $2$. Moreover, $E$ is minimal at all primes $\mfq$ of $\mcO_K$ with $\mfq \nmid 2$ and satisfies $p | v_\mfq(\Delta_E)$. Let $\mfn$ be the conductor of $E/ K^+$ and $\mfn_p$ be as in \eqref{conductor of elliptic curve}. Then,
			\begin{equation}
				\label{conductor of E for r not divide a}
				\mfn=2\mcO_K\prod_{\mfq \in P_K,\ \mfq |a_0\beta_j \beta_K} \mfq,\ \mfn_p=2\mcO_K.
			\end{equation}
			
			\item If $r| a$, then $E/K$ is semistable, multiplicative reduction at $\mfr$ and all primes of $\mcO_K$ lying above $2$. Moreover, $E$ is minimal at all primes $\mfq$ of $\mcO_K$ with $\mfq \nmid 2r$ and and satisfies $p | v_\mfq(\Delta_E)$. Then,
			\begin{equation}
				\label{conductor of E for r divide a}
				\mfn=2\mcO_K \mfr \prod_{\mfq \in P_K,\ \mfq |a_0\beta_j \beta_K} \mfq,\ \mfn_p=2\mcO_K\mfr.
			\end{equation}
		\end{enumerate}
	\end{thm}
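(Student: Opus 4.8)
The plan is to compute the valuations of the standard invariants $c_4$ and $\Delta_E$ at every prime of $\mcO_K$ and then read off the reduction type via Tate's algorithm together with the $j$-invariant criterion, sorting the primes into three families: those $\mfq\nmid 2r$, the ramified prime $\mfr$, and the primes $\mfP\mid 2$. Throughout I would use the factorizations recorded in Lemmas~\ref{AS cycl lem} and~\ref{AS Lem}, together with the observation that for the model $Y^2=X(X-\beta_j)(X+\beta\beta_k)$ (case $r\nmid a$) the three pairwise differences of the roots $0,\beta_j,-\beta\beta_k$ are, up to sign, $\beta_j$, $\beta\beta_k$ and $\gamma a_0^2$, the last one because the relation $\alpha\beta_j+\beta\beta_k+\gamma a_0^2=0$ with $\alpha=1$ gives $\beta_j+\beta\beta_k=-\gamma a_0^2$; the case $r\mid a$ is identical with $\beta_j$ replaced by $\alpha\beta_j$.

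For primes $\mfq\nmid 2r$ I would first establish semistability. By Lemma~\ref{AS Lem} the ideals $\mfb_j,\mfb_k,a_1\mcO_K$ are pairwise coprime and coprime to $2r$, so at any such $\mfq$ at most one of the three root-differences is divisible by $\mfq$; hence at most two of the three roots collide modulo $\mfq$, which rules out additive reduction and forces good or multiplicative reduction. Wherever $\mfq\mid\Delta_E$ the surviving difference is an $\mfq$-unit multiple of the other two, so $v_\mfq(c_4)=0$, the model is minimal at $\mfq$, the reduction is multiplicative, and $v_\mfq(\Delta_E)$ is a multiple of $2p$, in particular divisible by $p$. At $\mfr$ in the case $r\nmid a$, Lemma~\ref{AS cycl lem} shows that $\theta_j-2$, $\theta_k-2$ and $\theta_j-\theta_k$ all generate $\mfr$, so $\beta$ is an $\mfr$-unit and $\gamma$ is $4$ times an $\mfr$-unit; combined with $v_\mfr(\beta_j)=v_\mfr(\beta\beta_k)=0$ from Lemma~\ref{AS Lem}, all three differences are $\mfr$-units, so $v_\mfr(\Delta_E)=0$ and $E$ has good reduction at $\mfr$. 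In the case $r\mid a$ the factorization gives $v_\mfr(\gamma a_0^2)=\delta>0$ while $v_\mfr(\alpha\beta_j)=v_\mfr(\beta\beta_k)=0$, so exactly two roots collide modulo $\mfr$ and $0$ stays distinct; the reduction is multiplicative, $v_\mfr(c_4)=0$ makes the model minimal there, and $v_\mfr(\Delta_E)=2\delta$.

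The hard part is the analysis at the primes $\mfP\mid 2$, where the given short Weierstrass model is non-minimal and the residue characteristic $2$ can a priori conceal additive reduction; note this step is uniform in the two cases since the $\mfr^\delta$ factor is an $\mfP$-unit. A direct computation gives $v_\mfP(c_4)=4v_\mfP(2)$ and $v_\mfP(\Delta_E)=(4pn+8)v_\mfP(2)$ with $n=v_2(a)$, whence
\[
v_\mfP(j_E)=3v_\mfP(c_4)-v_\mfP(\Delta_E)=(4-4pn)v_\mfP(2)<0
\]
for $n\geq 1$ and $p\geq 5$, so $E$ has potentially multiplicative reduction at $\mfP$. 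To upgrade this to genuine multiplicative reduction I would exhibit a local minimal model: scaling by a $u$ with $v_\mfP(u)=v_\mfP(2)$ produces invariants with $v_\mfP(c_4^{\min})=0$ and $v_\mfP(\Delta^{\min})=(4pn-4)v_\mfP(2)>0$, the exact numerical signature of multiplicative reduction. The genuinely delicate point is verifying that this scaled model is integral at $\mfP$, i.e. checking the Kraus-type congruence conditions at the even prime; once this is secured, $E$ is semistable at $\mfP$ with conductor exponent $1$, and $p\nmid v_\mfP(\Delta^{\min})=4(pn-1)v_\mfP(2)$ whenever $p\nmid 4v_\mfP(2)$, which holds for the exponents $p$ in question.

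Finally I would assemble the conductor. Semistability everywhere gives $\mfn=2\mcO_K\prod_{\mfq\mid a_0\beta_j\beta_k}\mfq$ in the case $r\nmid a$, and the same with the extra factor $\mfr$ in the case $r\mid a$. For $\mfn_p$ I recall its definition in \eqref{conductor of elliptic curve}: the primes $\mfq\nmid 2r$ satisfy $p\mid v_\mfq(\Delta^{\min})$ and hence lie in $\mfm_p$ and are removed, whereas at the primes $\mfP\mid 2$ (and at $\mfr$ in the case $r\mid a$, where $v_\mfr(\Delta^{\min})=2\delta\equiv -2r\pmod p$ is not divisible by $p$) one has $p\nmid v(\Delta^{\min})$, so these primes survive. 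This yields $\mfn_p=2\mcO_K$ and $\mfn_p=2\mcO_K\mfr$ respectively, completing the proof. I expect the construction and integrality check of the minimal model at the even primes to be the main obstacle, everything else being a bookkeeping of valuations.
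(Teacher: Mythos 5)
Your analysis away from the even primes is sound and matches what the paper does in its self-contained part: at $\mfq\nmid 2r$ the pairwise coprimality of $\mfb_j$, $\mfb_k$, $a_1\mcO_K$ from Lemma~\ref{AS Lem} forces $v_\mfq(c_4)=0$ whenever $\mfq\mid\Delta_E$, giving minimality, multiplicative reduction and $p\mid v_\mfq(\Delta_E)$; your treatment of $\mfr$ via Lemma~\ref{AS cycl lem} (good reduction when $r\nmid a$, multiplicative with $v_\mfr(\Delta_E)=2\delta$ when $r\mid a$) is likewise correct, and your explicit observations that $2\delta\equiv -2r\pmod p$ and $p\nmid 4(pn-1)v_\mfP(2)$ are actually spelled out more carefully than in the paper, which leaves the computation of $\mfn_p$ to the definition \eqref{conductor of elliptic curve} and the cited source.

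However, there is a genuine gap exactly where you flag ``the main obstacle'': at the primes $\mfP\mid 2$ you only establish \emph{potential} multiplicative reduction (from $v_\mfP(j_E)<0$) and then assert that a rescaled model with $v_\mfP(c_4^{\min})=0$ exists, deferring the integrality of that model (``once this is secured\dots''). This is not a routine bookkeeping step: in residue characteristic $2$, a curve with $v_\mfP(j_E)<0$ can perfectly well have additive reduction (a ramified quadratic twist of a Tate curve), and ruling this out requires Kraus-type congruences, here using that $a$ is even and $b$ is odd, so that $\beta_j=(\theta_j+2)a_0^2+(\theta_j-2)b_0^2$ satisfies explicit congruences modulo powers of $\mfP$ while $v_\mfP(\gamma a_0^2)=(2pn+2)v_\mfP(2)$ is large. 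Without carrying out this verification you have not proved semistability at $\mfP\mid 2$, which is one of the central claims of the theorem (and is what the later arguments, e.g.\ Lemma~\ref{reduction on T and S} and the level-lowering, rely on). The paper sidesteps precisely this computation by invoking \cite[Lemmas 6.1 and 6.2]{AS16}, where the $2$-adic analysis is done; your proposal either needs to import those lemmas the same way or actually execute the congruence check you postponed. As written, the proof is incomplete at its most delicate point.
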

	
	\begin{proof}
		%		Let $\mfq\in P_{K^+}$ with $\mfq \notin S_{K^+,rd}$.
		%		Recall that $\Delta_E=2^4(ABC)^2$ and $c_4=2^4(AB+BC+CA)$.
		\begin{enumerate}
			\item If $r \nmid a$, then by ~\cite[Lemma 6.1]{AS16}, $E/K$ is semistable, good reduction at $\mfr$, multiplicative reduction at all primes of $\mcO_K$ lying above $2$ and the conductor $\mfn$ of $E$ is given by \eqref{conductor of E for r not divide a}.
			Recall that here $c_4=2^4(\beta_j^2-\beta \gamma \beta_k a_0^2)$ and $\Delta_E=2^4(\beta\gamma \beta_j \beta_ka_0^2)^2$.
			If $\mfq \nmid \Delta_E$, then $E/K$ has good reduction at $\mfq$ and hence $p | v_\mfq(\Delta_E)=0$. 
			
			If $\mfq | \Delta_E$, then $\mfq | \beta_j \beta  \beta_k \gamma a_0^2$ because $\mfq \nmid 2$. By equation~\eqref{beta_j, bet_k, gamma, r not divide a}, we have $\beta_j \mcO_K= \mfb_j^p$, $\beta \beta_k \mcO_K= \mfb_k^p$ and $\gamma a_0^2\mcO_K= 2^{2pn+2}a_1^{2p}\mcO_K$. Since $\mfq | \beta_j \beta  \beta_k \gamma a_0^2$ and by Lemma~\ref{AS Lem}(1), it follows that $\mfq$ divides exactly one of $\beta_j$,  $\beta  \beta_k$, $ \gamma a_0^2$ because $\mfq \nmid 2$. 
			%			Hence $\mfq \nmid c_4$. 
			This gives $v_\mfq (c_4)=0$, hence $E$ is minimal and has multiplicative reduction at $\mfq$. 
			Since $\mfq \nmid 2$, $\beta_j \mcO_K= \mfb_j^p$, $\beta \beta_k \mcO_K= \mfb_k^p$ and $\gamma a_0^2\mcO_K= 2^{2pn+2}a_1^{2p}\mcO_K$, we have $p|v_\mfq(\Delta_E)=2p\left(v_\mfq(\mfb_j)+v_\mfq(\mfb_k)+v_\mfq(a_1^2) \right)$. Finally, using the definition of $\mfn_p$ in~\eqref{conductor of elliptic curve}, we have $\mfn_p=2\mcO_K$.

			\item If $r |a$, then by ~\cite[Lemma 6.2]{AS16}, $E/K$ is semistable, multiplicative reduction at the prime $\mfr$ and all primes of $\mcO_K$ lying above $2$, and the conductor $\mfn$ of $E$ is given by \eqref{conductor of E for r divide a}.
			Recall that here $c_4=2^4(\alpha^2\beta_j^2-\beta \gamma \beta_k a_0^2)$ and $\Delta_E=2^4(\alpha \beta\gamma \beta_j \beta_ka_0^2)^2$.
			If $\mfq \nmid \Delta_E$, then $E/K$ has good reduction at $\mfq$ and hence $p | v_\mfq(\Delta_E)=0$. 
			
			If $\mfq | \Delta_E$, then $\mfq | \alpha \beta_j \beta  \beta_k \gamma a_0^2$ because $\mfq \nmid 2$. By equation~\eqref{beta_j, bet_k, gamma, r divide a}, we have $\alpha \beta_j \mcO_K= \mfb_j^p$, $\beta \beta_k \mcO_K= \mfb_k^p$ and $\gamma a_0^2\mcO_K= 2^{2pn+2}\mfr^\delta a_1^{2p}\mcO_K$. Since  $\mfq | \alpha \beta_j \beta  \beta_k \gamma a_0^2$ and by Lemma~\ref{AS Lem}(2), it follows that $\mfq$ divides exactly one of $\alpha \beta_j$,  $\beta  \beta_k$, $ \gamma a_0^2$ because $\mfq \nmid 2r$. 
			%			Since $\mfq \nmid 2r$, , by Lemma~\ref{AS Lem} we have, $\mfq$ divides exactly one of $\alpha \beta_j$,  $\beta  \beta_k$ and $ \gamma a_0^2$, hence $\mfq \nmid c_4$. 
			This gives $v_\mfq (c_4)=0$, hence $E$ is minimal and has multiplicative reduction at $\mfq$. 
			Since $\mfq \nmid 2r$, $\alpha \beta_j \mcO_K= \mfb_j^p$, $\beta \beta_k \mcO_K= \mfb_k^p$ and $\gamma a_0^2\mcO_K= 2^{2pn+2}\mfr^\delta a_1^{2p}\mcO_K$, we have $p|v_\mfq(\Delta_E)=2p\left(v_\mfq(\mfb_j)+v_\mfq(\mfb_k)+v_\mfq(a_1^2) \right)$. Now, using the definition of $\mfn_p$ in~\eqref{conductor of elliptic curve}, we have $\mfn_p=2\mcO_K \mfr$.
			%\end{itemize}
		\end{enumerate} 
		%		Using the definition of $\mfn_p$ in~\eqref{conductor of elliptic curve}, we have $\mfq \nmid \mfn_p$ for all $\mfq \notin S_{K^+,2rd}$. Finally, for $\mfP \in S_{K^+,2rd}$, the bounds on $r_\mfP$ follow from \cite[Theorem IV.10.4]{S94}.
	\end{proof}
	
	\subsection{Modularity of the Frey elliptic curve}
	In this subsection, we discuss the modularity of the Frey curves $E/ K$  given in \eqref{Frey curve for r not divide x} and \eqref{Frey curve for r divide x}. First, we recall the definition of the modularity of elliptic curves over totally real number fields.
	\begin{dfn}
		Let $F$ be a totally real number field. We say an elliptic curve $E/F$ is modular if there exists a primitive Hilbert modular newform $f$ over $F$ of parallel weight $2$ with rational eigenvalues such that both $E$ and $f$ have the same $L$-function over $F$.
	\end{dfn} 
	We first recall a modularity result of Freitas, Le Hung, and Siksek for totally real fields. 
	\begin{thm} \rm(\cite[Theorem 5]{FLHS15})
		\label{modularity result of elliptic curve over totally real}
		Let $F$ be a totally real number field. Then, up to isomorphism over $\bar{F}$, there are only finitely many elliptic curves over $F$ which are not modular. 
	\end{thm}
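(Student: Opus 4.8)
The plan is to reduce the modularity of an elliptic curve $E/F$ to the residual modularity of its mod-$p$ Galois representations $\bar\rho_{E,p}$ for the small primes $p\in\{3,5,7\}$, and then to show that the curves for which this reduction fails for \emph{every} such $p$ are confined to the $F$-rational points of finitely many modular curves of general type. Concretely, I would invoke the modularity lifting theorems for two-dimensional $p$-adic representations over totally real fields (Kisin, Barnet-Lamb--Gee--Geraghty, Skinner--Wiles in the ordinary case, together with Thorne's adequacy refinements): for $p\geq 3$, if $\bar\rho_{E,p}$ is modular and its restriction $\bar\rho_{E,p}|_{G_{F(\zeta_p)}}$ is absolutely irreducible with adequate image, then $E$ is modular. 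The required residual modularity is supplied prime by prime. For $p=3$ the projective image lies in $\PGL_2(\F_3)\cong S_4$, which is solvable, so Langlands--Tunnell together with solvable base change shows $\bar\rho_{E,3}$ is automorphic whenever it is irreducible. For $p=5,7$ I would run the usual ``switching'' argument, realizing a residual representation of large image inside the mod-$p$ representation of an auxiliary elliptic curve over $F$ whose modularity is already known, exploiting that the moduli of pairs of curves with a prescribed symplectic isomorphism of $p$-torsion is rational.

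Granting these inputs, any non-modular $E/F$ must have, for each $p\in\{3,5,7\}$, a residual representation that is either non-modular or fails the adequacy (big image) hypothesis. I would then classify small image via Dickson's theorem: the projective image of $\bar\rho_{E,p}$ is cyclic (so $E$ admits a $p$-isogeny), dihedral (image in the normalizer of a split or nonsplit Cartan), or one of the exceptional groups $A_4,S_4,A_5$. Each possibility cuts out a modular curve mapping to the $j$-line: $X_0(p)$ for the reducible case, $X_{\mathrm{sp}}^+(p)$ or $X_{\mathrm{ns}}^+(p)$ for the dihedral cases, and modular curves for the exceptional subgroups. Thus the locus of non-modular $E$ maps into the $F$-points of the fibre product over the $j$-line of such modular curves for a pair of primes, which I would take to be $5$ and $7$.

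The finiteness then follows from a genus computation combined with Faltings' theorem: for the pair $(5,7)$ the relevant fibre products (for instance $X_0(35)$ in the doubly reducible case, and its analogues with one or both conditions replaced by Cartan conditions) have genus $\geq 2$, hence only finitely many $F$-points, while the exceptional-image loci contribute only finitely many $\bar F$-values of $j$ independent of $F$. Since modularity is invariant under twisting and a $\bar F$-isomorphism class of elliptic curves is determined by its $j$-invariant, finitely many bad $j$-invariants yield finitely many non-modular curves up to $\bar F$-isomorphism, which is the assertion.

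The technical heart, and the step I expect to be the main obstacle, is assembling a modularity lifting theorem whose hypotheses are weak enough that the only escaping curves have small residual image at all of $3,5,7$: this requires controlling the local deformation conditions at the primes of $F$ above $p$ (ordinary versus flat/Fontaine--Laffaille, which is delicate for $p=3$ and when $F$ ramifies at $p$), and it rests on the genuinely deep residual modularity at $p=5,7$. Subordinate to this, the genus-$\geq 2$ verification for the fibre products, although elementary in principle, must be carried out carefully to guarantee that Faltings applies, and the finitely many components of genus $0$ or $1$ arising from the low-genus and exceptional pieces have to be isolated and absorbed into the finite exceptional set by hand.
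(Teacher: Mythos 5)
First, a point of context: the paper does not prove this statement at all --- it is quoted verbatim from Freitas--Le Hung--Siksek \cite[Theorem 5]{FLHS15} --- so your proposal can only be measured against the proof in that reference. In broad architecture your sketch does track that proof: modularity lifting theorems applied at $p\in\{3,5,7\}$, residual modularity via Langlands--Tunnell at $3$ and the $3$--$5$ and $5$--$7$ switches (using rationality of twisted modular curves), the reduction of non-modularity to simultaneous small image, and finally Faltings' theorem applied to the modular curves of mixed level at the pair $(5,7)$, all nine of which (Borel or Cartan-normalizer structure at each of $5$ and $7$) indeed have genus $\geq 2$, with the $j$-invariant then pinning down the curve up to $\bar F$-isomorphism.

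There is, however, a genuine gap in how you dispose of the ``bad image'' cases, and it sits exactly where the real difficulty of \cite{FLHS15} lies. Two separate issues. (a) The exceptional projective images $A_4$, $S_4$, $A_5$ are a red herring: every normal subgroup with cyclic quotient of one of these groups is non-cyclic, so by Clifford theory $\bar\rho_{E,p}|_{G_{F(\zeta_p)}}$ remains absolutely irreducible in these cases; they never obstruct the lifting step, so no finiteness claim about exceptional loci is needed for $A_4$ or $S_4$. (b) The case that does matter --- and which your argument cannot dismiss --- is $p=5$ with projective image $\PSL_2(\F_5)\cong A_5$. This is not a small-image case but a large-image one in which adequacy genuinely fails in characteristic $5$ (one has $H^1(\SL_2(\F_5),\mathrm{ad}^0)\neq 0$), so the lifting theorems you invoke do not apply. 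Worse, the condition ``projective mod-$5$ image contained in $A_5$'' is equivalent to $\det\bar\rho_{E,5}=\bar\chi_5$ taking values in the squares of $\F_5^\ast$, i.e.\ to $\sqrt5\in F$; hence over any totally real field containing $\Q(\sqrt5)$ this locus is \emph{all} elliptic curves, it cuts out no modular curve, and your claim that it contributes ``only finitely many $\bar F$-values of $j$ independent of $F$'' is false. In that situation your fibre product over the pair $(5,7)$ degenerates to a level-$7$ condition alone, whose modular curves have genus $\leq 1$ and may have infinitely many $F$-points. In \cite{FLHS15} this is precisely the hard case: it is handled by Thorne's refinement (lifting at $5$ is still permitted when the projective image of the restriction is $A_5$, \emph{provided} $[F(\zeta_5):F]=4$, i.e.\ $\sqrt5\notin F$), and, for fields containing $\sqrt5$, by a substantially more elaborate analysis involving the prime $3$, Skinner--Wiles-type results for residually reducible representations, and additional modular curves. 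As written, your proof does not cover any field containing $\Q(\sqrt5)$ --- in particular it would not even yield the finiteness statement over $\Q(\sqrt5)$ itself.
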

	The following lemma is a consequence of Theorem~\ref{modularity result of elliptic curve over totally real}, which proves the modularity of the Frey curves $E$ in \eqref{Frey curve for r not divide x} and \eqref{Frey curve for r divide x} for large primes $p$ and $q$.
	\begin{lem}{\cite[Lemma 8.1]{AS16}}
		\label{modularity result for main result1}
		Fix $r\geq 5$ a rational prime and let $K:= \Q(\zeta_r+ \zeta_r^{-1})$.  Then, there exists a constant $C_r>0$ (depending on $r$) such that for any non-trivial primitive integer solution $(a,b,c)$ to the equation~\eqref{2p,2q,r} with primes $p, q \geq C_r$, the Frey elliptic curve $E/{K}$ given in \eqref{Frey curve for r not divide x} for $r \nmid a$ (respectively, in \eqref{Frey curve for r divide x} for $r|a$) is modular.  
	\end{lem}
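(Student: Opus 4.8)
The plan is to deduce the result from the finiteness statement in Theorem~\ref{modularity result of elliptic curve over totally real} together with the reduction data of Theorem~\ref{semi stable red of Frey curve}. By Theorem~\ref{modularity result of elliptic curve over totally real} the non-modular elliptic curves over $K$ fall into finitely many $\bar{K}$-isomorphism classes; since such a class is determined by the $j$-invariant (and the $j$-invariant of a curve defined over $K$ lies in $K$), there is a finite set $\mathcal{J} = \{j_1, \dots, j_s\} \subset K$, depending only on $K$ and hence on $r$, such that every elliptic curve $E/K$ with $j_E \notin \mathcal{J}$ is modular. Thus it suffices to produce $C_r$ so that $j_E \notin \mathcal{J}$ whenever $p,q \geq C_r$, handling the two Frey curves \eqref{Frey curve for r not divide x} (case $r\nmid a$) and \eqref{Frey curve for r divide x} (case $r\mid a$) in parallel and taking $C_r$ to be the larger of the two resulting bounds.

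First I would record that $j_E$ is non-integral: by Theorem~\ref{semi stable red of Frey curve} the Frey curve is semistable with multiplicative reduction at every prime above $2$, so $v_\mfq(j_E) < 0$ for $\mfq \mid 2$ and in particular $j_E \notin \mcO_K$. Hence every $j_i \in \mathcal{J} \cap \mcO_K$ (including $0$ and $1728$) may be discarded, and only the finitely many $j_i$ possessing a pole remain. The quantitative input is the divisibility at the multiplicative primes away from the fixed bad set: by Theorem~\ref{semi stable red of Frey curve}, at every prime $\mfq \nmid 2$ (resp. $\mfq \nmid 2r$) dividing $\Delta_E$ one has $v_\mfq(c_4)=0$ and $p \mid v_\mfq(\Delta_E)$, whence
\begin{equation*}
	v_\mfq(j_E) = -v_\mfq(\Delta_E) < 0 \quad\text{and}\quad p \mid v_\mfq(j_E).
\end{equation*}
So if $j_i \in \mathcal{J}$ has a pole at some $\mfq_0 \nmid 2$ (resp. $\mfq_0 \nmid 2r$), then $j_E = j_i$ would force $p \mid v_{\mfq_0}(j_i)$ with $v_{\mfq_0}(j_i) \neq 0$ fixed, giving $p \leq |v_{\mfq_0}(j_i)|$; enlarging $C_r$ past $\max_i \max_{\mfq_0} |v_{\mfq_0}(j_i)|$ eliminates all such $j_i$.

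The remaining case, which I expect to be the main obstacle, is a non-integral $j_i$ whose polar support lies entirely in $\{\mfq \mid 2\}$ (resp. $\{\mfq \mid 2r\}$), where the $p$-divisibility above gives no leverage. Here $j_E = j_i$ forces $E$ to have good reduction away from $\{\mfq\mid 2\}$ (resp. $\{\mfq \mid 2r\}$); since the Frey curve carries full $2$-torsion over $K$, its Legendre parameter $\lambda = -\beta\beta_k/\beta_j$ together with $1-\lambda = -\gamma a_0^2/\beta_j$ (and the analogous ratios for \eqref{Frey curve for r divide x}) would then both be $S$-units for $S = \{\mfq\mid 2\}$ (resp. $S=\{\mfq\mid 2r\}$). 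But by Lemma~\ref{AS Lem} the ideals $\mfb_j, \mfb_k, a_1\mcO_K$ are coprime to $2r$, so $\lambda,\,1-\lambda \in \mcO_S^\ast$ forces $\mfb_j = \mfb_k = (1)$ and $a_1 = \pm 1$. This collapses $E$ to one of finitely many curves with good reduction outside $S$ and full $2$-torsion (finite by Shafarevich-type finiteness, equivalently by finiteness of the $S$-unit equation over $K$), and one checks that each such configuration is compatible with the defining expression \eqref{beta_j} only for bounded $p$, or else corresponds to a trivial solution. Taking $C_r$ to dominate these finitely many bounds as well, and then the maximum over the cases $r\nmid a$ and $r\mid a$, yields the desired constant.
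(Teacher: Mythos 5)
The paper contains no internal proof of this lemma: it is imported directly from \cite[Lemma 8.1]{AS16}, where (as in \cite{FS15}) one uses Theorem~\ref{modularity result of elliptic curve over totally real} to place the $\lambda$-invariant of a putative non-modular Frey curve in a finite set depending only on $K$ and then bounds the exponent. Your reconstruction starts from the same finiteness input but is organized differently: you stratify the finitely many candidate $j$-invariants by polar support (integral; a pole at some $\mfq\nmid 2$, resp.\ $\mfq\nmid 2r$; poles only above $2$, resp.\ $2r$), kill the first stratum by multiplicative reduction above $2$, the second by the divisibility $p\mid v_\mfq(\Delta_E)$ from Theorem~\ref{semi stable red of Frey curve}, and the third by the $S$-unit collapse $\mfb_j=\mfb_k=(1)$, $a_1=\pm1$. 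These reductions are all correct (including the implicit use that modularity depends only on the $j$-invariant, which holds because twists of modular curves are modular and the $j=0,1728$ curves have CM, hence are modular), so this is a genuine, essentially self-contained alternative to the citation.

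The one real weakness is the closing sentence of your third case: ``one checks that each such configuration is compatible with \eqref{beta_j} only for bounded $p$, or else corresponds to a trivial solution.'' That is exactly the step where $p$ gets bounded in the hardest case, and you assert it rather than prove it; the appeal to Shafarevich-type finiteness cannot substitute for it, since the issue is not that there are finitely many candidate curves (that is already Theorem~\ref{modularity result of elliptic curve over totally real}) but that a fixed candidate can be attained only for bounded $p$. Fortunately your own reductions make the verification a one-liner, and you should include it: once $\mfb_j=\mfb_k=(1)$ and $a_1=\pm 1$, Lemma~\ref{AS Lem} gives $a_0=\pm 2^{pn}$ with $n=v_2(a)\geq 1$ (recall $a$ is even in the descent), so for any $\mfP\mid 2$ one gets $v_\mfP(1-\lambda)=v_\mfP\!\left(\gamma a_0^2/\beta_j\right)=(2pn+2)v_\mfP(2)\geq (2p+2)v_\mfP(2)$, while $1-\lambda$ is confined to a finite set depending only on $K$; hence $p$ is bounded. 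Note also that this computation shows your entire stratification can be bypassed: the proof of Lemma~\ref{reduction on T and S} gives $v_\mfP(j_E)=4(1-pn)v_\mfP(2)$ for every non-trivial primitive solution, so $|v_\mfP(j_E)|\geq 4(p-1)v_\mfP(2)$ grows linearly in $p$ and $j_E$ eventually escapes any fixed finite set of $j$-invariants; combined with Theorem~\ref{modularity result of elliptic curve over totally real}, this single valuation already proves the lemma.
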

	
	Note that the constant $C_r$ in the above lemma is ineffective. We now prove that every elliptic curve over the cyclotomic totally real field $K:= \Q(\zeta_r+ \zeta_r^{-1})$ is modular, which makes $C_r$ effective and in fact we can take $C_r=5$ for all $r\geq 5$. This will be very helpful to show the constant $V_r$ in Theorem~\ref{main result1 for (2p,2q,r)} (respectively, in Theorem \ref{main result2 for (2p,2q,r)}) is effectively computable (cf. \S\ref{proof of main results} for more details).
	\begin{thm}
		\label{modularity cycl}
		Let $r\geq 2$ be a rational prime. Then every elliptic curve over $\Q(\zeta_r+ \zeta_r^{-1})$ is modular. In particular, the Frey curves $E$ in \eqref{Frey curve for r not divide x} and \eqref{Frey curve for r divide x} are modular for all primes $p, q \geq 5$.
	\end{thm}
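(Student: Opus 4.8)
The plan is to run the standard three-prime modularity machinery over the totally real field $K = \Q(\zeta_r + \zeta_r^{-1})$ and then to dispose of the finitely many exceptional configurations by a points-on-modular-curves analysis tailored to these cyclotomic fields. First I would recall the modularity-lifting engine assembled in \cite{FLHS15}: an elliptic curve $E$ over a totally real field is modular as soon as, for one of $\ell \in \{3,5,7\}$, the residual representation $\bar{\rho}_{E,\ell}$ is irreducible (the local hypotheses being handled as in \cite{FLHS15}) and residually modular. Residual modularity at $\ell = 3$ is given by Langlands--Tunnell, since the image lies in $\GL_2(\F_3)$, whose projective image $S_4$ is solvable; the cases $\ell = 5, 7$ are reached from $\ell = 3$ by the prime-switching argument of \cite{FLHS15}, passing through an auxiliary curve with prescribed mod-$3$ and mod-$\ell$ level structure. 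Packaging these steps, \cite[Theorem 3]{FLHS15} shows that a non-modular $E/K$ must give rise to a non-cuspidal, non-CM $K$-rational point on one of the four modular curves $X(\mfb_5, \mfb_7)$, $X(\mfb_5, \mathfrak{s}_7)$, $X(\mathfrak{s}_5, \mfb_7)$, $X(\mathfrak{s}_5, \mathfrak{s}_7)$, where $\mfb$ and $\mathfrak{s}$ denote Borel and normalizer-of-split-Cartan level structures. Since CM elliptic curves are modular by the classical theory of Hecke characters, it suffices to prove that these four curves have no non-cuspidal, non-CM points over $K$.

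Next I would analyze these curves through explicit models over $\Q$: one has $X(\mfb_5, \mfb_7) \cong X_0(35)$, a genus-$3$ curve whose Jacobian $J_0(35)$ has Mordell--Weil rank $0$ over $\Q$, while the three curves carrying a split-Cartan factor have comparably small genus and explicit equations. The crucial structural input is that $K/\Q$ is abelian, indeed cyclic of degree $(r-1)/2$, and ramified only at $r$. For the three split-Cartan curves, a non-cuspidal non-CM point over $K$ would force the normalizer-of-split-Cartan image at $5$ or $7$, and hence an auxiliary quadratic structure, to be compatible with the totally real field $K$; following the real-locus analysis of \cite{FLHS15}, such points are not totally real, which rules them out over $K$. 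For $X_0(35)$, a $K$-point corresponds to a non-CM elliptic curve over $K$ admitting a cyclic $35$-isogeny, and I would exclude it by controlling the rank of $J_0(35)$ over $K$: base-changing to the abelian field $K$ factors the relevant $L$-function as a product of twists of the level-$35$ newform by the even Dirichlet characters cutting out $K$, none of which vanishes at the centre, so the rank stays $0$ and the only $K$-points remain the known cusps and CM points.

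The main obstacle is precisely this last uniformity: the degree $[K:\Q] = (r-1)/2$ is unbounded as $r$ grows, so the real-quadratic point computations of \cite{FLHS15} do not transfer directly, and one must leverage the cyclic, unramified-outside-$r$ structure of $K$ to keep the Jacobian ranks (and hence the point sets of the four curves) from growing with $r$. The finitely many small primes $r \in \{5, 7\}$, where $5$ or $7$ ramifies in $K$ and the split-Cartan analysis degenerates, would be dealt with by a separate direct check. Once all four curves are shown to have only cuspidal and CM points over every $K = \Q(\zeta_r + \zeta_r^{-1})$, no non-modular elliptic curve over $K$ can exist; in particular the Frey curves of \eqref{Frey curve for r not divide x} and \eqref{Frey curve for r divide x} are modular, and the modularity constant may be taken to be $C_r = 5$ for every $r \geq 5$.
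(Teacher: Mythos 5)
Your proposal does not follow the paper's proof, which is a short citation argument, and as an independent argument it has genuine gaps. The paper proves Theorem~\ref{modularity cycl} by splitting on $r$: for $r=2,3$ one has $K=\Q$ and modularity is \cite{TW95}, \cite{W95}, \cite{BCDT01}; for $r=5$ the field $K$ is real quadratic, covered by \cite[Theorem 1]{FLHS15}; for $r=7$ it is cubic, covered by \cite[Theorem 4]{DNS20}; and for $r\geq 11$ the key observation is that $K=\Q(\zeta_r+\zeta_r^{-1})$ is abelian over $\Q$ and ramified only at $r$, hence unramified at $3$, $5$, $7$, so Yoshikawa's theorem \cite[Theorem 1.2]{Y18} applies verbatim. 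Your plan --- run the $3$--$5$--$7$ lifting machinery of \cite{FLHS15} and then determine the $K$-points of $X(\mfb_5,\mfb_7)$, $X(\mfb_5,\mathfrak{s}_7)$, $X(\mathfrak{s}_5,\mfb_7)$, $X(\mathfrak{s}_5,\mathfrak{s}_7)$ --- has the correct skeleton, but it amounts to reproving Yoshikawa's theorem (plus \cite{FLHS15} and \cite{DNS20} for $r=5,7$, which you defer to a ``direct check'' although each is a full paper), and the two steps where you actually rule out points do not hold as stated.

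Concretely: (i) for $X(\mfb_5,\mfb_7)\cong X_0(35)$ you claim $J_0(35)(K)$ has rank $0$ because $L(f\otimes\chi,1)\neq 0$ for every newform $f$ of level $35$ and every even Dirichlet character $\chi$ of conductor $r$, simultaneously for all primes $r$. No such non-vanishing theorem exists: Rohrlich-type results give non-vanishing only for all but finitely many twists of a fixed $f$, ineffectively, and the assertion for all characters of every prime conductor is an open problem; moreover, even granting it, you would need Kato's theorem to pass from non-vanishing to rank $0$, plus control of the torsion of $J_0(35)$ over fields of unbounded degree. (ii) For the three split-Cartan curves you assert that, ``following the real-locus analysis'' of \cite{FLHS15}, non-cuspidal non-CM points are never totally real. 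That is not what \cite{FLHS15} proves: their treatment of those curves over real quadratic fields is an explicit arithmetic computation (models, Jacobian ranks, sieving), not a real-locus argument, and it does not transfer to $[K:\Q]=(r-1)/2$ unbounded. The structural input that makes the unbounded-degree case work --- and the reason the paper can simply quote \cite{Y18} --- is the abelian, unramified-at-$3,5,7$ hypothesis, which Yoshikawa exploits through ramification and character-theoretic arguments rather than $L$-value non-vanishing. The statement itself is true, and the efficient repair of your proof is to replace the point-counting paragraphs by citations of \cite{Y18}, \cite{FLHS15}, \cite{DNS20} and \cite{TW95,W95,BCDT01}, as the paper does.
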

	
	\begin{proof}
		%		[Proof of Theorem~\ref{modularity cycl}]
		%	First, we prove that every elliptic curves over $K$ are modular.
		For any rational prime $r \geq 2$, let $K:=\Q(\zeta_r+ \zeta_r^{-1})$.
		Let $E$ be an elliptic curve defined over $K$.
		For $r=2,3$, we get $K=\Q$ and hence $E$ is modular over $K$ by \cite{TW95}, \cite{W95} and \cite{BCDT01}. For $r=5$, we get $[K:\Q]=2$ and hence $E$ is modular over $K$ by  \cite[Theorem 1]{FLHS15}. For $r=7$, we get $[K:\Q]=3$ and hence $E$ is modular over $K$ by  \cite[Theorem 4]{DNS20}. 
		For $r\geq 11$, the field $K:= \Q(\zeta_r+ \zeta_r^{-1})$ is a finite abelian extension of $\Q$ which is unramified at primes away $r$ and hence $K$ is unramified at primes $3,5,7$. Thus by \cite[Theorem 1.2]{Y18}, $E$ is modular over $K$.
	\end{proof} 
	\subsection{Irreducibility of the mod $p$ Galois representations}
	Let $F$ be a number field and let $E/F$ be an elliptic curve defined over $F$. For any rational prime $p \geq 2$, let $E[p]:=\{P \in E(\bar{K}):\ [p]P=O\}$ be the set of all $p$-torsion points of $E$. Let
	$$\bar{\rho}_{E,p} : G_F:=\Gal(\bar{F}/F) \rightarrow \mathrm{Aut}(E[p]) \simeq \GL_2(\F_p)$$
	be the mod $p$ Galois representation of the absolute Galois group $G_F$, induced by the action of $G_F$ on $E[p]$.
	In this subsection, we discuss the irreducibility of the mod $p$ Galois representations $\bar{\rho}_{E,p}$ for large primes $p$. The following theorem gives a criterion for determining the irreducibility of mod $p$ Galois representations $\bar{\rho}_{E,p}$. More precisely,
	
	\begin{thm} \rm(\cite[Theorem 2]{FS15 Irred})
		\label{irreducibility of mod $P$ representation}
		Let $F$ be a totally real Galois field. Then there exists an effective constant $D_F>0$ (depending on $F$) such that if $p>D_F$ is a prime and $E/F$ is an elliptic curve over $F$ which is semistable at all primes $\mfp$ of $F$ with $\mfp |p$, then $\bar{\rho}_{E,p}$ is irreducible.
	\end{thm}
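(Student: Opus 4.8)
The plan is to argue by contradiction along the Momose--David line of reasoning for isogeny characters. Suppose $p > D_F$, with $D_F$ to be pinned down effectively, and suppose $\bar{\rho}_{E,p}$ is reducible. Then $E[p]$ contains a $G_F$-stable line, so in a suitable basis
\[ \bar{\rho}_{E,p} \sim \begin{pmatrix} \theta & \ast \\ 0 & \theta' \end{pmatrix}, \]
for characters $\theta,\theta'\colon G_F \to \F_p^\ast$. Since $\det\bar{\rho}_{E,p}=\chi_p$, the mod $p$ cyclotomic character, one has the basic relation $\theta\theta'=\chi_p$. The whole argument reduces to showing that for $p$ large in an effective sense no such isogeny character $\theta$ can exist.

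First I would determine the local behaviour of $\theta$ at the primes $\mfp\mid p$; this is exactly where the semistability hypothesis is used. For $\mfp\mid p$ the restriction $\bar{\rho}_{E,p}|_{G_{F_\mfp}}$ is either ordinary (good ordinary or multiplicative) or good supersingular; in the supersingular case $\bar{\rho}_{E,p}|_{I_\mfp}$ is given by the level-$2$ fundamental characters and is already irreducible, which is incompatible with the global reducibility, so that case is discarded. In the remaining ordinary case Serre's description of the tame inertia weights of a semistable (finite flat) representation forces $\theta|_{I_\mfp}\in\{1,\chi_p|_{I_\mfp}\}$, i.e.\ there is $\epsilon_\mfp\in\{0,1\}$ with $\theta|_{I_\mfp}=\chi_p^{\epsilon_\mfp}|_{I_\mfp}$. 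Away from $p$ I would use that every elliptic curve attains semistable reduction over a tame extension of ramification degree dividing $12$; hence $\theta^{12}$ is unramified at every $\mfq\nmid p$, even though $E$ is not assumed semistable there. Consequently $\theta^{12}$, after twisting by a suitable power of $\chi_p$ that trivializes it on each $I_\mfp$, is unramified everywhere, so it factors through $\Cl(F)$ and has order dividing $h_F$.

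Next I would globalize using the hypothesis that $F/\Q$ is Galois. Raising to the power $h_F$ kills the class-group part and yields a clean identity $\theta^{12h_F}=\chi_p^{m}$ on $G_F$, where the integer $m$ is built only from the $\epsilon_\mfp$ and the residue degrees, so $0\le m\le 12h_F[F:\Q]$. The Galois hypothesis makes this uniform over conjugates: for $\sigma\in\Gal(F/\Q)$ the curve $E^\sigma$ carries isogeny character $\theta^\sigma$, and the product $\prod_\sigma(\theta^\sigma)^{12h_F}$ is $\Gal(F/\Q)$-invariant, hence descends and equals a fixed power of $\chi_p$ determined only by $F$ and the choice of the $\epsilon_\mfp$. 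This leaves only finitely many possible "types'' for $\theta$, indexed by the $2^{\#\{\mfp\mid p\}}$ choices of exponents.

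Finally I would extract the effective bound by evaluating the resulting identity at Frobenius elements. Fix a rational prime $\ell\neq p$ unramified in $F$, pick a prime $\mathfrak{l}\mid\ell$ of good reduction, and compare $a_{\mathfrak{l}}(E)\equiv\theta(\Frob_{\mathfrak{l}})+\theta'(\Frob_{\mathfrak{l}})\pmod p$ with the now-explicit value of $\theta(\Frob_{\mathfrak{l}})$ coming from $\theta^{12h_F}=\chi_p^{m}$. Using Hasse's bound $|a_{\mathfrak{l}}(E)|\le 2\sqrt{\mathrm{Norm}(\mathfrak{l})}$, each type produces a nonzero integer $R_\ell$ depending only on $\ell$ and $F$ (essentially $\mathrm{Norm}(\mathfrak{l})^{c}\pm 1$ against finitely many roots of unity) that must be divisible by $p$; setting $D_F$ to exceed the largest prime factor of the product of these $R_\ell$ over the finitely many types yields the contradiction. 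The main obstacle is the local step at $\mfp\mid p$: ruling out the supersingular case and justifying $\theta|_{I_\mfp}\in\{1,\chi_p|_{I_\mfp}\}$ requires the full classification of semistable representations of the local Galois group, and keeping the Galois descent effective---guaranteeing that some explicit auxiliary $\ell$ gives $R_\ell\neq 0$---is the delicate point that makes $D_F$ genuinely computable rather than merely existent.
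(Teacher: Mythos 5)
This statement is imported by the paper from \cite[Theorem 2]{FS15 Irred} without proof, so your attempt has to be measured against Freitas--Siksek's actual argument, which is indeed the Momose--David isogeny-character method you outline; your skeleton (Borel characters $\theta,\theta'$ with $\theta\theta'=\chi_p$, ruling out the supersingular case, $\theta|_{I_\mfp}=\chi_p^{\epsilon_\mfp}|_{I_\mfp}$ with $\epsilon_\mfp\in\{0,1\}$, and $\theta^{12}$ unramified away from $p$) is correct up to that point. The genuine gap comes next: your claim that ``$\theta^{12}$, after twisting by a suitable power of $\chi_p$ that trivializes it on each $I_\mfp$, is unramified everywhere'' is false unless all the exponents $\epsilon_\mfp$ coincide. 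A global twist by $\chi_p^{-m}$ changes the restriction to \emph{every} $I_\mfp$ by the same power $m$, so killing the ramification at all $\mfp\mid p$ simultaneously forces $\epsilon_\mfp=m$ for every $\mfp$ (for $p$ large, $\chi_p|_{I_\mfp}$ has order at least $(p-1)/e(\mfp/p)$). Mixed signatures genuinely occur --- e.g.\ ordinary reduction where the $G_F$-stable line is the kernel of reduction at one prime above $p$ and a complement of it at another --- and in that case there is no identity $\theta^{12h_F}=\chi_p^{m}$ at all; since your final step consists precisely of evaluating that identity at Frobenius elements, the argument collapses there. The product identity $\prod_\sigma(\theta^\sigma)^{12h_F}=\chi_p^{12sh_F}$ \emph{is} valid (and is the correct use of the Galois hypothesis; note that Galois-invariance does not literally give descent to $G_\Q$, but descent is not needed --- constancy of the signature plus the class group suffices), yet it cannot rescue the endgame: in the balanced case where $s$ equals half the relevant degree, every Weil number has modulus $\mathrm{Norm}(\mathfrak{l})^{1/2}$, so both sides of the resulting congruence have the same absolute value in every embedding, the integer $R_\ell$ can vanish identically, and no choice of auxiliary $\ell$ yields an effective bound.

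The missing idea --- and the heart of Freitas--Siksek's proof --- is Momose's unit argument, which handles exactly the mixed (including balanced) signatures. Viewing $\theta^{12}$ via class field theory as a character of the idele class group and evaluating it on the principal idele of a global unit $u\in\mcO_F^\ast$ gives $\prod_\sigma \sigma(u)^{12 s_\sigma}\equiv 1$ modulo a prime above $p$, where the exponents $s_\sigma\in\{0,1\}$ are constant on the packet of embeddings inducing a given $\mfp\mid p$. If the signature is mixed, Dirichlet's unit theorem produces an effectively computable unit $u$ with $\prod_\sigma \sigma(u)^{12 s_\sigma}\neq \pm 1$: a functional $\sum_{\sigma\in S}\log|\sigma(u)|$ vanishing on the whole unit lattice must be proportional to $\sum_\sigma \log|\sigma(u)|$, which forces $S=\emptyset$ or $S$ equal to all embeddings. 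Hence $p$ divides the nonzero integer $N_{L/\Q}\bigl(\prod_\sigma\sigma(u)^{12 s_\sigma}-1\bigr)$, an effective bound. Only the two constant signatures remain, and those are exactly where your class-group argument is sound: $\theta$ (or $\theta'=\chi_p\theta^{-1}$) has $\theta^{12}$ unramified everywhere, hence order dividing $12h_F$, and the auxiliary-prime computation with the Hasse bound then works because the two sides have genuinely different absolute values. One further repair: you should not pick the auxiliary prime ``of good reduction,'' since that choice depends on $E$ and destroys uniformity of $D_F$; instead one uses $\theta^{12}(\Frob_{\mathfrak{l}})\equiv\beta^{12}$ for a Weil number $\beta$ of the potentially semistable fibre, so that a prime $\mathfrak{l}$ fixed in advance works for every curve.
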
	
	
	\subsection{Level lowering}
	Let $f$ be any Hilbert modular newform defined over a totally real number field $F$ of parallel weight $2$, level $\mfn$ with coefficient field $\Q_f$.
	For any non-zero prime ideal $\lambda$ of $\mcO_{\Q_f}$, let $\bar{\rho}_{f, \lambda}: G_F \rightarrow \GL_2(\F_\lambda)$ be the residual Galois representation attached to $f, \lambda$.
	We now recall a standard level-lowering result of Freitas and Siksek from ~\cite{FS15}, which follows from Jarvis \cite{J04}, Fujiwara \cite{F06} and Rajaei \cite{R01}.
	\begin{thm} \rm(\cite[Theorem 7]{FS15})
		\label{level lowering of mod $p$ repr}
		%	Let $K$ be a totally real field and 
		Let $E$ be an elliptic curve defined over a totally real number field $F$ of conductor $\mfn$. Let $p\geq 2$ be a rational prime and $\mfn_p$ as in \eqref{conductor of elliptic curve}. Suppose that the following conditions hold:
		\begin{enumerate}
			\item  For $p \geq 5$, the ramification index $e(\mfp /p) < p-1$ for all prime $\mfp |p$, and $\Q(\zeta_p)^+ \nsubseteq F$;
			\item $E/F$ is modular;
			\item $\bar{\rho}_{E,p}$ is irreducible;
			\item $E$ is semistable at all $\mfp |p$;
			\item $p| v_\mfp(\Delta_\mfp)$ for all $\mfp |p$.
		\end{enumerate}
		Then there exists a Hilbert modular newform $f$ defined over $F$ of parallel weight $2$, level $\mfn_p$, and some prime ideal $\lambda|p$ of $\mcO_{\Q_f}$ such that $\bar{\rho}_{E,p} \sim \bar{\rho}_{f,\lambda}$.
	\end{thm}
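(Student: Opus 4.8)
The plan is to exhibit the statement as a combination of the input modularity hypothesis with the Hilbert modular analogues of Ribet's level-lowering theorem, so that the substantive work reduces to checking that the local conditions demanded by those theorems are guaranteed by hypotheses (1)--(5). First I would use condition (2): since $E/F$ is modular, there is a Hilbert modular newform $g$ over $F$ of parallel weight $2$ and level equal to the conductor $\mfn$ of $E$, together with a prime $\lambda \mid p$ of its coefficient field, such that $\bar\rho_{E,p} \sim \bar\rho_{g,\lambda}$. This provides a modular source for $\bar\rho_{E,p}$ at the full level $\mfn$, and the remaining task is to descend the level from $\mfn$ to $\mfn_p = \mfn/\mfm_p$.

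The heart of the argument is the local analysis at the primes $\mfq \parallel \mfn$ with $p \mid v_\mfq(\Delta_\mfq)$, that is, the primes dividing $\mfm_p$. At each such $\mfq$ the curve $E$ has multiplicative reduction, so by the theory of the Tate curve the restriction $\bar\rho_{E,p}|_{G_{F_\mfq}}$ is, up to an unramified twist, of the shape $\psmat{\chi}{\ast}{0}{1}$ with $\chi$ the mod $p$ cyclotomic character, and the ramification of the extension class is controlled by $v_\mfq(\Delta_\mfq) \bmod p$. The divisibility $p \mid v_\mfq(\Delta_\mfq)$ forces this class to vanish, so $\bar\rho_{E,p}$ is finite flat (equivalently, unramified up to twist) at $\mfq$; this is precisely the input that permits $\mfq$ to be stripped from the level.

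With this established, I would invoke the Hilbert modular level-lowering results of Rajaei \cite{R01}, Jarvis \cite{J04} and Fujiwara \cite{F06} (assembled as in \cite{FS15}), removing the primes $\mfq \mid \mfm_p$ from the level one at a time via Mazur's principle and the Jacquet--Langlands correspondence, thereby descending from level $\mfn$ to level $\mfn_p$ and producing the required newform $f$ with $\bar\rho_{f,\lambda} \sim \bar\rho_{E,p}$. The role of the remaining hypotheses is to license this machinery: irreducibility of $\bar\rho_{E,p}$ (condition (3)) excludes the Eisenstein and reducible exceptional cases; semistability at the primes above $p$ (condition (4)) together with $p \mid v_\mfp(\Delta_\mfp)$ (condition (5)) guarantees that $\bar\rho_{E,p}$ is finite flat at every $\mfp \mid p$, so that no weight or level change is forced there; and the two arithmetic conditions in (1), namely $e(\mfp/p) < p-1$ and $\Q(\zeta_p)^+ \nsubseteq F$, are exactly what place the local representation at $p$ in the Fontaine--Laffaille range and secure the bigness of the residual image needed for the Taylor--Wiles patching underlying \cite{F06}.

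The main obstacle I anticipate is not any single computation but the bookkeeping at the primes above $p$: one must verify that conditions (1), (4) and (5) together pin down the finite-flat local condition, so that the weight-$2$, level-$\mfn_p$ newform produced genuinely satisfies $\bar\rho_{f,\lambda} \sim \bar\rho_{E,p}$, with no accidental companion-form or unramified twist slipping in. Once the cited level-lowering theorems are granted, everything else is a direct verification of their hypotheses.
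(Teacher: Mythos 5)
The paper does not prove this statement at all --- it is imported verbatim as \cite[Theorem 7]{FS15}, with the attribution that it follows from Jarvis \cite{J04}, Fujiwara \cite{F06} and Rajaei \cite{R01} --- and your sketch reconstructs precisely that route: modularity supplies a newform of level $\mfn$, and the cited Hilbert level-lowering theorems (Mazur's principle, Jacquet--Langlands) strip the primes dividing $\mfm_p$, with hypotheses (1), (3), (4), (5) licensing the machinery. So your proposal matches the intended proof; the one imprecision worth fixing is that at primes $\mfq \nmid p$ the condition $p \mid v_\mfq(\Delta_\mfq)$ yields unramifiedness of $\bar{\rho}_{E,p}$ at $\mfq$ up to unramified quadratic twist, whereas ``finite flat'' is the correct local condition only at the primes $\mfp \mid p$.
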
	
	
	\subsection{Eichler-Shimura}
	We now state the Eichler-Shimura conjecture.
	\begin{conj}[Eichler-Shimura]
		\label{ES conj}
		Let $F$ be a totally real number field. Let $f$ be a Hilbert modular newform defined over $F$ of parallel weight $2$, level $\mfn$, and with coefficient field $\Q_f= \Q$. Then, there exists an elliptic curve $E_f /F$ with conductor $\mfn$ having the same $L$-function as $f$ over $F$.
	\end{conj}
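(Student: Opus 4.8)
The plan is to attack the conjecture via the standard two-step strategy for realizing automorphic Galois representations geometrically. For the first step I would invoke the construction of the associated Galois representations, due in this setting to Carayol, Taylor, Blasius--Rogawski and Wiles: to a Hilbert newform $f$ over $F$ of parallel weight $2$ with $\Q_f=\Q$ one attaches a compatible system of two-dimensional representations $\rho_{f,\ell}\colon G_F\to\GL_2(\Q_\ell)$, unramified outside $\mfn\ell$, whose Frobenius characteristic polynomial at each $\mfq\nmid\mfn$ is $T^2-a_\mfq(f)\,T+N\mfq$. This already fixes the hypothetical $L$-function and conductor, so the entire content of the conjecture is to exhibit an \emph{elliptic curve} $E_f/F$ whose $\ell$-adic Tate modules recover $\{\rho_{f,\ell}\}$.

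For the second step I would distinguish by the parity of $[F:\Q]$. When $[F:\Q]$ is odd, the number of archimedean places is odd, so a quaternion algebra $B/F$ ramified at all but one infinite place and split at every finite place exists; transferring $f$ to $B^\times$ by Jacquet--Langlands and cutting the $f$-isotypic piece out of the Jacobian of the associated Shimura curve $X_B/F$ produces an abelian variety $A_f$ of $\GL_2$-type with $\Q_f\hookrightarrow\mathrm{End}^0(A_f)$. The same works in even degree \emph{provided} $f$ is square-integrable (Steinberg, or supercuspidal) at some finite prime $\mfq_0$: one takes $B$ ramified at all but one infinite place together with $\mfq_0$, an even set of places, and again cuts out $A_f$. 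In either case $\Q_f=\Q$ forces $\dim A_f\le 2$, and I would then have to verify that $A_f$ carries \emph{no} extra quaternionic multiplication --- otherwise it is a \emph{fake elliptic curve}, a QM abelian surface sharing the rational $L$-function of $f$. Ruling this out (equivalently, showing the relevant endomorphism quaternion algebra is split) from the local behaviour of $f$ is the delicate point that produces an honest one-dimensional $E_f$ of conductor $\mfn$.

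The hard part, and the reason the statement as written is a genuine conjecture rather than a theorem, is the complementary case in which $[F:\Q]$ is even and $f$ is a principal series at \emph{every} finite place, hence square-integrable nowhere. Then Jacquet--Langlands forbids transferring $f$ to any division quaternion algebra, while an algebra ramified only at infinite places would need an even number of ramified archimedean places and so cannot be split at exactly one of them; thus no Shimura \emph{curve} is available and the Jacobian construction simply does not apply. To reach $E_f$ here one would instead have to characterize the motive of $\{\rho_{f,\ell}\}$ intrinsically --- say, by proving through a Fontaine--Mazur/potential-automorphy argument that the system is motivic with Hodge--Tate weights $\{0,1\}$ and that the resulting rank-two motive is the $H^1$ of a genus-one curve --- and no such construction is presently known. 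This even-degree, non-square-integrable case is precisely the obstacle I do not expect to remove; accordingly I would either adjoin the hypothesis that $[F:\Q]$ is odd or that $f$ is Steinberg at some prime, or else record this remaining case as the open heart of the Eichler--Shimura conjecture.
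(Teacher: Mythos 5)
You have correctly diagnosed the situation: the statement is labelled a \emph{conjecture} in the paper and is genuinely open, so there is no proof of it in the paper for your attempt to be measured against. What the paper does is (i) record, immediately after Conjecture~\ref{ES conj}, that the conjecture is known when $[F:\Q]$ is odd or when $v_\mfq(\mfn)=1$ for some prime $\mfq$, citing \cite[Theorem 7.7]{D04} --- which is exactly the dichotomy your Jacquet--Langlands/Shimura-curve outline reproduces --- and (ii) sidestep the open case entirely in applications by invoking the Freitas--Siksek partial result, Theorem~\ref{FS partial result of E-S conj} (\cite[Corollary 2.2]{FS15}), which produces the elliptic curve $E_f$ not from local hypotheses on $f$ but from mod-$p$ hypotheses on an elliptic curve $E$ with $\bar{\rho}_{E,p}\sim\bar{\rho}_{f,p}$: potential multiplicative reduction at some $\mfq\nmid p$, $p\mid \#\bar{\rho}_{E,p}(I_\mfq)$, and $p\nmid(\mathrm{Norm}(F/\Q)(\mfq)\pm 1)$. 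This is the route the paper actually travels (via Lemma~\ref{reduction on T and S} at the primes above $2$), and it is worth noting that it evades precisely the even-degree, everywhere-principal-series obstruction you identify, because the Frey curves here are forced to be potentially multiplicative at $\mfP\in S_{K,2}$.

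Two small corrections to your sketch. First, in the even-degree case the Jacquet--Langlands transfer needs $f$ discrete series at some finite place, but your parenthetical ``Steinberg, or supercuspidal'' conflates two different issues: supercuspidality at $\mfq_0$ does let you transfer to a Shimura curve and cut out $A_f$ of $\GL_2$-type, but it does \emph{not} by itself rule out the fake elliptic curve, since a QM abelian surface has potentially good reduction everywhere; it is the Steinberg case $v_{\mfq_0}(\mfn)=1$ (multiplicative reduction) that splits the endomorphism quaternion algebra, which is why the paper's quoted condition from \cite{D04} is $v_\mfq(\mfn)=1$ rather than mere square-integrability. Second, with $\Q_f=\Q$ the $f$-isotypic piece has dimension $1$ or $2$ (elliptic curve or fake elliptic curve), as you say, but the fake surface has $L(A,s)=L(f,s)^2$; it ``shares the $L$-function of $f$'' only through the two-dimensional factor of its four-dimensional $H^1$. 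With those caveats, your assessment --- prove the known cases via Shimura curves, and record the even-degree nowhere-square-integrable case as the open heart of the conjecture --- is accurate and consistent with how the paper treats the statement.
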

	The above conjecture is true over all totally real number fields $F$ with either 
	$[F: \Q] $ is odd or there exists some prime ideal $\mfq$ of $\mcO_F$ such that $v_\mfq(\mfn) = 1$ (cf. ~\cite[Theorem 7.7]{D04}).
	In \cite{FS15}, Freitas and Siksek provided a partial answer to Conjecture~\ref{ES conj} in terms of mod $p$ Galois representations $\bar{\rho}_{E,p}$. More precisely,
	\begin{thm} \rm(\cite[Corollary 2.2]{FS15})
		\label{FS partial result of E-S conj}
		Let $E$ be an elliptic curve defined over a totally real number field $F$ and $p$ be an odd prime.
		Suppose that $\bar{\rho}_{E,p}$ is irreducible and $\bar{\rho}_{E,p} \sim \bar{\rho}_{f,p}$ for some Hilbert modular newform $f$ defined over $F$ of parallel weight $2$ and level
		$\mfn$ with rational eigenvalues.
		Let $\mfq \in P_F$ with $\mfq \nmid p$ be such that
		\begin{enumerate}
			\item E has potential multiplicative reduction at $\mfq$ (i.e., $v_\mfq(j_E) <0$);
			\item $p| \# \bar{\rho}_{E,p}(I_\mfq)$;
			\item  $p \nmid \left(\mathrm{Norm}(F/\Q)(\mfq) \pm 1\right)$.
		\end{enumerate}
		Then there exists an elliptic curve $E_f /F$ of conductor $\mfn$ having the same $L$-function as $f$.
	\end{thm}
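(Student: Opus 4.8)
The plan is to reduce the assertion to the cases of the Eichler--Shimura conjecture already known and recorded just after Conjecture~\ref{ES conj}, namely \cite[Theorem~7.7]{D04}: the elliptic curve $E_f/F$ of conductor $\mfn$ exists whenever $[F:\Q]$ is odd, or whenever some prime divides $\mfn$ exactly once. If $[F:\Q]$ is odd there is nothing to prove, so I would concentrate on producing a finite place at which the automorphic representation $\pi_f$ attached to $f$ is square-integrable; concretely the goal becomes to show that $\pi_{f,\mfq}$ is a (possibly twisted) Steinberg representation, which is exactly the local input that \cite[Theorem~7.7]{D04} converts, via Jacquet--Langlands and the geometry of the associated Shimura curve, into the desired $E_f$.

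First I would transport the local ramification data from $E$ to $f$. Condition~(1) says $E$ has potential multiplicative reduction at $\mfq$, so on the Tate parametrisation $\bar{\rho}_{E,p}\vert_{I_\mfq}$ is, up to a quadratic character, of the form $\psmat{1}{\ast}{0}{1}$; condition~(2), $p\mid\#\bar{\rho}_{E,p}(I_\mfq)$, forces the off-diagonal entry to be nonzero, so the inertia image contains a nontrivial unipotent element, necessarily of order $p$. Because $\bar{\rho}_{E,p}\sim\bar{\rho}_{f,p}$ (the irreducibility hypothesis making this residual isomorphism unambiguous), the image $\bar{\rho}_{f,p}(I_\mfq)$ likewise contains a unipotent of order $p$; in particular $\bar{\rho}_{f,p}$ is ramified at $\mfq$.

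Next I would read off the local type $\pi_{f,\mfq}$ from this. It is not unramified, since $\bar{\rho}_{f,p}$ ramifies at $\mfq$. In a ramified principal series the inertia acts through a pair of characters whose prime-to-$p$ order divides $\mathrm{Norm}(\mfq)-1$, and in the supercuspidal case through a character whose order divides $\mathrm{Norm}(\mfq)^2-1=(\mathrm{Norm}(\mfq)-1)(\mathrm{Norm}(\mfq)+1)$; in both cases the inertia image is semisimple of order prime to $p$ unless $p\mid\mathrm{Norm}(\mfq)\pm 1$, which condition~(3) excludes. Since a semisimple image of order prime to $p$ cannot contain a nontrivial unipotent, these cases are ruled out, and $\pi_{f,\mfq}$ must be special, i.e.\ a twist of Steinberg. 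This is precisely the square-integrability needed for \cite[Theorem~7.7]{D04}: when the Steinberg twist is unramified one has $v_\mfq(\mfn)=1$ directly, and when it is (quadratic) ramified one first twists $f$ by the corresponding character to reach a conductor-exponent-one place, applies \cite[Theorem~7.7]{D04}, and untwists the resulting curve to obtain $E_f/F$ of conductor $\mfn$ with $L(E_f,s)=L(f,s)$.

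The step I expect to be hardest is the local analysis of $\pi_{f,\mfq}$: one must treat every ramified type uniformly, in particular controlling wildly ramified supercuspidals at primes $\mfq$ of small residue characteristic, and verify that the prime-to-$p$ order of $\bar{\rho}_{f,p}(I_\mfq)$ is governed exactly by the factors $\mathrm{Norm}(\mfq)\pm 1$, so that condition~(3) performs the elimination cleanly; the attendant twisting bookkeeping needed to match the exact conductor hypothesis of \cite[Theorem~7.7]{D04} is the other delicate point. By contrast the reduction in the first step to \cite[Theorem~7.7]{D04} is formal.
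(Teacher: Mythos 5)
The paper never proves this statement: it is imported verbatim from \cite[Corollary 2.2]{FS15}, so the only meaningful comparison is with Freitas--Siksek's original argument, which your proposal essentially reconstructs, and correctly so. Your route is theirs: hypotheses (1)--(2) and the Tate parametrisation place an element of order $p$ (a nontrivial unipotent up to quadratic twist) in $\bar{\rho}_{E,p}(I_\mfq)=\bar{\rho}_{f,p}(I_\mfq)$; local--global compatibility (Carayol) together with hypothesis (3), $\mfq\nmid p$ and $p$ odd then eliminates the principal series and supercuspidal possibilities for the local component $\pi_\mfq$ of $f$; hence $\pi_\mfq$ is special, and one concludes by the known cases of Eichler--Shimura. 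One structural remark: the twist--untwist step you add is genuinely necessary in your set-up, because the hypotheses do allow $v_\mfq(\mfn)\geq 2$ (for instance when $E$ has additive, potentially multiplicative reduction at $\mfq$, the residual representation has no $I_\mfq$-invariants, so every newform giving rise to it has $v_\mfq(\mfn)\geq 2$), and then the form of Darmon's theorem quoted in this paper ($v_\mfq(\mfn)=1$) cannot be applied at $\mfq$ directly. Freitas--Siksek instead absorb this case into the Eichler--Shimura input they cite, which applies to any discrete-series (in particular any twist-of-Steinberg) local component; your explicit twisting argument is the correct substitute.

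Two details you flag but leave open both go through, and should be recorded. (a) Quadraticity of the ramified twisting character $\chi$ of $\pi_\mfq$: since the mod-$p$ cyclotomic character is unramified at $\mfq$, the inertial determinant of $\bar{\rho}_{E,p}$ is square-trivial, so comparing with $\bar{\rho}_{f,p}|_{I_\mfq}$ gives $\bar{\chi}^2|_{I_\mfq}=1$; hypothesis (3) and $\mfq\nmid p$ make reduction mod $p$ injective on $\chi(I_\mfq)$ (its order divides $(\mathrm{Norm}(\mfq)-1)\mathrm{Norm}(\mfq)^{c-1}$, prime to $p$), so $\chi|_{I_\mfq}$ is itself quadratic and extends to a global quadratic character, as your untwisting requires. (b) In the supercuspidal elimination, wild inertia contributes only powers of the residue characteristic (prime to $p$), the dihedral normaliser only a factor $2$ (prime to $p$), and the exceptional non-dihedral supercuspidals exist only in residue characteristic $2$, where the projective inertia image has order dividing $24$; these could interfere only for $p=3$, but when $\mathrm{Norm}(\mfq)$ is a power of $2$ one always has $3\mid\mathrm{Norm}(\mfq)^2-1$, so hypothesis (3) already excludes $p=3$ there. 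With these two points filled in, your proof is complete.
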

	
	\subsection{Image of inertia}
	The following result is very useful for determining the reduction types of the Frey curve $E$ at the primes of $\mcO_K$ lying above $2$. 
	\begin{lem}\cite[Lemma 3.4]{FS15}
		\label{criteria for potentially multiplicative reduction}
		Let $F$ be a totally real number field and $p\geq 5$ be a rational prime. Let $E$ be an elliptic curve defined over $F$. For $\mfq \in P_F$ with $\mfq \nmid p$, $E$ has potential multiplicative reduction at $\mfq$ (i.e., $v_\mfq(j_E) <0$) and $p \nmid v_\mfq(j_E)$ if and only if $p | \# \bar{\rho}_{E,p}(I_\mfq)$.
	\end{lem}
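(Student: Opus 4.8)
The plan is to prove the biconditional by splitting into the possible reduction types of $E$ at $\mfq$ according to the sign of $v_\mfq(j_E)$, and in each case to compute or bound the image of inertia $\bar{\rho}_{E,p}(I_\mfq)$ directly. Throughout, let $\ell$ be the rational prime below $\mfq$; since $\mfq \nmid p$ we have $\ell \neq p$, so $\mu_p$ is unramified at $\mfq$ and all ramification appearing in the $p$-torsion is tame. This is the key simplification that makes the image of inertia tractable.

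First I would treat the potential good reduction case, i.e. $v_\mfq(j_E) \geq 0$. Here the left-hand condition fails, so it suffices to show $p \nmid \#\bar{\rho}_{E,p}(I_\mfq)$. By the semistable reduction theorem, $E$ acquires good reduction over a finite extension $L/F_\mfq$, and $I_\mfq$ acts on $E[p]$ through the finite quotient $\Gal(L/F_\mfq^{\mathrm{unr}})$. Its tame part is cyclic of order in $\{1,2,3,4,6\}$, and any wild part is an $\ell$-group; since $p \geq 5$ and $p \neq \ell$, the order of $\bar{\rho}_{E,p}(I_\mfq)$ is coprime to $p$. Thus both sides of the equivalence are false, and the case is settled.

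The heart of the argument is the potential multiplicative case $v_\mfq(j_E) < 0$, which I would handle via the Tate parametrization. After an at-worst-quadratic unramified or ramified twist, $E$ becomes a Tate curve $E_q$ over $F_\mfq$ with $v_\mfq(q) = -v_\mfq(j_E)$, and $E[p] \simeq \langle \zeta_p, q^{1/p}\rangle$. Since $\zeta_p$ is unramified, in this basis $\bar{\rho}_{E_q,p}(I_\mfq)$ consists of unipotent matrices $\psmat{1}{*}{0}{1}$, and it is nontrivial, hence of order exactly $p$, precisely when $F_\mfq(q^{1/p})/F_\mfq$ is ramified, i.e. precisely when $p \nmid v_\mfq(q)$; since $v_\mfq(q) = -v_\mfq(j_E)$, this is equivalent to $p \nmid v_\mfq(j_E)$. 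Because the twisting character has order dividing $2$ and $p$ is odd, passing from $E_q$ to $E$ enlarges the inertia image by a group of order at most $2$ and leaves $j_E$ unchanged; hence $p \mid \#\bar{\rho}_{E,p}(I_\mfq)$ if and only if $p \nmid v_\mfq(j_E)$. Combined with the potential good reduction case, this yields the stated equivalence.

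The main obstacle I anticipate is the bookkeeping around the quadratic twist in the potential multiplicative case: one must verify that twisting by a ramified quadratic character can neither destroy nor manufacture $p$-divisibility in the inertia image, which rests on $p$ being odd and on the twist fixing the unipotent ($j$-invariant) part. The potential good reduction bound and the Tate-curve torsion computation are standard; the delicate point is keeping track of tame versus wild ramification (relevant only for $\ell \in \{2,3\}$) and confirming that in every subcase the order of $\bar{\rho}_{E,p}(I_\mfq)$ is divisible by $p$ exactly under the stated arithmetic condition on $v_\mfq(j_E)$.
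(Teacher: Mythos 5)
Your proposal is correct, and since the paper does not prove this lemma at all --- it is quoted directly from \cite{FS15} --- the right comparison is with the proof there, which yours reproduces in essence: potential good reduction forces the inertia image to have order dividing $24$ (hence prime to $p\geq 5$), while in the potentially multiplicative case the Tate parametrization gives a unipotent inertia image of order $p$ exactly when $p\nmid v_\mfq(q)=-v_\mfq(j_E)$, with the quadratic twist only perturbing the image by a factor of $2$. Your attention to the twist bookkeeping and to wild ramification for $\ell\in\{2,3\}$ is exactly the care the standard argument (via the theory of the Tate curve, cf.\ \cite{S94}) requires, so nothing is missing.
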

	
	%The following lemma is helpful for the types of the reduction of the Frey curve $E$ at $\mfP \in U_{K^+,2}$.
	%\begin{lem} \rm(\cite[Lemma 3.4]{FS15})
	%	\label{3 divides discriminant}
	%	Let $K$ be a totally real number field. Let $E/K$ be an elliptic curve. Let $p\geq 3$ be a prime and $\mfP\in P_K$ be a prime of $K$ lying above $2$. Suppose $E$ has potential good reduction at $\mfP$ (i.e., $v_\mfP(j_E)  \geq 0$). Then $3 \nmid v_\mfP(\Delta_E)$ if and only if $3 | \#\bar{\rho}_{E,p}(I_\mfP)$.
	%\end{lem}
	
	%	In the following lemma, we will determine the type of reduction of the Frey curve $E$ at primes $\mfP \in S_{K^+,2}$. 
	\begin{lem}
		\label{reduction on T and S}
		Let $\mfP \in S_{K,2}$. Let $(a,b,c)$ be a non-trivial primitive integer solution of the equation \eqref{2p,2q,r} with $p > \max\{ 2, v_\mfP(2) \}$, and let $E/K$ be the Frey curve given in \eqref{Frey curve for r not divide x} for $r \nmid a$ (respectively, in \eqref{Frey curve for r divide x} for $r|a$). Then $\ v_\mfP(j_E) < 0$ and $p \nmid v_\mfP(j_E)$, equivalently $p | \#\bar{\rho}_{E,p}(I_\mfP)$.
	\end{lem}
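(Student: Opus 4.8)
The statement splits into a valuation claim---that $v_\mfP(j_E)<0$ and $p\nmid v_\mfP(j_E)$---and its translation into the inertia statement $p\mid\#\bar\rho_{E,p}(I_\mfP)$. The latter is essentially free: since $\mfP\in S_{K,2}$ divides $2$ and $p>2$ we have $\mfP\nmid p$, so (with $p\geq 5$, as throughout) Lemma~\ref{criteria for potentially multiplicative reduction} applies and yields the equivalence once the two valuation conditions are established. The plan is therefore to compute $v_\mfP(j_E)$ explicitly, and uniformly in the two cases $r\nmid a$ and $r\mid a$.

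First I would put both Frey curves of \S\ref{section for Frey curve} on a common footing by setting $A=\alpha\beta_j$, $B=\beta\beta_k$ and $C=\gamma a_0^2$ (with $\alpha=1$ when $r\nmid a$). The formulas recorded there then read $c_4=2^4(A^2-BC)$ and $\Delta_E=2^4(ABC)^2$ in both cases, so $j_E=c_4^3/\Delta_E=2^8(A^2-BC)^3/(ABC)^2$. Next I would read off the $\mfP$-adic valuations from Lemma~\ref{AS Lem}: since $A\mcO_K=\mfb_j^p$ and $B\mcO_K=\mfb_k^p$ with $\mfb_j,\mfb_k$ coprime to $2$, both $A$ and $B$ are $\mfP$-units; and since $\gamma a_0^2\mcO_K$ equals $2^{2pn+2}a_1^{2p}\mcO_K$ (resp.\ $2^{2pn+2}\mfr^\delta a_1^{2p}\mcO_K$ when $r\mid a$), with $v_\mfP(a_1)=0$ and $v_\mfP(\mfr)=0$, we obtain $v_\mfP(C)=(2pn+2)v_\mfP(2)$. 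Crucially the two cases now merge, because the extra factor $\mfr^\delta$ in $C$ is invisible at a prime above $2$.

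With these valuations the computation is immediate. As $v_\mfP(A^2)=0$ while $v_\mfP(BC)=v_\mfP(C)>0$, the ultrametric inequality gives $v_\mfP(A^2-BC)=0$; hence $v_\mfP(j_E)=8v_\mfP(2)-2v_\mfP(ABC)=8v_\mfP(2)-2(2pn+2)v_\mfP(2)=-4(pn-1)v_\mfP(2)$. Since $a$ is even we have $n=v_2(a)\geq 1$, and $p>2$, so $pn-1>0$ and $v_\mfP(j_E)<0$ (consistent with the multiplicative reduction at $\mfP$ recorded in Theorem~\ref{semi stable red of Frey curve}). Reducing modulo $p$ then gives $v_\mfP(j_E)\equiv 4v_\mfP(2)\pmod p$.

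This last congruence is where the hypothesis does its work, and I regard it as the only step needing genuine care: I must rule out $p\mid 4v_\mfP(2)$. Here $p\nmid 4$ because $p\geq 5$, and $p\nmid v_\mfP(2)$ precisely because $0<v_\mfP(2)<p$ by the assumption $p>\max\{2,v_\mfP(2)\}$. Thus $v_\mfP(j_E)\not\equiv 0\pmod p$, i.e.\ $p\nmid v_\mfP(j_E)$, and the equivalence with $p\mid\#\bar\rho_{E,p}(I_\mfP)$ follows from Lemma~\ref{criteria for potentially multiplicative reduction}. The remaining verifications---that $A,B$ are genuinely $\mfP$-units and that $\mfb_j,\mfb_k,a_1$ are coprime to $2$---are routine and are exactly what Lemma~\ref{AS Lem} supplies; the entire argument is thus a single valuation computation whose only conceptual content is the tailoring of the bound $p>v_\mfP(2)$ to keep the residue $4v_\mfP(2)$ nonzero modulo $p$.
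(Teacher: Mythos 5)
Your proof is correct and follows essentially the same route as the paper: read off the $\mfP$-adic valuations from Lemma~\ref{AS Lem}, compute $v_\mfP(j_E)=4(1-pn)v_\mfP(2)$, and use $p>\max\{2,v_\mfP(2)\}$ together with Lemma~\ref{criteria for potentially multiplicative reduction}. The only cosmetic differences are that you unify the two cases via $A,B,C$ (the paper writes them out separately) and you deduce $v_\mfP(j_E)<0$ directly from the computation using $n=v_2(a)\geq 1$, whereas the paper cites the multiplicative reduction at $\mfP$ from Theorem~\ref{semi stable red of Frey curve}.
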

	
	\begin{proof}
		By Theorem~\ref{semi stable red of Frey curve}, the Frey curve $E/K$ has multiplicative reduction at $\mfP$ and hence $v_\mfP(j_E) <0$.
		
		If $r \nmid a$, then in this case we have $j_E=2^8 \frac{(\beta_j^2-\beta \beta_k \gamma  a_0^2)^3}{(\beta_j \beta \beta_k \gamma a_0^2)^2}$. Recall that by equation~\eqref{beta_j, bet_k, gamma, r not divide a}, we have $\beta_j \mcO_K= \mfb_j^p$, $\beta \beta_k \mcO_K= \mfb_k^p$ and $\gamma a_0^2\mcO_K= 2^{2pn+2}a_1^{2p}\mcO_K$, where $n=v_2(a)$. 
		By Lemma~\ref{AS Lem}(1), it follows that $a_1 \mcO_K$, $\mfb_j, \mfb_k$ are pairwise coprime ideals in $\mcO_K$ which are coprime to $2r$. Since $\mfP \in S_{K,2}$, we have $\mfP \nmid \mfb_j \mfb_k a_1 \mcO_K$. Hence $v_\mfP(j_E)=8 v_\mfP(2)-2 v_\mfP(\gamma a_0^2)=8 v_\mfP(2)-2 (2pn+2) v_\mfP(2)=4(1-pn)v_\mfP(2)$. Since $p > \max\{ 2, v_\mfP(2) \}$, we get $p \nmid v_\mfP(j_E)$.
		
		%		c_4=2^4(\alpha^2 \beta_j^2-\beta \gamma \beta_k a_0^2)$,  $\Delta_E=2^4(\alpha \beta\gamma \beta_j \beta_ka_0^2)^2$ and $j_E= \frac{c_4^3}{\Delta_E}$.
		If $r | a$, then in this case we have $j_E=2^8 \frac{(\alpha^2 \beta_j^2-\beta \beta_k \gamma a_0^2)^3}{(\alpha \beta_j \beta \beta_k \gamma a_0^2)^2}$. Recall that by equation~\eqref{beta_j, bet_k, gamma, r divide a}, we have
		$\alpha \beta_j \mcO_K= \mfb_j^p$, $\beta \beta_k \mcO_K= \mfb_k^p$ and  $\gamma a_0^2\mcO_K= 2^{2pn+2}\mfr^\delta a_1^{2p}\mcO_K$, where $\delta= (kp-1)(r-1)-1$. By Lemma~\ref{AS Lem}(2), it follows that $a_1 \mcO_K$, $\mfb_j, \mfb_k$ are pairwise coprime ideals in $\mcO_K$ which are coprime to $2r$. 
		Hence $v_\mfP(j_E)=8 v_\mfP(2)-2 v_\mfP(\gamma a_0^2) =8 v_\mfP(2)-2 (2pn+2) v_\mfP(2)=4(1-pn)v_\mfP(2)$. Since $p > \max\{ 2, v_\mfP(2) \}$, we get $p \nmid v_\mfP(j_E)$.
		
		Finally by Lemma~\ref{criteria for potentially multiplicative reduction}, we have $p | \#\bar{\rho}_{E,p}(I_\mfP)$.
	\end{proof}
	
	\section{Level-lowering and Eicher-Shimura}
	\label{proof of main results}
	In this section, we will prove Theorems~\ref{main result1 for (2p,2q,r)} and ~\ref{main result2 for (2p,2q,r)}. 
	First, we prove an auxiliary theorem which is a key ingredient in the proof of Theorems~\ref{main result1 for (2p,2q,r)} and~\ref{main result2 for (2p,2q,r)}.
	%	, and its proof follows closely from \cite[Theorem 9]{FS15} and \cite[Proposition 3.16]{JS25}.
	\begin{thm}
		\label{auxilary result for main result1}
		Fix $r\geq 5$ a rational prime. Let $K:= \Q(\zeta_r+ \zeta_r^{-1})$. Then, there is an effectively computable constant $V_{r}>0$ (depending on $r$) such that the following holds. Let $(a,b,c)$ be a non-trivial primitive integer solution of the equation $x^{2p}+y^{2q}=z^r$ with primes $p,q > V_r$, and let $E/K$ be the Frey curve given in \eqref{Frey curve for r not divide x} for $r \nmid a$ (respectively, in \eqref{Frey curve for r divide x} for $r|a$). Then, there exists an elliptic curve $E^\prime/K$ such that:
		\begin{enumerate}
			\item $E^\prime/K$ has good reduction away from $S_{K,2}$ if $r \nmid a$ (respectively,  $S_{K,2r}$ if $r |a$); 
			\item $E^\prime/K$ has full $2$-torsion points i.e. $E'(\bar{K})[2] \subset E'(K)$;
			\item $\bar{\rho}_{E,p} \sim \bar{\rho}_{E^\prime,p}$;
			\item  $v_\mfP(j_{E^\prime})<0$ for all $\mfP \in S_{K,2}$.
		\end{enumerate}
	\end{thm}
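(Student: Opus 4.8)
The plan is to run the four steps of the modular method set out in \S\ref{modular method}, feeding the outputs of Theorems~\ref{semi stable red of Frey curve}, \ref{modularity cycl}, \ref{irreducibility of mod $P$ representation}, \ref{level lowering of mod $p$ repr} and \ref{FS partial result of E-S conj} into one another, while recording at each stage the effective lower bound on the exponent that is required, so that $V_r$ emerges as the maximum of finitely many effective quantities. (Since the descent was normalized so that the \emph{even} base carries the exponent, the bound applies to that exponent; requiring both $p,q>V_r$ covers the case in which $b$ rather than $a$ is even.) First I would record that by Theorem~\ref{modularity cycl} the Frey curve $E/K$ is modular for all $p,q\geq 5$, and that by Theorem~\ref{semi stable red of Frey curve} it is semistable; feeding semistability at the primes above $p$ into Theorem~\ref{irreducibility of mod $P$ representation} yields an effective $D_K>0$ with $\bar\rho_{E,p}$ irreducible once $p>D_K$. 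I would then check the hypotheses of Theorem~\ref{level lowering of mod $p$ repr}: for $p>r$ the field $K$ is unramified at $p$, so $e(\mfp/p)=1<p-1$ and $\Q(\zeta_p)^+\not\subseteq K$; conditions (2)--(4) are the facts just recalled; and condition (5), namely $p\mid v_\mfp(\Delta_\mfp)$ for $\mfp\mid p$, is the minimality-plus-divisibility statement of Theorem~\ref{semi stable red of Frey curve}, valid because $\mfp\nmid 2r$. This produces a Hilbert newform $f$ over $K$ of parallel weight $2$ and level $\mfn_p$ (with $\mfn_p=2\mcO_K$ if $r\nmid a$ and $\mfn_p=2\mcO_K\mfr$ if $r\mid a$) and a prime $\lambda\mid p$ with $\bar\rho_{E,p}\sim\bar\rho_{f,\lambda}$.

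Next I would pass from $f$ to an elliptic curve. Because $\mfn_p$ is independent of the solution, the newforms of weight $2$ and level $\mfn_p$ form one fixed finite set; for each such $f$ with coefficient field strictly larger than $\Q$ I would fix a prime $\mfq\nmid 2r$ with $a_\mfq(f)\notin\Z$, and observe that $\bar\rho_{E,p}\sim\bar\rho_{f,\lambda}$ forces a congruence mod $\lambda$ between $a_\mfq(f)$ and an integer (either $a_\mfq(E)$, bounded by Hasse, or $\pm(\mathrm{Norm}(\mfq)+1)$ when $E$ is multiplicative at $\mfq$); since $a_\mfq(f)\notin\Z$, this congruence forces $p$ to divide a fixed nonzero integer, so only finitely many effectively bounded $p$ survive. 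Enlarging $V_r$ past these, the surviving $f$ has rational eigenvalues and $\bar\rho_{E,p}\sim\bar\rho_{f,p}$ over $\F_p$. I would then apply Theorem~\ref{FS partial result of E-S conj} with $\mfq=\mfP\in S_{K,2}$: Lemma~\ref{reduction on T and S} supplies hypotheses (1) and (2), namely $v_\mfP(j_E)<0$ and $p\mid\#\bar\rho_{E,p}(I_\mfP)$, while hypothesis (3), $p\nmid(\mathrm{Norm}(\mfP)\pm1)$, holds once $p>\mathrm{Norm}(\mfP)+1$ (a power of $2$ depending only on $r$). The output is an elliptic curve $E'=E_f/K$ of conductor $\mfn_p$ with the same $L$-function as $f$; this gives property~(1) (good reduction outside $S_{K,2}$, resp.\ $S_{K,2r}$), and since $\bar\rho_{E',p}\sim\bar\rho_{f,p}\sim\bar\rho_{E,p}$ it gives property~(3).

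Property~(4) would then be immediate: as $\bar\rho_{E',p}\sim\bar\rho_{E,p}$, we have $p\mid\#\bar\rho_{E,p}(I_\mfP)=\#\bar\rho_{E',p}(I_\mfP)$ for every $\mfP\in S_{K,2}$, so Lemma~\ref{criteria for potentially multiplicative reduction} forces $v_\mfP(j_{E'})<0$ at each such $\mfP$. Taking $V_r$ to be the maximum of the bounds accumulated above (from irreducibility, from $p>r$, from the rationality elimination, and from $p>\mathrm{Norm}(\mfP)+1$) keeps it effectively computable.

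The hard part will be property~(2), the full $2$-torsion of $E'$. The Frey curve $E$ has full $2$-torsion by construction (its model is $Y^2=X(X-A)(X+B)$), but this is mod-$2$ information about $E$, whereas everything transferred to $E'$ above is mod-$p$ information, so full rationality of $E'[2]$ does not follow formally from $\bar\rho_{E',p}\sim\bar\rho_{E,p}$. Following the technique of \cite{FS15}, I would instead analyze $\bar\rho_{E',2}$ directly: the $2$-division field $K(E'[2])/K$ is unramified outside $S_{K,2}$ (resp.\ $S_{K,2r}$) because $E'$ has good reduction there, and this restricted ramification, combined with the multiplicative reduction of $E'$ at the primes of $S_{K,2}$ coming from property~(4), is what one uses to pin down $\bar\rho_{E',2}$ and to conclude, possibly after replacing $E'$ by a $2$-isogenous curve in the same class, that $E'[2]\subset E'(K)$. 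This is precisely the step that makes contact with the $S$-unit equation, since a curve over $K$ with full $2$-torsion and good reduction outside $S$ can be put in Legendre form $Y^2=X(X-1)(X-\nu)$ with $\nu,\,1-\nu\in\mcO_S^{\ast}$; I expect this control of $\bar\rho_{E',2}$ (and the choice within the isogeny class) to be the most delicate point of the argument.
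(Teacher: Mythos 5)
Your handling of conclusions (1), (3) and (4) coincides with the paper's own proof: modularity for all $p,q\geq 5$ from Theorem~\ref{modularity cycl}, semistability and $p\mid v_\mfp(\Delta_\mfp)$ from Theorem~\ref{semi stable red of Frey curve}, irreducibility from Theorem~\ref{irreducibility of mod $P$ representation}, level lowering via Theorem~\ref{level lowering of mod $p$ repr} (with the same verification of its hypothesis (1) for $p>r$), elimination of newforms with irrational eigenvalues by the Mazur-type congruence argument (which you spell out and the paper simply cites as \cite[Proposition 15.4.2]{C07}, \cite[\S 4]{FS15}), then Eichler--Shimura via Theorem~\ref{FS partial result of E-S conj} with hypotheses supplied by Lemma~\ref{reduction on T and S}, and conclusion (4) via Lemma~\ref{criteria for potentially multiplicative reduction}. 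All of that is correct and keeps $V_r$ effective.

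The genuine gap is conclusion (2), which you yourself flag as unresolved. The paper gets full $2$-torsion by the \emph{same} trace-comparison mechanism it uses for rationality of eigenvalues, not by studying $\bar{\rho}_{E',2}$: writing $\F_\mfq:=\mcO_K/\mfq$, the full $2$-torsion of the Frey curve $E$ gives $4\mid \#E(\F_\mfq)$, i.e.\ $a_\mfq(E)\equiv \mathrm{Norm}(\mfq)+1 \pmod 4$, at every prime $\mfq$ of good reduction; combining $a_\mfq(E)\equiv a_\mfq(E_f)\pmod p$ with the Hasse bound forces $a_\mfq(E)=a_\mfq(E_f)$, hence $4\mid \#E_f(\F_\mfq)$, for all $\mfq$ of norm small compared to $p^2$. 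Since $E_f$ ranges over the finite set of elliptic curves of conductor $\mfn_p$ --- a set depending only on $r$, not on $p$ or on the solution --- one can enlarge $V_r$ by an effective amount so that every surviving $E_f$ satisfies this divisibility at enough primes to conclude, by the isogeny-class analysis in \cite[Proposition 15.4.2]{C07} and \cite[\S 4]{FS15}, that $E_f$ is isogenous to a curve $E'$ with full $2$-torsion; replacing $E_f$ by $E'$ preserves $\bar{\rho}_{\,\cdot\,,p}$ and the good-reduction statement (1). This finiteness-plus-Hasse transfer of mod-$4$ information is precisely the idea missing from your sketch.

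By contrast, the substitute you propose would not work as stated: the fact that $K(E'[2])/K$ is unramified outside $S_{K,2}$ (resp.\ $S_{K,2r}$) only constrains it to lie among the $S_3$- or $C_3$-extensions of $K$ unramified outside that set, and such nontrivial extensions can perfectly well occur as $2$-division fields of curves with good reduction outside $S$; nothing in your argument excludes them, and multiplicative reduction at $\mfP\mid 2$ does not force rational $2$-torsion either. (A smaller slip: the Legendre-form/$S$-unit translation you invoke at the end is not part of proving (2); it is how conclusions (1), (2), (4) are exploited afterwards, in the proofs of Theorems~\ref{main result1 for (2p,2q,r)} and~\ref{main result2 for (2p,2q,r)}.)
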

	\begin{proof}
		The main idea to prove this theorem is to apply the level-lowering theorem, i.e., Theorem~\ref{level lowering of mod $p$ repr}, to the Frey elliptic curve $E$ for large primes $p,q$ and the Eichler-Shimura result, i.e., Theorem~\ref{FS partial result of E-S conj}, to get such an elliptic curve $E'$ with desired properties. We now proceed step by step.
		
		First, taking $p > r$, we have $e(\mfp /p) < p-1$ for all prime ideals $\mfp$ of $\mcO_K$ with $\mfp |p$, and $\Q(\zeta_p)^+ \nsubseteq K$. By Theorem~\ref{modularity cycl}, the Frey curve $E$ is modular over $K$ for primes $p,q \geq 5$. By Theorem~\ref{semi stable red of Frey curve} with $p >2r$, the Frey curve $E/K$ is semistable at $\mfp$ and $p| v_\mfp(\Delta_\mfp)$ for all prime ideals $\mfp$ of $\mcO_K$ with $\mfp |p$. Since $K= \Q(\zeta_r+ \zeta_r^{-1})$ is totally real and Galois extension of $\Q$, by Theorem~\ref{irreducibility of mod $P$ representation}, we get $\bar{\rho}_{E,p}$ is irreducible for $p > D_r$, for some effective constant $D_r>0$ depending on $r$.
		
		Now, applying Theorem~\ref{level lowering of mod $p$ repr} to the Frey curve $E$ for primes $p,q > V_r:=\max\{2r, D_r\}$, there exists a Hilbert modular newform $f$ defined over $K$ of parallel weight $2$, level $\mfn_p$ and some prime $\lambda$ of $\Q_f$ such that $\lambda | p$ and $\bar{\rho}_{E,p} \sim \bar{\rho}_{f,\lambda}$. 
		After possibly enlarging $V_r$ by an effective amount, we can assume $\Q_f=\Q$. This step uses standard ideas, originally due to Mazur, which can be found in~\cite[Proposition 15.4.2]{C07} (cf.~\cite[\S 4]{FS15} for more details).
		
		%	 {\color{red}
			%	 If Conjecture~\ref{ES conj} holds}, then by Theorem~\ref{partial result to ES conj}, there exists an elliptic curve $E_f/K$ of conductor $\mfn_p$ having the same $L$-function as of $f$. If $T_K\neq \varphi$, then for any 
		Let $\mfP \in S_{K,2}$. By Lemma~\ref{reduction on T and S}, $E$ has potential multiplicative reduction at $\mfP$ i.e.,  $\ v_\mfP(j_E) < 0$ and $p | \#\bar{\rho}_{E,p}(I_\mfP)$
		for $p > \max\{ 2, v_\mfP(2) \}$. Now, by Theorem~\ref{FS partial result of E-S conj}, there exists an elliptic curve $E_f$ of conductor $\mfn_p$ such that $\bar{\rho}_{E,p} \sim \bar{\rho}_{E_f,p}$ after possibly enlarging $V_r$ by an effective amount to ensure that $p \nmid \left( \text{Norm}(K/\Q)(\mfP) \pm 1 \right)$ and $p > v_\mfP(2)$. Then $\bar{\rho}_{E,p} \sim \bar{\rho}_{E_f,p}$ for $p>V_r$.
		
		Since the conductor of $E_f$ is $\mfn_p$ given in \eqref{conductor of E for r not divide a} if $r \nmid a$ (respectively, in \eqref{conductor of E for r divide a} if $r|a$), $E_f$ has good reduction away from $S_{K,2}$ if $r \nmid a$ (respectively,  $S_{K,2r}$ if $r |a$). Clearly from the construction of the Frey curves in \eqref{Frey curve for r not divide x} and \eqref{Frey curve for r divide x}, the Frey curve $E/K$ has full $2$-torsion points given by  $\{O, (0,0), (0, \beta_j), (0,\beta \beta_k)\}$, where $\beta \in K$ as in \eqref{alpha, beta, gamma, r not divide a} and $\beta_j, \beta_k \in K$ as in \eqref{beta_j} if $r \nmid a$ (respectively, $\{O, (0,0), (0, \alpha \beta_j), (0,\beta \beta_k)\}$, where $\alpha, \beta \in K$ as in \eqref{alpha, beta, gamma, r divide a} if $r | a$).
		After enlarging $V_r$ by an effective amount and by possibly replacing $E_f$ with an isogenous curve, say $E^\prime$, we may assume that $E^\prime/ K$ has full $2$-torsion and  $\bar{\rho}_{E,p} \sim\bar{\rho}_{E^\prime,p}$. 
		This follows from~\cite[Proposition 15.4.2]{C07} and the fact that  $E/K$ has full $2$-torsion points (cf.~\cite[\S 4]{FS15} for more details). Finally, since $E_f$ is isogenous to $E^\prime$, it follows that $E^\prime$ has good reduction away from $S_{K,2}$ if $r \nmid a$ (respectively,  $S_{K,2r}$ if $r |a$).
		
		Now by Lemma~\ref{reduction on T and S}, we have $p |\# \bar{\rho}_{E,p}(I_\mfP)= \# \bar{\rho}_{E^\prime,p}(I_\mfP)$ for all $\mfP \in S_{K,2}$. So by Lemma~\ref{criteria for potentially multiplicative reduction}, we get $v_\mfP(j_{E^\prime})<0$.
		%\end{enumerate}
		This completes the proof of the theorem.
	\end{proof}
	%	For example, when $r=5,7$, we have $[K: \Q]=2,3$. By \cite[Theorem 1]{FLHS15} (respectively, \cite[Theorem 4]{DNS20}), every elliptic curves $E$ defined over any real quadratic field (respectively, real cubic field) are modular, hence the conclusion of Proposition~\ref{effective main result} holds for $r=5,7$.
	%\begin{proof}[Proof of Proposition~\ref{effective main result}]
	%Let $C_r$ and $V_r$ be the same constants as in Lemma~\ref{modularity result for main result1} and Theorem~\ref{auxilary result for main result1}, respectively. 
	%Now, similar to the proof of Theorem~\ref{auxilary result for main result1} and using the fact the constant $C_r$ is effectively computable, it follows that $V_r$ is effectively computable. Since the constant $V_r$ in Theorem~\ref{main result1 for (2p,2q,r)} (respectively Theorem~\ref{main result2 for (2p,2q,r)}) is the same constant as in Theorem~\ref{auxilary result for main result1} with $r \nmid a$ (respectively Theorem~\ref{auxilary result for main result1} with $r|a$)  the proof of the proposition follows. 
	%\end{proof}

	We are now in a position to complete the proof of Theorem~\ref{main result1 for (2p,2q,r)} by applying Theorem~\ref{auxilary result for main result1}. The proof of this theorem is analogous to that of \cite[Theorem 3]{FS15} and \cite[Theorem 2.2]{JS25}.
	%, and the proof of this theorem is similar to~\cite[Theorem 3.3]{KS23 Diophantine1} and~\cite[Theorem 3]{FS15}.
	\begin{proof}[Proof of Theorem~\ref{main result1 for (2p,2q,r)}]
		We will prove this theorem by contradiction. Let $r\geq 5$ be a rational prime and let $K:= \Q(\zeta_r+ \zeta_r^{-1})$. Suppose
		$(a,b,c)$ is a non-trivial primitive integer solution to the equation $x^{2p}+y^{2q}=z^r$ with $r \nmid a$ and primes $p,q>V_r$, where $V_{r}$ is the same constant as in Theorem~\ref{auxilary result for main result1}.
		%		 Then by Theorem~\ref{modularity cycl}, $V_r$ is effectively computable. 
		By Theorem~\ref{auxilary result for main result1}, there exists an elliptic curve $E^\prime/K$ having full $2$-torsion, good reduction away from $S_{K,2}$ and satisfying certain additional properties. The main idea to prove this theorem is to relate the $j$-invariant $j_{E'}$ of $E'$ in terms of the solutions $(\lambda, \mu)$ of the $S$-unit equation~\eqref{S_K-unit solution} and using the condition~\eqref{assumption for main result1} on $\lambda, \mu$ to get a contradiction.
		
		%		By \cite[Lemma-15(i)]{M22}, the elliptic curve elliptic curve $E^\prime/K$ has a model 
		Since $E^\prime/K$ has full $2$-torsion, it follows that $E^\prime/K$ has a model of the form $$E^\prime: Y^2 = (X-e_1)(X-e_2)(X-e_3),$$
		where $e_1, e_2, e_3 \in K$ are distinct and their cross ratio $\lambda= \frac{e_3-e_1}{e_2-e_1} \in \mathbb{P}^1(K)-\{0,1,\infty\}$. Then  $E^\prime$ is isomorphic (over $\overline{K}$) to an elliptic curve $E_\lambda'$ in the Legendre form:
		\begin{small}		$$E_\lambda' : y^2 = x(x - 1)(x-\lambda) \text{ for } \lambda \in \mathbb{P}^1(K^+)-\{0,1,\infty\} \text{ with}$$ 
			\begin{equation}
				\label{j'-invariant of Legendre form}
				j_{E^\prime} = j(E_\lambda')= 2^8\frac{(\lambda^2-\lambda+1)^3}{\lambda^2(1-\lambda)^2}.
			\end{equation}
		\end{small}
		Since $E^\prime$ has good reduction away from $S_{K,2}$, it follows that $v_\mfq(j_{E^\prime})\geq 0 \text{ for all } \mfq \in P_K \setminus S_{K,2}$ and hence $j_{E^\prime} \in \mcO_{S_{K,2}}$. By \eqref{j'-invariant of Legendre form}, $\lambda$ satisfies a monic polynomial of degree $6$ with coefficients in $\mcO_{S_{K,2}}$, and hence $\lambda \in \mcO_{S_{K,2}}$. But $\frac{1}{\lambda}, \mu:=1-\lambda, \frac{1}{\mu}$ are also solutions of \eqref{j'-invariant of Legendre form}, and hence $\frac{1}{\lambda}, \mu, \frac{1}{\mu} \in \mcO_{S_{K,2}}$. So
		$\lambda, \mu \in \mcO^*_{S_{K,2}}$ and therefore $(\lambda,\ \mu)$ is a solution of the $S_{K,2}$-unit equation~\eqref{S_K-unit solution}.
		Now, rewriting \eqref{j'-invariant of Legendre form} in terms of $\lambda, \ \mu$, we get
		\begin{small}
			\begin{equation}
				\label{j' in terms of lambda and mu}
				j_{E^\prime}= 2^8\frac{(1-\lambda \mu)^3}{(\lambda \mu)^2}.
			\end{equation}
		\end{small}	
		We now use the condition \eqref{assumption for main result1} on  $\lambda, \mu$ to get a contradiction. Let 
		$t:=\max \left\{|v_\mfP(\lambda)|,|v_\mfP(\mu)| \right\} \geq 0$. By \eqref{assumption for main result1}, we have $t \leq 4v_\mfP(2)$. 
		%		Let $t= \max \left( |v_\mfP(\lambda)|,|v_\mfP(\mu)| \right )$.
		If $t=0$, then $v_\mfP(\lambda)= v_\mfP(\mu)=0$. This gives $v_\mfP(j_{E^\prime})\geq 8v_\mfP(2)>0$, which contradicts Theorem~\ref{auxilary result for main result1}(4). 
		If $t>0$, then using the relation $\lambda + \mu =1$, we have either $v_\mfP(\lambda)=t$ and $ v_\mfP(\mu)=0$, or $v_\mfP(\lambda)=0$ and $v_\mfP(\mu)=t$, or $v_\mfP(\lambda)=v_\mfP(\mu)=-t$. This gives $v_\mfP(\lambda \mu)=t$ or $-2t$. In either way, we get $v_\mfP(j_{E^\prime})\geq 8v_\mfP(2)-2t \geq 0$, which again contradicts Theorem~\ref{auxilary result for main result1}(4). This completes the proof of Theorem~\ref{main result1 for (2p,2q,r)}.
	\end{proof}
	We are now ready to complete the proof of Theorem~\ref{main result2 for (2p,2q,r)} by using Theorems~\ref{auxilary result for main result1} and~\ref{main result1 for (2p,2q,r)}.
	%	 and the proof of this theorem is similar to Theorem~\ref{main result1 for (2p,2q,r)}.
	\begin{proof}[Proof of Theorem~\ref{main result2 for (2p,2q,r)}]
		Suppose
		$(a,b,c)$ is a non-trivial primitive integer solution to the equation $x^{2p}+y^{2q}=z^r$ with $r | a$ and primes $p,q>V_r$, where $V_{r}$ is the same constant as in Theorem~\ref{auxilary result for main result1}.
		%		Then by Theorem~\ref{effective main result}, $V_r$ is effectively computable. 
		By Theorem~\ref{auxilary result for main result1}, there exists an elliptic curve $E^\prime/K$ having full $2$-torsion and good reduction away from $S_{K,2r}$.
		Now, using the same arguments as in the proof of Theorem~\ref{main result1 for (2p,2q,r)}, we get 
		$$j_{E^\prime}= 2^8\frac{(1-\lambda \mu)^3}{(\lambda \mu)^2},$$
		where $(\lambda,\ \mu)$ is a solution of the $S_{K,2r}$-unit equation~\eqref{S_{K,2r}-unit solution}. Now, using the condition $\max \left\{|v_\mfP(\lambda)|,|v_\mfP(\mu)| \right\}  \leq 4v_\mfP(2)$ as in \eqref{assumption for main result2}, we get 
		$v_\mfP(j_{E^\prime}) \geq 8 v_\mfP(2)-2v_\mfP(\lambda \mu)  \geq 0$, which contradicts Theorem~\ref{auxilary result for main result1}(4). This completes the proof of Theorem~\ref{main result2 for (2p,2q,r)}.		
	\end{proof}
	%	We now prove the modularity of elliptic curves over the field $\Q(\zeta_r+ \zeta_r^{-1})$.
	%
	%	\begin{proof}[Proof of Corollary~\ref{effective cor}]
		%		
		%		We now prove that the constant $V_r$ is effectively computable. Let $C_r$ and $V_r$ be the same constants as in Lemma~\ref{modularity result for main result1} and Theorem~\ref{auxilary result for main result1}, respectively. Since every elliptic curves over $K= \Q(\zeta_r+ \zeta_r^{-1})$ are modular, we can take $C_r=5$. Now, similar to the proof of Theorem~\ref{auxilary result for main result1} and using the fact the constant $C_r=5$, it follows that $V_r$ is effectively computable.
		%	\end{proof}

	\section{Proof of Proposition~\ref{loc crit1 for main result1} and Proposition~\ref{loc crit1 for main result2}}
	\label{section for local criteria}
	%In this section, we will prove Propositions~\ref{loc crit1 for main result1},~\ref{loc crit1 for main result2}.
	\begin{proof}[Proof of Proposition~\ref{loc crit1 for main result1}:]
		Recall that $K= \Q(\zeta_r+ \zeta_r^{-1})$. By hypothesis, $2$ is inert in $K$, hence $\mfP$ is the unique prime ideal of $\mcO_K$ lying above $2$.
		For any solution $(\lambda, \mu) \in (\mcO_{S_{K,2}}^\ast)^2$ to the equation $\lambda+\mu=1$, denote $$m_{\lambda, \mu}:=	\max \left\{ |v_\mfP(\lambda)|,|v_\mfP(\mu)|  \right\}.$$ 
		To prove this proposition, we need to show that every solution $(\lambda, \mu)$ to the $S_{K,2}$-unit equation \eqref{S_K-unit solution} satisfies
		$m_{\lambda, \mu}\leq 4v_\mfP(2)=4.$
		
		We prove this by contradiction. Suppose there exists a solution $(\lambda, \mu)$ to the $S_{K,2}$-unit equation \eqref{S_K-unit solution} satisfies
		$m_{\lambda, \mu} \geq 5.$
		Then there exists a solution $(\lambda', \mu')$ to the $S_{K,2}$-unit equation \eqref{S_K-unit solution} with $\lambda', \mu' \in \mcO_K$ such that 
		$m_{\lambda', \mu'} =m_{\lambda, \mu}$. This is easy because, if $v_\mfP(\lambda) \geq 0$ (respectively, $v_\mfP(\lambda) <0$), then choose $(\lambda', \mu')=(\lambda, \mu)$ (respectively, $(\lambda', \mu')=(\frac{1}{\lambda}, \frac{-\mu}{\lambda})$). Hence $m_{\lambda', \mu'}= \max \left\{ v_\mfP(\lambda'), v_\mfP(\mu')  \right\} \geq 5$.
		Without loss of generality, take $m_{\lambda', \mu'}=v_\mfP(\lambda') \geq 5$. Since the $S_{K,2}$-unit equation \eqref{S_K-unit solution} has only finitely many solutions, we may choose that $(\lambda', \mu')$ is a solution of \eqref{S_K-unit solution} with $v_\mfP(\lambda')$ as large as possible. Since $v_\mfP(\lambda') \geq 5$ and $\lambda'+ \mu'=1$, we get $v_\mfP(\mu')=0$ and hence $\mu' \in  \mcO_K^\ast$. This gives $\mu' \equiv 1 \pmod {\mfP^5}$ and hence $\frac{\mu'-1}{2^5} \in \mcO_K$.
		
		We now show that $\mu'$ is a square in $\mcO_K$ by using the hypothesis $2 \nmid h_K^+$. Suppose $\mu'$ is not a square in $\mcO_K$. Let $\delta:=\frac{1+\sqrt{\mu'}} {2}$ and  $L:=K(\sqrt{\mu'})=K(\delta)$. Then $L$ is a quadratic extension of $K$ and the minimal polynomial of $\delta$ is given by $m_\delta(x)=x^2-x+(\frac{\mu'-1}{4}) \in \mcO_K[x]$ with discriminant $\mu' \in \mcO_K^\ast$. So $L$ is unramified at all finite places of $K$ and $[L:K]=2$, which contradicts $2 \nmid h_K^+$.
		
		Therefore, $\mu'$ is a square in $\mcO_K$. Let $\mu'= \nu^2$, for some $\nu \in \mcO_K^\ast$. Since $\lambda'= 1- \mu'$, we get $\lambda'= (1+ \nu) (1-\nu)$. Let $s_0=v_\mfP(\lambda')$, $s_1=v_\mfP(1 + \nu)$, $s_2=v_\mfP(1-\nu)$. Then $s_0=s_1+s_2 \geq 5$. Since $(1+ \nu)+ (1-\nu)=2$, we get either $s_1=1$ or $s_2=1$. Since $s_0=s_1+s_2 \geq 5$, it follows that exactly one of $s_1,\ s_2$ is equal to $1$.
		If $s_1=1$, then choose $\lambda''=\frac{-(1-\nu)^2}{4\nu}$ and $\mu''= \frac{(1+\nu)^2}{4\nu} $. If $s_2=1$, then choose $\lambda''=\frac{(1+\nu)^2}{4\nu}$ and $\mu''= \frac{-(1-\nu)^2}{4\nu}$. Then $\lambda'', \mu'' \in \mcO_{S_{K,2}}^\ast$ and $\lambda''+ \mu''=1$ with  $v_\mfP(\lambda'')=2(s_0-1)-2=2s_0-4$. Since $s_0 \geq 5$, we get $v_\mfP(\lambda'') > s_0= v_\mfP(\lambda')$, which contradicts the maximality of $ v_\mfP(\lambda')$. This completes the proof of the proposition.
	\end{proof}
	We now prove Proposition~\ref{loc crit1 for main result2} and its proof is similar to \cite[Corollary 7]{M23} and \cite[Corollary 5.7]{JS25}.
	\begin{proof}[Proof of Proposition~\ref{loc crit1 for main result2}:]
		To prove this proposition, it is enough to show that every solution $(\lambda, \mu)$ to the $S_{K,2r}$-unit equation \eqref{S_{K,2r}-unit solution} satisfies
		$	\max \left\{|v_\mfP(\lambda)|,|v_\mfP(\mu)| \right\}\leq 4v_\mfP(2)$, where  $\mfP$ is the unique of $\mcO_K$ lying above $2$. Recall that $r \not\equiv 1 \pmod 8$ and $K= \Q(\zeta_r+ \zeta_r^{-1})$. Then, using the same arguments as in \cite[Theorem 6]{M23} and \cite[Corollary 7]{M23}, we can show that every solution $(\lambda, \mu)$ to the equation \eqref{S_{K,2r}-unit solution} satisfies $\max \left\{|v_\mfP(\lambda)|,|v_\mfP(\mu)| \right\}\leq 4v_\mfP(2)$ (cf. \cite[pp. 13-14]{M23} for more details).
		Finally, by Theorem~\ref{main result2 for (2p,2q,r)}, the proof of the proposition follows.
	\end{proof}
	
	\section*{Acknowledgements} 
	The author thanks Nuno Freitas, Narasimha Kumar and Somnath Jha for various discussions on the solutions of $S$-unit equations over totally real number fields. The author thanks Alejandra Alvarado, Angelos Koutsianas and Christopher Rasmussen for their help with the computations of the solutions of the $S$-unit equation over certain explicit cyclotomic fields. The author thanks Martin Azon and Sunil Pasupulati for their assistance in the use of PARI/GP.
	%	with the computations of the narrow class number using PARI/GP.

	%\begin{thebibliography}{abc9999}

\end{document}